\let\originalleft\left
\let\originalright\right
\renewcommand{\left}{\mathopen{}\mathclose\bgroup\originalleft}
\renewcommand{\right}{\aftergroup\egroup\originalright}
\begin{document}

\newcommand\cF{\mathcal{F}}
\newcommand{\co}{o}
\newcommand\cO{O}
\newcommand{\ee}{\varepsilon}

\newcommand{\myStep}[2]{{\bf Step #1} --- #2\\}

\newtheorem{theorem}{Theorem}[section]
\newtheorem{corollary}[theorem]{Corollary}
\newtheorem{lemma}[theorem]{Lemma}
\newtheorem{proposition}[theorem]{Proposition}

\theoremstyle{definition}
\newtheorem{definition}{Definition}[section]

\title{Extended normal forms for one-dimensional border-collision bifurcations}
\author[$\dagger$]{P.A.~Glendinning}
\author[$\ddagger$]{D.J.W.~Simpson}
\affil[$\dagger$]{Department of Mathematics, University of Manchester, Manchester, UK}
\affil[$\ddagger$]{School of Mathematical and Computational Sciences, Massey University, Palmerston North, New Zealand}

\maketitle

\begin{abstract}

The border-collision normal form describes the local dynamics in continuous systems with switches when a fixed point intersects a switching surface. For one-dimensional cases where the bifurcation creates or destroys only fixed points and period-two orbits, we show that the standard local equivalence of normal forms, topological conjugacy, can be replaced by differentiable conjugacy provided an extra term is added to the normal form. In these cases topological conjugacy is so weak that a range of values can be used for the coefficients in the normal form. The extension to differentiable conjugacy explains why the usual choice of slopes in the standard normal form is privileged. This highlights the importance of differentiable conjugacies and the need for extended normal forms.

\end{abstract}

\section{Introduction}
\label{sec:intro}

Bifurcation theory is underpinned by normal forms;
these are simple evolution rules with local equivalence to general classes of dynamical systems
where equivalence usually refers to a topological conjugacy.
However, for border-collision bifurcations, which occur in systems with thresholds, switches, or contact events,
the standard normal form is a piecewise-linear map that may fail to be topologically conjugate to the original system.
Here we consider border-collision bifurcations in one dimension
that mimic saddle-node or period-doubling bifurcations,
and show that equivalence via a stronger \emph{differentiable} conjugacy can be achieved if an appropriate 
quadratic term is added to the normal form. We call the result an extended normal form,
as in our earlier work \cite{GlSi23} for classical bifurcations of smooth maps.
If the border-collision bifurcation brings no significant change to the dynamics 
then the quadratic term is not needed.
In either case, for a given map the extended normal form can be constructed by choosing parameters
so that the two maps have periodic solutions
with the same stability multipliers
and the same ratio of lower and upper derivatives at their points of non-differentiability.
The latter condition ensures the domain of a local differentiable conjugacy can be
extended through the non-differentiable point.
This work aims to initiate
a rigorous normal form theory for border-collision bifurcations.

The usual normal forms for saddle-node bifurcations
and one-dimensional border-collision bifurcations are
\begin{align}
y \mapsto \nu + y - y^2,
\label{eq:sn}
\end{align}
and
\begin{align}
y \mapsto \begin{cases} \nu + s_L y, & y \le 0, \\ \nu + s_R y, & y \ge 0, \end{cases}
\label{eq:pwl}
\end{align}
respectively.
Border-collision bifurcations allow many different dynamical transitions \cite{DiBu08,ItTa79}. 
If $0 < s_R < 1 < s_L$ 
the bifurcation mimics a saddle-node bifurcation, see Fig.~\ref{fig:pwsDiffInit}.

\begin{figure}[h!]
\begin{center}
\includegraphics[height=5.25cm]{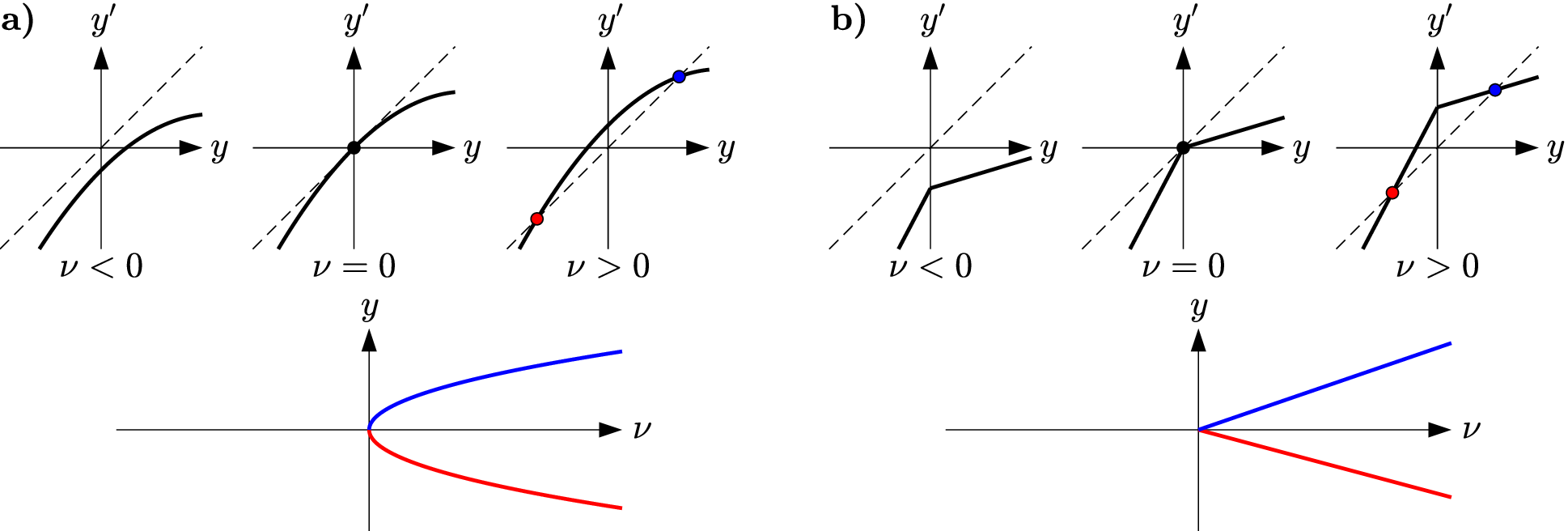}
\caption{
Bifurcation diagrams of the quadratic map \eqref{eq:sn} in panel (a)
and the piecewise-linear map \eqref{eq:pwl} in panel (b) for values of $\nu$ near $0$.
Stable [unstable] fixed points are coloured blue [red].
Each diagram is accompanied by three plots showing the map
before, at, and after the bifurcation.
\label{fig:pwsDiffInit}
} 
\end{center}
\end{figure}

The quadratic map \eqref{eq:sn} applies to smooth maps
where a saddle-node bifurcation occurs when a fixed point attains a stability multiplier of $1$
and certain genericity conditions are met \cite{GuHo83,Ku04}.
The piecewise-linear map \eqref{eq:pwl} applies to one-dimensional piecewise-smooth continuous maps
where a border-collision bifurcation occurs when a fixed point
reaches a point where the map non-differentiable \cite{NuYo95,SuAv16}.
Such bifurcations occur in mathematical models of diverse systems
including power electronics \cite{BaKa00} and economics \cite{GaSu08}. 
The maps \eqref{eq:sn} and \eqref{eq:pwl} with $0 < s_R < 1 < s_L$
are indeed normal forms in the sense of being locally topologically conjugate
to the general classes of maps under consideration.

But topological conjugacy is weak.
The map \eqref{eq:sn} is topologically conjugate to \eqref{eq:pwl} for any
values of $s_L$ and $s_R$ with $0 < s_R < 1 < s_L$,
and the parameters $\nu$ may also differ provided their sign is the same (see \S\ref{sec:topConj} for details).
Thus the precise values of $s_L$ and $s_R$ do not matter
even though in applications
they are usually chosen to match those of the original system on the switching surface.   
Normal forms are more informative when the conjugacy is a diffeomorphism
because in this case it preserves the stability multipliers of periodic solutions, for instance.

The quadratic map \eqref{eq:sn} usually does not provide a differentiable conjugacy.
However, if we add a certain parameter-dependent cubic term to
\eqref{eq:sn} the resulting map does provide a differentiable conjugacy \cite{GlSi23}.
The purpose of this paper is to obtain similar results for border-collision bifurcations of one-dimensional maps
\begin{equation}
f(x;\mu) = \begin{cases} f_L(x;\mu), & x \le 0, \\ f_R(x;\mu), & x \ge 0, \end{cases}
\label{eq:f}
\end{equation}
where $f_L$ are $f_R$ are smooth.
There are several cases to consider.
If a border-collision bifurcation of \eqref{eq:f} causes a fixed point to cross
from one side of the non-differentiable point $x=0$ to the other
and no other invariant sets are involved, then \eqref{eq:pwl} can be used to obtain a differentiable conjugacy.
If instead the bifurcation mimics a saddle-node bifurcation or a period-doubling bifurcation,
then \eqref{eq:pwl} is insufficient.
For this reason we introduce the {\em extended normal form}
\begin{equation}
g(y;\nu,s_L,s_R,t) = \begin{cases} \nu + s_L y + t y^2, & y \le 0, \\ \nu + s_R y, & y \ge 0, \end{cases}
\label{eq:pwq}
\end{equation}
where the $y^2$-term provides us with enough degrees of freedom
to accommodate the latter cases.
Our results do not cover scenarios with
$s_L > 0$ and $s_R < -1$ (or $s_L < -1$ and $s_R > 0$)
for which the border-collision bifurcation can create higher period solutions and chaotic sets.

Our proofs use a similar methodology to \cite{GlSi23} for the smooth setting.
They are achieved by first showing that the parameters of the extended normal form can be chosen
so that its invariant sets have the same stability multipliers as the corresponding invariant sets of the general map.
This equality needs to hold on intervals of parameter values, and we use the implicit function theorem to achieve this.
The novelty of the piecewise-smooth setting is that the two maps must also have the same ratio
of lower and upper derivatives at $x=0$ and $y=0$:
\begin{equation}
\frac{s_L}{s_R} = \frac{\frac{\partial f_L}{\partial x}(0,\mu)}{\frac{\partial f_R}{\partial x}(0,\mu)}.
\label{eq:slopeRatiosMatch}
\end{equation}
To construct differentiable conjugacies on intervals containing $x=0$,
we begin with a local conjugacy that exists by a linearisation theorem of Sternberg \cite{St57b},
then extend its domain outwards using images and preimages of the maps as done by Belitskii \cite{Be86}.
The key observation is that \eqref{eq:slopeRatiosMatch} ensures the conjugacy (and its inverse)
remains differentiable as it is extended through $x=0$.
The necessity of this condition has also been reported for circle maps \cite{KhKh03,KhTe13}.

The remainder of the paper is organised as follows.
We begin in \S\ref{sec:mainResults} by formally stating our main results as three theorems.
Then in \S\ref{sec:topConj} we consider topological conjugacies
and provide a precise result that explains why \eqref{eq:sn} and \eqref{eq:pwl} with $0 < s_R < 1 < s_L$
are topologically conjugate to each other. 
In \S\ref{sec:diffConj} we construct differentiable conjugacies between smooth maps,
then in \S\ref{sec:construction} use these results to construct differentiable conjugacies between piecewise-smooth maps.
This is a large task: ten separate constructions are required to accommodate all scenarios needed to prove the theorems,
which is then done in \S\ref{sec:matching}.
In \S\ref{sec:nd} we speculate on extensions to higher dimensional maps,
and in \S\ref{sec:conc} provide a final discussion.
Throughout the paper $\cO$ and $\co$ represent big-O and little-o notation:
a function of $u \in \mathbb{R}$ is $\cO \left( u^k \right)$ if it only contains terms that are order $k$ or higher,
and is $\co \left( u^k \right)$ if it only contains terms that are higher than order $k$.

\section{Main results}
\label{sec:mainResults}

We first clarify the notion of topological and differentiable conjugacies for one-dimensional maps.

\begin{definition}
Let $\Omega, \Psi \subset \mathbb{R}$ be open intervals
and let $f : \Omega \to \mathbb{R}$ and $g : \Psi \to \mathbb{R}$ be continuous maps.
Then $f$ on $U \subset \Omega$ and $g$ on $V \subset \Psi$ are {\em topologically conjugate}
if there exists a homeomorphism $h : U \to V$ such that
\begin{equation}
h(f(x)) = g(h(x)),
\label{eq:conjugacyRelation}
\end{equation}
for all $x \in U$ for which $f(x) \in U$.
If $h$ is also a diffeomorphism then $f$ on $U$ and $g$ on $V$ are {\em differentiably conjugate}.
\label{df:conj}
\end{definition}

Conjugacies convert orbits of one map to orbits of the other map.
For instance in Definition \ref{df:conj} if $x^* \in U$ is a fixed point of $f$,
then \eqref{eq:conjugacyRelation} implies $y^* = h(x^*) \in V$ is a fixed point of $g$.
Furthermore, if $f$ is differentiable at $x^*$, $g$ is differentiable at $y^*$, and $h$ is a diffeomorphism,
then \eqref{eq:conjugacyRelation} implies $\frac{d f}{d x}(x^*) = \frac{d g}{d y}(y^*)$,
i.e.~the fixed points have the same stability multiplier.

Now consider a piecewise-smooth map $f$ of the form \eqref{eq:f}.
We suppose $f$ is defined for all $(x;\mu) \in \Omega \times I$,
where $\Omega$ and $I$ are open intervals containing $0$,
and that $f_L$ and $f_R$
can be smoothly extended to a neighbourhood of $x=0$ (so the derivatives of $f_L$ and $f_R$ are well-defined at $x=0$).
We assume $f$ is continuous at $x=0$, that is
\begin{equation}
f_L(0;\mu) = f_R(0;\mu), \qquad \text{for all $\mu \in I$}.
\label{eq:continuity}
\end{equation}
In order for $\mu = 0$ to correspond to a border-collision bifurcation,
we suppose $x=0$ is a fixed point of $f$ when $\mu = 0$, i.e.
\begin{equation}
f_L(0;0) = f_R(0;0) = 0.
\label{eq:bcbAssumption}
\end{equation}
Now let
\begin{align}
a_L &= \frac{\partial f_L}{\partial x}(0;0), &
a_R &= \frac{\partial f_R}{\partial x}(0;0),
\label{eq:aLaR}
\end{align}
be the left and right slopes of the map at the bifurcation point.
In order for $\mu$ to unfold the bifurcation in a generic fashion, we require
\begin{equation}
\beta = \frac{\partial f_L}{\partial \mu}(0;0) = \frac{\partial f_R}{\partial \mu}(0;0)
\label{eq:beta}
\end{equation}
to be non-zero.
The results below assume $\beta > 0$ to fix the direction of the bifurcation.

The bifurcation behaves very differently for different values of $a_L$ and $a_R$.
The three theorems below correspond to the coloured regions in Fig.~\ref{fig:pwsDiffBifSet} numbered $1$, $2$, and $3$.
The same results apply to the unnumbered coloured regions
but with stability reversed and/or a different direction for the bifurcation.
The white regions include parameter values where a chaotic attractor
is created at the border-collision bifurcation
and are not covered by the theorems.
Proofs of the theorems are provided in \S\ref{sec:matching}.

The first theorem applies to the six regions numbered $1$.
For these the border-collision bifurcation is trivial
in the sense that as the value of $\mu$ is varied through $0$,
a fixed point crosses $x=0$ and no other invariant sets are involved.
The theorem shows that a differentiable conjugacy to the extended normal form \eqref{eq:pwq} can be achieved using $t = 0$
with which \eqref{eq:pwq} reverts to the piecewise-linear form \eqref{eq:pwl}.
See Fig.~\ref{fig:pwsDiffConjBifDiag_a} for a typical bifurcation diagram.
Here and throughout the paper we label fixed points and other important values on the $45^\circ$-degree line (dashed).

\begin{figure}[h!]
\begin{center}
\includegraphics[height=8cm]{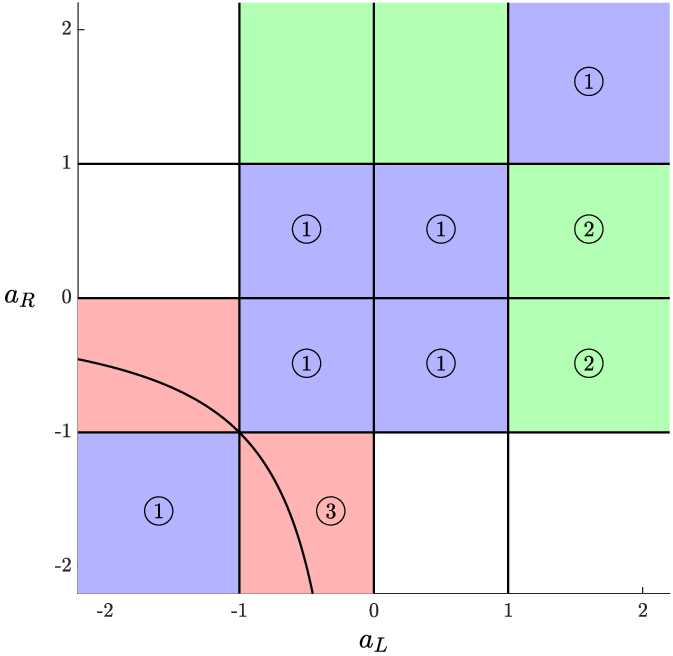}
\caption{
The space of parameter points $(a_L,a_R)$ for border-collision bifurcations of a map \eqref{eq:f},
where $a_L$ and $a_R$ are the left and right slopes \eqref{eq:aLaR}.
Theorem \ref{th:noBif} applies to the regions numbered 1,
Theorem \ref{th:snLike} applies to the regions numbered 2,
and Theorem \ref{th:pdLike} applies to the region numbered 3.
The unnumbered coloured regions can be accommodated by symmetry.
The curved boundary is where $a_L a_R = 1$ in the third quadrant.
\label{fig:pwsDiffBifSet}
} 
\end{center}
\end{figure}

\begin{figure}[h!]
\begin{center}
\includegraphics[height=7cm]{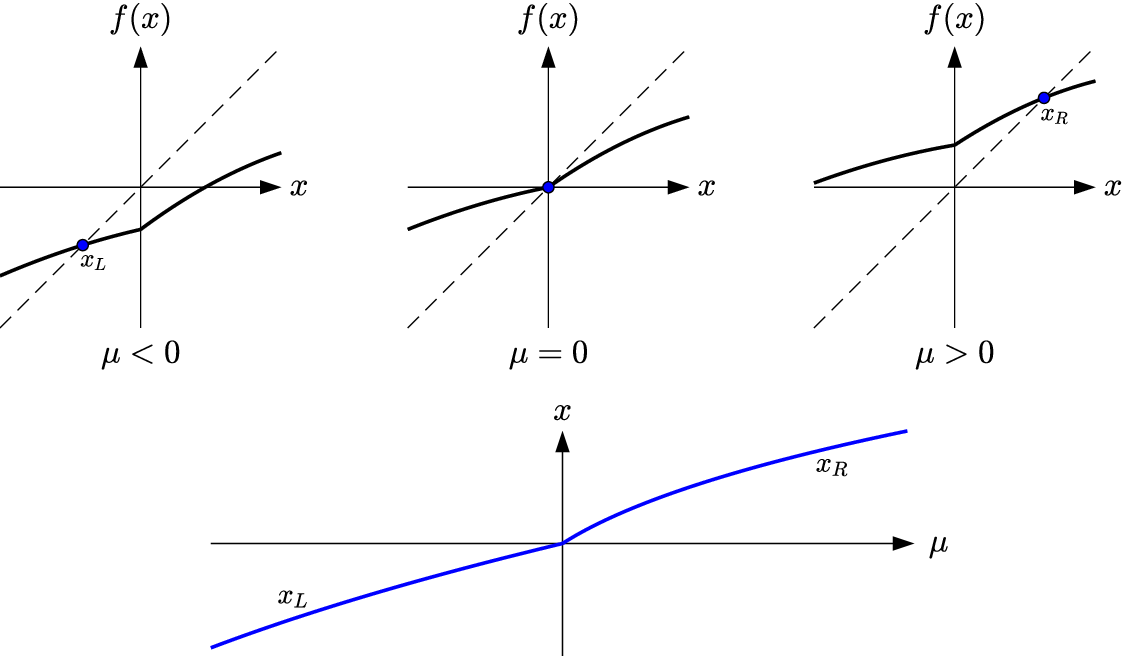}
\caption{
A bifurcation diagram and representative cobweb diagrams
of a piecewise-smooth map \eqref{eq:f} satisfying the conditions of Theorem \ref{th:noBif}
with $0 < a_L < 1$ and $0 < a_R < 1$.
For small $\mu < 0$ the map has a stable fixed point $x_L < 0$,
while for small $\mu > 0$ it has a stable fixed point $x_R > 0$.
\label{fig:pwsDiffConjBifDiag_a}
} 
\end{center}
\end{figure}

\begin{theorem}[Trivial border-collision bifurcations]
Suppose $f$ of the form \eqref{eq:f} is piecewise-$C^2$ and satisfies \eqref{eq:continuity}, \eqref{eq:bcbAssumption},
$\beta > 0$, $(a_L-1)(a_R-1) > 0$, $(a_L+1)(a_R+1) > 0$, $a_L \ne 0$, and $a_R \ne 0$.
Let $\nu(\mu) = f(0;\mu)$.
Then there exist $\mu_0 > 0$,
continuous functions $s_L(\mu) = a_L + \cO(\mu)$ and $s_R(\mu) = a_R + \cO(\mu)$,
and neighbourhoods $N$ and $M(\mu)$ of $0$,
such that $f(x;\mu)$ on $N$ and $\tilde{g}(y;\mu) = g(y;\nu(\mu),s_L(\mu),s_R(\mu),0)$ on $M(\mu)$ are differentiably conjugate
for all $\mu \in (-\mu_0,\mu_0)$.
Moreover, for any $\delta > 0$
we can choose $\mu_0 > 0$, $N = (-p,p)$, and $M(\mu) = (q^-(\mu),q^+(\mu))$ so that
\begin{equation}
(1-\delta) p < \left| q^\pm(\mu) \right| < (1+\delta) p,
\label{eq:theBound}
\end{equation}
for all $\mu \in (-\mu_0,\mu_0)$.
\label{th:noBif}
\end{theorem}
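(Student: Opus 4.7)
The plan is to reduce the theorem to a matching problem: choose $s_L(\mu), s_R(\mu)$ so that the extended normal form with $t=0$ carries the same dynamically meaningful data as $f$ near $x=0$, then invoke the conjugacy constructions of \S\ref{sec:diffConj}--\ref{sec:construction} to assemble the differentiable conjugacy. The two matching requirements are that the unique active fixed point of $\tilde g$ share its stability multiplier with the corresponding fixed point of $f$, and that the slope-ratio condition \eqref{eq:slopeRatiosMatch} hold at the kink.

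First, since the hypotheses imply $a_L, a_R \neq 1$, the implicit function theorem gives smooth branches $x_L(\mu), x_R(\mu)$ of the equations $f_L(x;\mu)=x$ and $f_R(x;\mu)=x$, both passing through $0$ at $\mu=0$; the sign hypotheses ensure that exactly one branch lies on its own side for each nonzero $\mu$, giving the unique fixed point of $f$. I would then set, for $\mu \le 0$, $s_L(\mu)=\frac{\partial f_L}{\partial x}(x_L(\mu);\mu)$ and define $s_R(\mu)$ from \eqref{eq:slopeRatiosMatch}, and symmetrically for $\mu \ge 0$. Both recipes agree at $\mu=0$ with values $a_L, a_R$, so $s_L, s_R$ are continuous in $\mu$ with the stated $\cO(\mu)$ expansions.

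For the conjugacy itself, on the side containing the active hyperbolic fixed point the $C^1$-linearisation result of \S\ref{sec:diffConj} (via Sternberg) gives a local differentiable conjugacy $h$ between $f$ and its linearisation; since $\tilde g$ is affine with the same slope on that side, composing with an affine shift yields a local $C^1$ conjugacy between $f$ and $\tilde g$. I then extend $h$ outwards, and in particular across $x=0$, by alternately applying $h(x) = \tilde g^{-1}(h(f(x)))$ and $h(x) = \tilde g(h(f^{-1}(x)))$, as in \S\ref{sec:construction}. A direct computation of the one-sided derivatives of the extension at $0$ reduces $h'(0^-)=h'(0^+)$ precisely to the identity \eqref{eq:slopeRatiosMatch}, which is satisfied by construction, and nonvanishing of $h'$ follows from $s_L, s_R \neq 0$. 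The bound \eqref{eq:theBound} is a continuity statement: as $\mu \to 0$ the conjugacy tends to the identity on $[-p,p]$, so $q^\pm(\mu) = \pm p + \co(1)$, and $\mu_0$ may be shrunk to meet any prescribed $\delta$.

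The main obstacle is the last extension step: showing that propagating $h$ through the non-smooth point genuinely produces a $C^1$ diffeomorphism rather than merely a homeomorphism. This is where the piecewise-smooth setting departs from the classical one, and where the slope-ratio condition \eqref{eq:slopeRatiosMatch} earns its place as an a priori constraint on $s_L, s_R$. Away from the kink the extension is routine, since both maps are smooth and hyperbolic on each side, and the iterative propagation of $h$ inherits regularity from $f, \tilde g$ directly.
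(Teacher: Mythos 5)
Your overall route is the paper's: match the stability multiplier of the visible fixed point and impose the slope-ratio condition \eqref{eq:slopeRatiosMatch} to fix $s_L(\mu),s_R(\mu)$ (this is exactly \eqref{eq:sLproof1}--\eqref{eq:sRproof1}), then linearise at the hyperbolic fixed point and extend outwards through $x=0$. However, two steps are missing, and the first breaks the construction as written. When you extend the left conjugacy across $0$ by $h(x)=g_R^{-1}(h(f_R(x)))$, you need the conjugacy produced by the linearisation to satisfy $h(0)=0$, i.e.\ to send the kink of $f$ to the kink of $\tilde g$. This is not automatic: Sternberg plus an affine change only pins down $h(x_L)=y_L$. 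If $h(0^-)=w\neq 0$, then the left conjugacy gives $h(f_L(0))=g_L(w)$ while continuity of the proposed extension at $0$ requires $h(f(0))=g_R(w)$; hence $g_L(w)=g_R(w)$, impossible for $w\neq0$ when $s_L\neq s_R$, so the extension is not even continuous, let alone $C^1$, and your one-sided derivative computation (which also implicitly evaluates $g_L',g_R'$ at $h(0)=0$) never gets off the ground. The paper devotes \S\ref{sub:matchingPoints} (Lemmas \ref{le:matchTwoPointsInc} and \ref{le:matchTwoPointsDec}) to exploiting the remaining scaling freedom in the linearising coordinates precisely to enforce $h(0)=0$; some such normalisation must appear in your argument. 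Relatedly, at $\mu=0$ the fixed point sits at the kink, where $f$ is not $C^1$, so Sternberg cannot be applied to $f$ there; this case (which the theorem includes) needs the separate two-sided gluing with $h'(0)=1$ of Lemma \ref{le:muEqualsZero}. Finally, the non-monotone and decreasing sub-cases allowed by $(a_L-1)(a_R-1)>0$, $(a_L+1)(a_R+1)>0$ require the variant extension schemes of Lemmas \ref{le:constructOneDec}--\ref{le:constructOneNonMonDec}, not a single alternating formula.

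The bound \eqref{eq:theBound} is also not a soft continuity statement. For fixed $p$ the conjugacy does not tend to the identity as $\mu\to0$: $\tilde g$ matches $f$ only to first order, so even at $\mu=0$ the conjugacy is the identity only in the sense $h'(0)=1$, and on $(-p,p)$ its derivative deviates from $1$ by an amount controlled by the second derivatives of $f$, of size proportional to $p$. Consequently shrinking $\mu_0$ alone cannot meet an arbitrary $\delta$; one must shrink $p$ as well, and must quantitatively control the images of the endpoints $\pm p$, including the endpoint whose orbit only enters the linearisation region after finitely many iterates through the other piece of the map. This is exactly what Lemma \ref{le:bound} together with Lemma \ref{le:fn} provides, giving $\bigl|\frac{h(x)-y^*}{x-x^*}-\chi\bigr|$ small with $\chi=1+O(\mu)$, followed by the iterate-tracking estimates of Step 4 of the paper's proof for the far endpoint. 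An argument of this quantitative kind is required; the one-sentence appeal to ``the conjugacy tends to the identity'' is a genuine gap.
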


Notice Theorem \ref{th:noBif} (also Theorems \ref{th:snLike} and \ref{th:pdLike}) fixes $\nu(\mu) = f(0;\mu)$.
We could have instead taken $\nu(\mu)$ to be any increasing function of $\mu$ with $\nu(0) = 0$,
and the conclusions of the theorem would still hold.
This is because $f$ is asymptotically piecewise-linear so
the magnitude of $\mu$ primarily affects the spatial scale of the dynamics, not its qualitative properties.
We chose $\nu(\mu) = f(0;\mu)$ for simplicity, concreteness,
and so that the dynamics of $f$ and $g$ occurs on the roughly the same scale for all $\mu \in (-\mu_0,\mu_0)$.
Indeed this is formalised by the bound \eqref{eq:theBound}.
Here the neighbourhood $N$, for the map $f$, is taken for simplicity to be the symmetric interval $(-p,p)$.
The corresponding neighbourhood $M$, for the map $g$, varies with $\mu$,
but \eqref{eq:theBound} shows that the amount by which it varies from $N$ (in ratio)
can be made arbitrarily small by considering sufficiently small values of $\mu$.
This reflects the fact that $\tilde{g}$ has been designed to match $f$ to first-order,
hence the conjugacy function $h(x;\mu)$ can be made near-identity,
so $M(\mu) = h(N;\mu)$ is a small perturbation of $N$.
This result is also present in the next two theorems.

Now we address the two regions in Fig.~\ref{fig:pwsDiffBifSet} numbered 2
(the other green regions can be accommodated via the substitution $x \mapsto -x$).
Fig.~\ref{fig:pwsDiffConjBifDiag_b} shows a typical bifurcation diagram.
Locally $f$ and $g$ each have no fixed points for $\mu < 0$ and two fixed points for $\mu > 0$.
We denote these fixed points $x_L < 0$ and $x_R > 0$ for the map $f$, and $y_L < 0$ and $y_R > 0$ for the map $g$.

\begin{figure}[h!]
\begin{center}
\includegraphics[height=7cm]{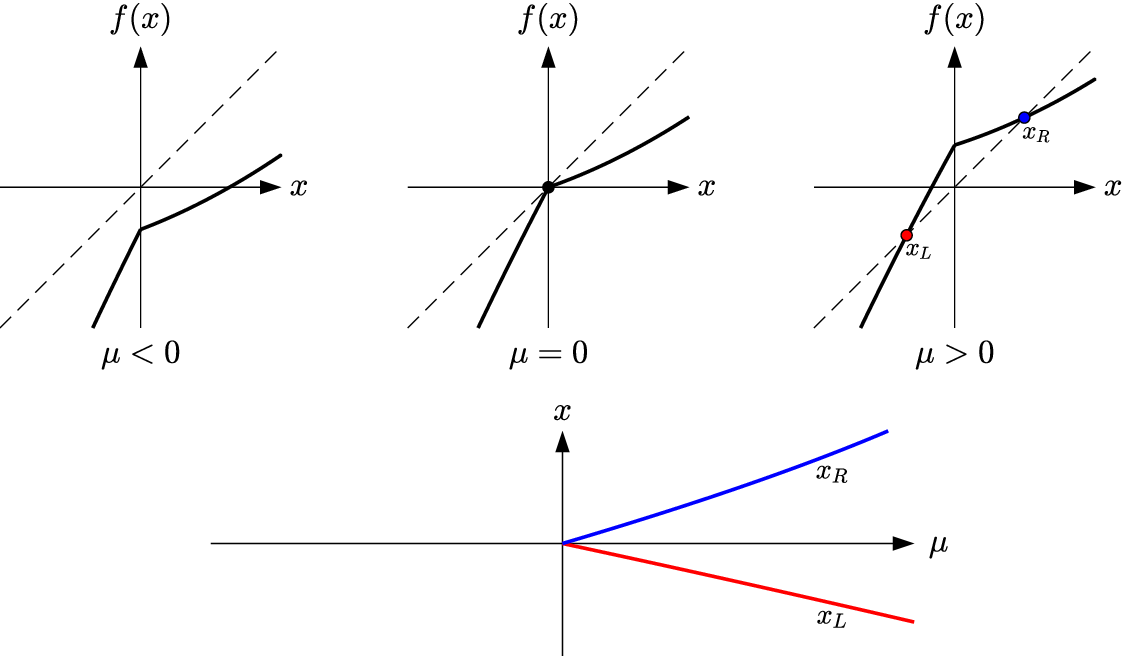}
\caption{
A bifurcation diagram and representative cobweb diagrams
of a piecewise-smooth map \eqref{eq:f} satisfying the conditions of Theorem \ref{th:snLike}
in the invertible case $a_R > 0$.
For small $\mu > 0$ the map has a stable fixed point $x_R > 0$
and an unstable fixed point $x_L < 0$.
\label{fig:pwsDiffConjBifDiag_b}
} 
\end{center}
\end{figure}

\begin{theorem}[Saddle-node-like border-collision bifurcations]
Suppose $f$ of the form \eqref{eq:f} is piecewise-$C^3$ and satisfies \eqref{eq:continuity}, \eqref{eq:bcbAssumption},
$\beta > 0$, $a_L > 1$, and $0 < |a_R| < 1$.
Let $\nu(\mu) = f(0;\mu)$.
Then there exist $\mu_0 > 0$,
continuous functions $s_L(\mu) = a_L + \cO(\mu)$, $s_R(\mu) = a_R + \cO(\mu)$, and $t(\mu)$,
and neighbourhoods $N$ and $M(\mu)$ of $0$,
such for $\tilde{g}(y;\mu) = g(y;\nu(\mu),s_L(\mu),s_R(\mu),t(\mu))$
\begin{enumerate}
\item
if $\mu \in (-\mu_0,0]$ then $f$ on $N$ and $\tilde{g}$ on $M(\mu)$ are differentiably conjugate,
\item
if $\mu \in (0,\mu_0)$ and $a_R > 0$
then $f$ on $N$ and $\tilde{g}$ on $M(\mu)$ are differentiably conjugate
on the basins of attraction of their stable fixed points
and on the basins of repulsion of the their unstable fixed points,
\item
if $\mu \in (0,\mu_0)$ and $a_R < 0$
then $f$ on $\left( x_L, f_R^{-1}(x_L) \right)$
and $\tilde{g}$ on $\left( y_L, g_R^{-1}(y_L) \right)$ are differentiably conjugate,
and $f$ on $\left\{ x \in N \,|\, x < 0 \right\}$
and $\tilde{g}$ on $\left\{ y \in M(\mu) \,|\, y < 0 \right\}$ are differentiably conjugate.
\end{enumerate}
Moreover, for any $\delta > 0$
we can choose $\mu_0 > 0$, $N = (-p,p)$, and $M(\mu) = (q^-(\mu),q^+(\mu))$ so that
\eqref{eq:theBound} holds for all $\mu \in (-\mu_0,\mu_0)$.
\label{th:snLike}
\end{theorem}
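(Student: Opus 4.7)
The plan is to follow the pattern established for Theorem \ref{th:noBif}: first select $s_L(\mu)$, $s_R(\mu)$, and $t(\mu)$ so that $f$ and $\tilde{g}$ share the relevant dynamical invariants, then invoke the constructions of \S\ref{sec:construction} to assemble differentiable conjugacies on the required domains.

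For parameter selection I would work first on $\mu > 0$. Since $a_L > 1$ and $0 < |a_R| < 1$, the fixed points $x_L(\mu) < 0 < x_R(\mu)$ and their counterparts $y_L(\mu) < 0 < y_R(\mu)$ depend smoothly on $\mu$ by the implicit function theorem. I would impose three matching conditions: the slope-ratio equality \eqref{eq:slopeRatiosMatch} at $x=y=0$; the right-fixed-point match $s_R = \frac{\partial f_R}{\partial x}(x_R(\mu);\mu)$; and the left-fixed-point match $s_L + 2 t\, y_L(\mu) = \frac{\partial f_L}{\partial x}(x_L(\mu);\mu)$. For $\mu > 0$ these can be solved sequentially — first $s_R$, then $s_L$, and finally $t$, using that $y_L(\mu) \ne 0$ — and a direct expansion as $\mu \to 0^+$ shows that $(s_L,s_R,t)$ admits a finite limit $(a_L, a_R, t_0)$. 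I would then extend the three functions continuously to $\mu \le 0$ by preserving \eqref{eq:slopeRatiosMatch} and choosing any smooth extension for $t$, which is consistent with the absence of fixed points in that regime.

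For case (i) with $\mu \in (-\mu_0, 0]$ no fixed point lies near $0$, so every orbit crosses a neighbourhood of the origin in finitely many steps; the conjugacy would be obtained by prescribing an arbitrary diffeomorphism on a fundamental domain and propagating it forward and backward by the dynamics, using the smooth construction tool of \S\ref{sec:construction}. For case (ii) with $\mu \in (0,\mu_0)$ and $a_R > 0$, both fixed points are hyperbolic and the maps are invertible near $0$; Sternberg's theorem supplies local differentiable conjugacies at each fixed point, which I then extend by Belitskii's method, pushing forward from $x_R$ to fill the basin of attraction and pulling backward from $x_L$ to fill the basin of repulsion. Case (iii) with $a_R < 0$ is more delicate because $f_R$ reverses orientation and $f$ is non-invertible: the backward extension from $x_R$ can be propagated only until a preimage of $x_L$ is encountered, yielding precisely the interval $(x_L, f_R^{-1}(x_L))$ of the statement, and on $\{x \in N : x < 0\}$ one obtains a second conjugacy by extending the Sternberg linearisation at $x_L$ outward through $x=0$.

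Finally, \eqref{eq:theBound} would follow as in Theorem \ref{th:noBif}: since $t(\mu)$ stays bounded and $s_L(\mu) \to a_L$, $s_R(\mu) \to a_R$ as $\mu \to 0$, the map $\tilde{g}$ agrees with $f$ to first order at the switching point, so the conjugacy $h$ can be chosen near-identity and $M(\mu) = h(N;\mu)$ is arbitrarily close to $N$ in ratio. The main obstacle I expect is the gluing of one-sided conjugacies across $x = 0$ in cases (ii) and (iii): once each side is pinned down by its Sternberg linearisation at the hyperbolic fixed point and propagated by Belitskii's method to the switching point, the one-sided derivatives $h'(0^\pm)$ are not free, and condition \eqref{eq:slopeRatiosMatch} is exactly what must be imposed on the parameters so that these derivatives line up and the glued conjugacy is $C^1$ through the non-smooth point.
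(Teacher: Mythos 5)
Your overall outline (match the multiplier of $x_R$ through $s_R$, match the multiplier of $x_L$ through $t$ with a $0/0$ limit as $\mu\to 0^+$, enforce the slope-ratio condition, then invoke the constructions of \S\ref{sec:construction} and bound $M(\mu)$ against $N$) is the same strategy the paper uses, and the parameter-selection step is essentially the paper's Step 1 in different clothing (the paper resolves the $0/0$ structure with the quotient $U(\mu,t)$ of \eqref{eq:Ksn} and the implicit function theorem, which also needs $y_L$'s weak dependence on $t$; your sequential solve is fine in spirit). However, there are two genuine gaps. First, your case (i) argument is wrong at $\mu=0$: you assert that for $\mu\in(-\mu_0,0]$ no fixed point lies near the origin, but $\mu=0$ is precisely the border-collision value, where $x=0$ \emph{is} a fixed point sitting at the kink ($a_L>1$ on one side, $|a_R|<1$ on the other). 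The fundamental-domain/propagation construction fails there because orbits do not cross a neighbourhood of $0$ in finitely many steps; a separate construction is needed (the paper's Lemma \ref{le:muEqualsZero}), and it requires matching \emph{both} one-sided slopes at $0$, not merely their ratio — your choice of $s_L,s_R$ does supply this, but the construction itself is missing from your proposal.

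Second, your description of how the conjugacy crosses $x=0$ in cases (ii)--(iii) is structurally off. You propose pinning each side of $0$ by its own Sternberg linearisation (left side at $x_L$, right side at $x_R$) and then gluing at the switching point, with \eqref{eq:slopeRatiosMatch} reconciling $h'(0^-)$ and $h'(0^+)$. But each of the required conjugacies lives on a basin containing exactly one fixed point in its interior: once $h$ is anchored at that fixed point and normalised so that $h(0)=0$ (this normalisation is exactly what Lemmas \ref{le:matchTwoPointsInc} and \ref{le:matchTwoPointsDec} provide), its values on the other side of $0$ are \emph{forced} by the functional equation, e.g.\ $h(x)=g_L^{-1}(h(f_L(x)))$, leaving no second anchor; differentiability at $0$ then follows from the chain rule together with \eqref{eq:equalSlopeRatios}, as in Lemmas \ref{le:constructTwoMon} and \ref{le:constructTwoNonMon}. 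Conversely, if you really did pin both sides independently at $x_L$ and at $x_R$, the two germs would generically violate the conjugacy equation itself (in values, not only in the derivative at $0$), and the slope-ratio condition cannot repair this — equal multipliers at $x_L$ (arranged via $t(\mu)$) are needed because $x_L$ lies in the interior of the basin of repulsion, not to make two independent one-sided conjugacies fit together. A smaller technical omission: the quadratic term makes $\tilde g_L$ non-monotone far from the origin, so the global hypotheses of the construction lemmas (monotonicity and \eqref{eq:infty} on all of $\mathbb{R}$, needed to take arbitrarily many preimages) fail for $\tilde g$ itself; the paper works with a modification $\tilde g^*$ agreeing with $\tilde g$ on $y\ge -2p$ and then uses the bound \eqref{eq:theBound} to check the conjugacy never leaves that region.
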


For small $\mu > 0$, Theorem \ref{th:snLike} gives differentiable conjugacies on subsets of $N$,
instead of the entirety of this neighbourhood,
because the domain of a differentiable conjugacy cannot in general be extended through a fixed point.
To make plain what these subsets are, suppose $N = (-p,p)$,
where $p > 0$ (we can always take $N$ to be symmetric about $0$).
We always assume $\mu > 0$ is small enough that $x_L, x_R \in (-p,p)$.
If $0 < a_R < 1$, as in Fig.~\ref{fig:pwsDiffConjBifDiag_b}, then $f$ is locally invertible and
the subsets of $N$ described in (ii)
are the intervals $(x_L,p)$ (the basin of attraction of $x_R$)
and $(-p,x_R)$ (the basin of repulsion of $x_L$).
If $-1 < a_R < 0$, as in Fig.~\ref{fig:pwsDiffConjAlt}, then $f$ is non-invertible and
the subsets of $N$ described in (iii)
are the intervals $\left( x_L, f_R^{-1}(x_L) \right)$ and $(-p,0)$.

\begin{figure}[h!]
\begin{center}
\includegraphics[height=4.375cm]{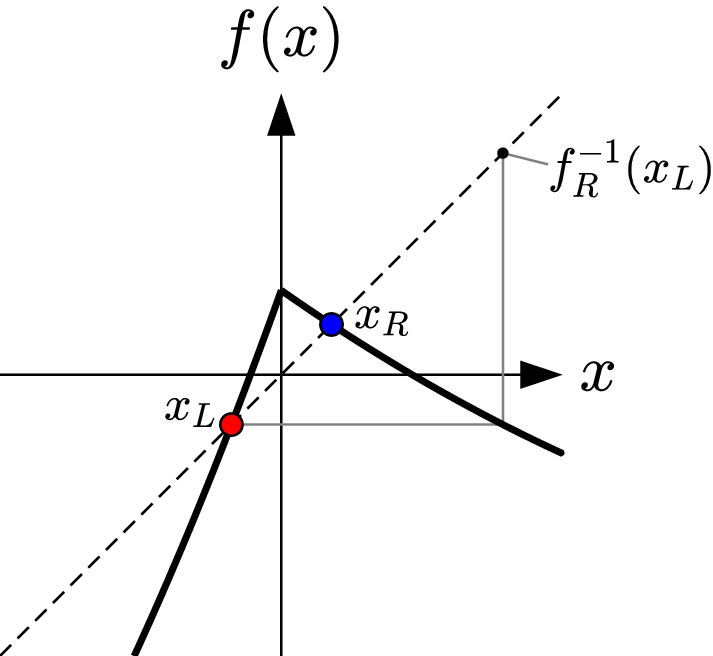} 
\caption{
A cobweb diagram of a piecewise-smooth map \eqref{eq:f} satisfying the conditions of Theorem \ref{th:snLike}
with small $\mu > 0$ in the non-invertible case $a_R < 0$.
\label{fig:pwsDiffConjAlt}
} 
\end{center}
\end{figure}

In the red regions of Fig.~\ref{fig:pwsDiffBifSet} the map $f$ is locally invertible
and the border-collision bifurcation generates a period-two solution.
Of these, Theorem \ref{th:pdLike} applies to the region numbered $3$
(the other regions can be accommodated via $x \mapsto -x$ and considering $f^{-1}$ instead of $f$).
Fig.~\ref{fig:pwsDiffConjBifDiag_c} shows a typical bifurcation diagram.

\begin{figure}[h!]
\begin{center}
\includegraphics[height=7cm]{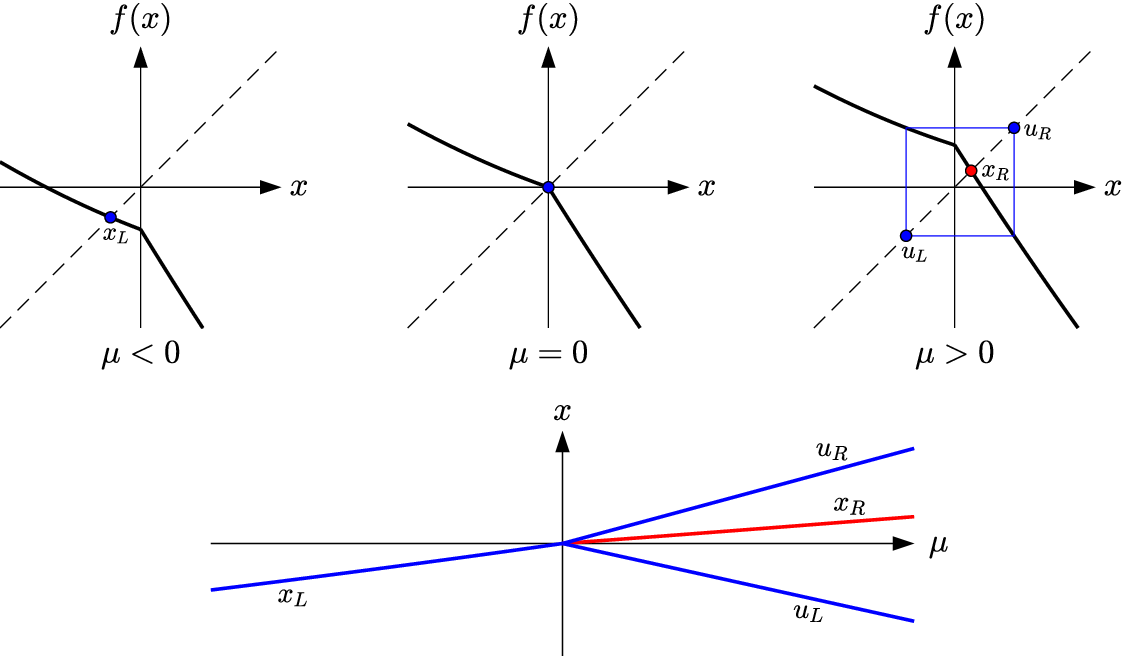}
\caption{
A bifurcation diagram and representative cobweb diagrams
of a piecewise-smooth map \eqref{eq:f} satisfying the conditions of Theorem \ref{th:pdLike}.
For small $\mu < 0$ the map has a stable fixed point $x_L < 0$,
while for small $\mu > 0$ it has an unstable fixed point $x_R > 0$
and a stable period-$2$ solution $\{ u_L, u_R \}$.
\label{fig:pwsDiffConjBifDiag_c}
} 
\end{center}
\end{figure}

\begin{theorem}[Period-doubling-like border-collision bifurcations]
Suppose $f$ of the form \eqref{eq:f} is piecewise-$C^3$ and satisfies \eqref{eq:continuity}, \eqref{eq:bcbAssumption},
$\beta > 0$, $a_R < -1$, and $\frac{1}{a_R} < a_L < 0$.
Let $\nu(\mu) = f(0;\mu)$.
Then there exist $\mu_0 > 0$,
continuous functions $s_L(\mu) = a_L + \cO(\mu)$, $s_R(\mu) = a_R + \cO(\mu)$, and $t(\mu)$,
and neighbourhoods $N$ and $M(\mu)$ of $0$,
such for $\tilde{g}(y;\mu) = g(y;\nu(\mu),s_L(\mu),s_R(\mu),t(\mu))$
\begin{enumerate}
\item
if $\mu \in (-\mu_0,0]$ then $f$ on $N$ and $\tilde{g}$ on $M(\mu)$ are differentiably conjugate,
\item
if $\mu \in (0,\mu_0)$ then
$f$ on $N$ and $\tilde{g}$ on $M(\mu)$ are differentiably conjugate on the basins of repulsion of their fixed points,
and on the basins of attraction of their period-two solutions.
\end{enumerate}
Moreover, for any $\delta > 0$
we can choose $\mu_0 > 0$, $N = (-p,p)$, and $M(\mu) = (q^-(\mu),q^+(\mu))$ so that
\eqref{eq:theBound} holds for all $\mu \in (-\mu_0,\mu_0)$.
\label{th:pdLike}
\end{theorem}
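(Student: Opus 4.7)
The plan is to match the extended normal form $\tilde g$ to $f$ in two kinds of invariants---the slope-ratio condition \eqref{eq:slopeRatiosMatch} at the non-differentiable point, and the stability multipliers of the hyperbolic invariant sets produced by the bifurcation---and then invoke the differentiable-conjugacy construction developed in \S\ref{sec:construction}. Under the hypotheses $a_R < -1$ and $1/a_R < a_L < 0$, the map $f$ is monotone decreasing and locally invertible, so the invariant sets to be matched are the stable fixed point $x_L < 0$ for $\mu \le 0$, and both the unstable fixed point $x_R > 0$ and the stable period-two orbit $\{u_L, u_R\}$ with $u_L < 0 < u_R$ for $\mu > 0$.

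First I would determine the parameter functions. For $\mu > 0$ consider the three equations
\begin{align*}
s_R &= f_R'(x_R(\mu);\mu), \\
s_L \, \partial_x f_R(0;\mu) &= s_R \, \partial_x f_L(0;\mu), \\
(s_L + 2 t\, y_L)\,s_R &= f_L'(u_L(\mu);\mu)\,f_R'(u_R(\mu);\mu),
\end{align*}
in the unknowns $(s_L, s_R, t)$, where $y_L < 0$ denotes the left leg of the period-two orbit of $\tilde g$, obtained from $\tilde g(\tilde g(y_L)) = y_L$ and depending smoothly on the parameters. At $\mu = 0$ all invariants coincide with the bifurcation values, giving $(s_L, s_R) = (a_L, a_R)$, but the third equation degenerates because $y_L(0) = 0$. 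I would rescale by the leading $\mu$-scaling of $y_L$ (linear, $y_L \sim c\,\mu$) to obtain a non-degenerate system, after which the implicit function theorem yields continuous solutions $(s_L(\mu), s_R(\mu), t(\mu))$ with the claimed asymptotics. For $\mu \le 0$ there is no period-two orbit, so I would drop the third equation, replace $f_R'(x_R)$ by $f_L'(x_L)$, and extend $t(\mu)$ continuously from its value at $\mu = 0$; the remaining two equations then determine $(s_L(\mu), s_R(\mu))$ by the same method.

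Given these parameters, the conjugacy is produced by the method of \S\ref{sec:construction}. For $\mu \le 0$ I would linearise both maps at their stable fixed points using Sternberg's theorem \cite{St57b}; the linearisations are conjugate by a unique affine map because the multipliers agree. Iterating forward and backward in the spirit of Belitskii \cite{Be86} extends the local conjugacy over all of $N$; each crossing of the non-differentiable point is regulated by the slope-ratio identity \eqref{eq:slopeRatiosMatch}, which is precisely the condition for the left- and right-derivatives of the conjugacy to match there. For $\mu > 0$ the same recipe is applied twice, once per invariant set: Sternberg at the repelling fixed point $x_R$ (extended by backward iteration through $0$ to the basin of repulsion), and Sternberg applied to the smooth second-iterate maps $f_R \circ f_L$ at $u_L$ and $f_L \circ f_R$ at $u_R$ (extended by iteration to the basin of attraction of the period-two orbit). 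This yields the two conjugacies of item (ii).

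The main obstacle is the matching of the period-two multiplier: it is exactly what forces a nonzero $t$ in \eqref{eq:pwq}, and its equation degenerates at $\mu = 0$. Handling this through a careful rescaling before applying the implicit function theorem is the technical heart of the argument, directly analogous to the smooth-case computation of \cite{GlSi23}. Once this is in place the bound \eqref{eq:theBound} follows automatically, since $\tilde g$ is first-order accurate to $f$ at the bifurcation point; the resulting conjugacy is near-identity, so $M(\mu) = h(N;\mu)$ is a small perturbation of $N = (-p,p)$ whose ratio to $N$ tends to $1$ as $\mu_0 \to 0$.
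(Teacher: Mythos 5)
Your overall skeleton is the paper's: choose $s_R$ to match the multiplier of $x_R$, choose $s_L$ from the slope-ratio condition \eqref{eq:slopeRatiosMatch}, determine $t$ by matching the period-two multiplier, resolve the degeneracy of that equation at $\mu=0$ by dividing out the leading $\mu$-scaling and applying the implicit function theorem (this is exactly the paper's function $U$ in \eqref{eq:Kpd}, whose $C^1$ smoothness is why piecewise-$C^3$ is assumed), and then build conjugacies by Sternberg linearisation extended outwards, using the second iterate for the period-two basin (Lemma \ref{le:constructLR}) and the single map near the repelling fixed point. So the strategy is sound, but three steps you treat as routine are where the paper has to do real work, and as written they are gaps.

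First, the bound \eqref{eq:theBound} does not ``follow automatically'' from first-order accuracy of $\tilde g$: Sternberg's theorem gives a conjugacy only up to an arbitrary normalisation, and after the Belitskii-type extension there is no a priori reason the resulting $h$ is near-identity on all of $N$. The paper proves this quantitatively via Lemma \ref{le:bound} (with the iteration estimates of Lemma \ref{le:fn}), applied here to $f^2$, and this control is also what justifies replacing $\tilde g$ by a globally well-behaved modification $\tilde g^*$ for $y<-2p$ (needed because the quadratic piece of $\tilde g_L$ violates the monotonicity/properness hypotheses far from $0$) and then concluding the image interval never leaves the region where $\tilde g^*=\tilde g$. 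You omit both the estimate and the modification. Second, the case $\mu=0$ cannot be handled by ``linearise at the stable fixed point'': the fixed point sits at the non-differentiable point, so Sternberg does not apply to $f$ there, nor naively to $f^2$, which is $C^1$ but generally not $C^2$ at $0$; the paper needs the separate construction of Lemma \ref{le:muEqualsZero}, gluing one-sided linearisations normalised so that $h'(0)=1$ (here the full one-sided derivatives must match, not merely their ratio). Third, applying Sternberg independently at $u_L$ (to $f_R\circ f_L$) and at $u_R$ (to $f_L\circ f_R$) produces two conjugacies of second iterates that need not satisfy $h\circ f=g\circ h$; one must instead define the branch of $h$ on one side of $x_R$ from the other via $h=g^{-1}\circ h_1\circ f$, as in \eqref{eq:constructLRProof30}, to obtain a conjugacy of $f$ itself. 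These are fixable within your framework, but each requires an argument you have not supplied.
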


\section{Topological conjugacies}
\label{sec:topConj}

Before we delve into constructions for differentiable conjugacies,
it is worth explaining why topological conjugacies provide such a weak criterion to impose upon a normal form.
The following result illustrates this weakness for saddle-node-like border-collision bifurcations.
It is analogous to Theorem \ref{th:snLike},
but instead of the extended normal form \eqref{eq:pwq},
uses the piecewise-linear map \eqref{eq:pwl} and the smooth map \eqref{eq:sn}.

\begin{proposition}
Suppose $f$ of the form \eqref{eq:f} is piecewise-$C^1$ and satisfies \eqref{eq:continuity}, \eqref{eq:bcbAssumption},
$\beta > 0$, $a_L > 1$, and $0 < a_R < 1$.
Let $s_L > 1$, $0 < s_R < 1$, and $\nu(\mu) = f(0;\mu)$.
Then there exist $\mu_0 > 0$ 
and neighbourhoods $N$, $M(\mu)$, and $P(\mu)$ of $0$,
such that $f$ on $N$,
\eqref{eq:pwl} on $M(\mu)$,
and \eqref{eq:sn} on $P(\mu)$ are topologically conjugate for all $\mu \in (-\mu_0,\mu_0)$.
\label{pr:topConj}
\end{proposition}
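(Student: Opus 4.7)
The plan is to exploit transitivity of topological conjugacy together with the folklore fact that, on a small interval, two orientation-preserving continuous maps are topologically conjugate whenever they share the same combinatorial data: the number and cyclic order of fixed points, and the direction of orbit motion on each complementary arc. It therefore suffices to verify that $f$, the piecewise-linear map \eqref{eq:pwl}, and the quadratic map \eqref{eq:sn} share this data for all sufficiently small $|\mu|$ in appropriate neighborhoods of $0$, and then to build conjugacies by the standard fundamental-domain construction. By transitivity, a conjugacy between any two of the three then yields the conjugacy between the third pair.

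First I would fix small neighborhoods $N=(-p,p)$, $M(\mu)$ and $P(\mu)$ of $0$ on which each map is strictly increasing and a homeomorphism onto its image. This uses $a_L,a_R>0$ for $f$, $s_L,s_R>0$ for \eqref{eq:pwl}, and restriction of \eqref{eq:sn} to $P(\mu)\subset(-\infty,1/2)$ to stay on the increasing branch. Next I would locate fixed points via the implicit function theorem: applied to $f_L(x;\mu)-x=0$ and $f_R(x;\mu)-x=0$ using $a_L\neq 1$, $a_R\neq 1$, and $\beta>0$, this yields an unstable branch $x_L(\mu)<0$ and a stable branch $x_R(\mu)>0$ that exist only for $\mu>0$ and collide at $0$ as $\mu\to 0^+$. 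The analogous branches for \eqref{eq:pwl} and \eqref{eq:sn} can be written down explicitly. For $\mu_0>0$ sufficiently small, all three maps then have matching fixed-point structures in their respective neighborhoods for every $\mu\in(-\mu_0,\mu_0)$.

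I would then construct conjugacies in each of the three regimes. For $\mu<0$ none of the maps has a fixed point, so each monotonically translates orbits across its neighborhood; picking an interior point $z_0$, using $[z_0,f(z_0)]$ as a fundamental domain, declaring an arbitrary increasing homeomorphism between the fundamental domains of two maps, and extending by the conjugacy relation \eqref{eq:conjugacyRelation} (and its inverse) yields a topological conjugacy on the full neighborhood. For $\mu=0$ the point $0$ splits each neighborhood into two invariant half-intervals on which the same construction applies; the two half-conjugacies each fix $0$ and glue continuously. For $\mu>0$ the neighborhoods decompose into three invariant subintervals separated by the two fixed points, and the fundamental-domain construction is applied on each, with the resulting conjugacies pinned at the fixed points so as to glue.

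The main obstacle is not any single step --- each is standard for one-dimensional dynamics --- but rather the bookkeeping: confirming that a single $\mu_0$ works uniformly for all three maps, ensuring the extended conjugacies really do exhaust the neighborhoods $N$, $M(\mu)$ and $P(\mu)$ (using that forward and backward orbits of any non-fixed point either exit the neighborhood monotonically or accumulate at an adjacent fixed point), and checking that the conjugacy between $f$ and \eqref{eq:pwl} matches orientations across the non-differentiable point $x=y=0$, which is automatic here because both maps are increasing there. In contrast to Theorems \ref{th:noBif}--\ref{th:pdLike}, no matching of slopes or of the ratio \eqref{eq:slopeRatiosMatch} is required; this is precisely the looseness that makes topological conjugacy an inadequate criterion for selecting coefficients in the normal form.
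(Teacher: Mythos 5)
Your proposal is correct and follows essentially the route the paper intends: the paper omits the proof, citing the classical fact that two increasing maps are topologically conjugate when they have the same number of fixed points and the same signs of $f(x)-x$ between corresponding fixed points (Theorem 2.2 of \cite{GlSi23}), which is exactly the combinatorial data you verify for the three maps in the three regimes $\mu<0$, $\mu=0$, $\mu>0$. Your fundamental-domain construction is just the standard proof of that cited result, so the only difference is that you re-derive it rather than quote it.
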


We omit a proof of Proposition \ref{pr:topConj} as it is a simple consequence of a classical result
that two increasing maps $f$ and $g$ on $\mathbb{R}$ are topologically conjugate if they have the same number of fixed points
and the signs of $f(x)-x$ and $g(y)-y$ are the same between corresponding fixed points, see Theorem 2.2 of \cite{GlSi23}.

There are many variations on Proposition \ref{pr:topConj} that could be stated.
For example, the value of $\nu$ in the piecewise-linear map \eqref{eq:pwl}
can to be converted to $-1$, $0$, or $1$ by a linear rescaling of $y$,
so for \eqref{eq:pwl} we in fact only need $\nu(\mu)$ to have the same sign as $\mu$.
Proposition \ref{pr:topConj} shows that both smooth and piecewise-smooth maps
could be used as a normal form when equivalence is defined by the existence of a local topological conjugacy.

Proposition \ref{pr:topConj} also shows that the slopes $s_L$ and $s_R$ in \eqref{eq:pwl}
could be taken to be {\em any} values satisfying $0 < s_R < 1 < s_L$.
This runs counter to the conventional construction of the border-collision normal form
that sets $s_L$ and $s_R$, or in higher dimensions the left and right Jacobian matrices,
equal to those of the general map evaluated at the bifurcation \cite{Si16}.
Hence differentiable conjugacies provide one way to justify a particular choice for an (extended) normal form.

\section{Differentiable conjugacies for smooth maps}
\label{sec:diffConj}

Here we construct differentiable conjugacies for smooth monotone maps.
In \S\ref{sec:construction} these conjugacies will be applied to the individual pieces of our piecewise-smooth maps.

We first state a result of Sternberg that in a neighbourhood of a hyperbolic fixed point a smooth map
is differentiably conjugate to a linear map.
We then use orbits of the maps to extend the domain of this conjugacy outwards,
using different constructions for increasing maps and for decreasing maps.
We also show how a differentiable conjugacy between two smooth maps can be found that in addition maps
a given point of one map to a given point of the other map.
This will be used in \S\ref{sec:construction} to obtain a conjugacy
that maps the non-differentiable point of one piecewise-smooth map
to the non-differentiable point of another piecewise-smooth map.
Finally in this section we provide a lemma
that underpins our later demonstration that $M(\mu) \approx N$ in Theorems \ref{th:noBif}--\ref{th:pdLike}.

\subsection{Linearisations}
\label{sub:linearisations}

Throughout this section $(a,b)$ is an open interval containing $0$.
The following result is taken from Sternberg \cite{St57b}.

\begin{lemma}[local linearisation]
Let $f : (a,b) \to \mathbb{R}$ be $C^2$.
Suppose $0$ is a fixed point of $f$ with $\frac{d f}{d x}(0) = \lambda \notin \{ -1,0,1 \}$.
Define the linear map $g(y) = \lambda y$.
Then there exist open neighbourhoods $U \subset (a,b)$ of $0$ and $V \subset \mathbb{R}$ of $0$
such that $f$ on $U$ and $g$ on $V$ are differentiably conjugate.
\label{le:Sternberg}
\end{lemma}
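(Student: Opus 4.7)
The plan is to prove this via Koenigs' classical construction, which realises the conjugacy as a uniform limit of rescaled iterates of $f$.

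First I would reduce to the attracting case $0 < |\lambda| < 1$. If $|\lambda| > 1$, the hypothesis $\lambda \neq 0$ and the inverse function theorem produce a local $C^2$ inverse of $f$ near $0$ with derivative $\lambda^{-1}$; a differentiable conjugacy between $f^{-1}$ and $y \mapsto \lambda^{-1} y$ is automatically a differentiable conjugacy between $f$ and $y \mapsto \lambda y$, so we may assume $|\lambda| < 1$. Then I would set up the Koenigs sequence. Choose $\varepsilon > 0$ small enough that $(|\lambda| + \varepsilon)^2 < |\lambda|$; this is possible precisely because $|\lambda| < \sqrt{|\lambda|}$ when $|\lambda| < 1$. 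Writing $f(x) = \lambda x + \phi(x)$, the $C^2$ hypothesis gives $\phi(x) = \cO\left(x^2\right)$ and $f'(x) - \lambda = \cO(|x|)$ near $0$, so on a sufficiently small symmetric neighbourhood $U_0 \subset (a,b)$ of $0$ we have $f(U_0) \subset U_0$ and $|f^n(x)| \leq (|\lambda| + \varepsilon)^n |x|$ for all $n \geq 0$. Define $h_n(x) = \lambda^{-n} f^n(x)$ on $U_0$.

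The central step is to show that $h_n$ converges in $C^1(U_0)$. The consecutive differences satisfy
\[
h_{n+1}(x) - h_n(x) \;=\; \lambda^{-n-1} \phi\bigl(f^n(x)\bigr),
\]
and combining the quadratic bound on $\phi$ with the geometric decay of $f^n$ produces an estimate of order $|\lambda|^{-n-1} (|\lambda|+\varepsilon)^{2n} |x|^2$. The choice of $\varepsilon$ makes the resulting series summable, so $(h_n)$ is Cauchy and converges uniformly to a continuous limit $h$. An analogous argument for the derivatives, using $h_n'(x) = \prod_{k=0}^{n-1} \lambda^{-1} f'\bigl(f^k(x)\bigr)$ with each factor of the form $1 + \cO(|f^k(x)|)$, shows that the infinite product converges uniformly on $U_0$. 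Hence $h$ is $C^1$ on $U_0$ with $h'(0) = 1$. Passing to the limit in the telescoped identity $h_{n-1} \circ f = \lambda h_n$ then yields the functional equation $h \circ f = \lambda h$. Finally, since $h'(0) = 1 \neq 0$, the inverse function theorem produces an open neighbourhood $U \subset U_0$ of $0$ on which $h$ is a $C^1$ diffeomorphism onto $V = h(U)$, and this is the required differentiable conjugacy.

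The hard part is the summability estimate $(|\lambda| + \varepsilon)^2 < |\lambda|$, which is precisely where the hyperbolicity hypothesis is used: it fails when $|\lambda| = 1$ because there is no spectral gap to absorb the quadratic error from $\phi$. The condition $\lambda \neq 0$ is independently required, both to form the rescaling factor $\lambda^{-n}$ and to invert $f$ in the reduction step. Since no sign hypothesis on $\lambda$ is used, the case $\lambda < 0$ (including values close to $-1$) is handled identically.
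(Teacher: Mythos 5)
Your proof is correct, but it takes a different route from the paper: the paper does not prove Lemma \ref{le:Sternberg} at all, it simply imports it as a black box from Sternberg's 1957 paper \cite{St57b}. What you have written is a self-contained Koenigs-type argument, and the details check out: the reduction to $0<|\lambda|<1$ via the $C^2$ local inverse is legitimate (a conjugacy $h\circ f^{-1}=\lambda^{-1}h$ rearranges to $h\circ f=\lambda h$ on the common domain); the choice $(|\lambda|+\varepsilon)^2<|\lambda|$ makes $\lambda^{-n-1}\phi(f^n(x))$ geometrically summable, giving uniform convergence of $h_n=\lambda^{-n}f^n$; and the derivative product $\prod_{k}\lambda^{-1}f'(f^k(x))=\prod_k\bigl(1+\cO((|\lambda|+\varepsilon)^k)\bigr)$ converges uniformly because the error terms are summable, so $h$ is $C^1$ with $h'(0)=1$ and the inverse function theorem upgrades it to a local diffeomorphism, with the conjugacy relation obtained by passing to the limit in $h_{n-1}\circ f=\lambda h_n$. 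The trade-off between the two approaches is essentially scope versus self-containment: Sternberg's theorem gives sharper smoothness conclusions (e.g.\ $C^{k}$ conjugacies for smoother $f$), which the paper does not need, whereas your argument proves exactly the $C^1$-conjugacy statement from $C^2$ data by elementary means, and it correctly isolates where each hypothesis enters ($|\lambda|\neq 1$ for the spectral gap that absorbs the quadratic error, $\lambda\neq 0$ for the rescaling and the inversion), with the sign of $\lambda$ playing no role since all estimates involve only $|\lambda|$.
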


Lemma \ref{le:Sternberg} shows there exists a diffeomorphism $h$ such that the conjugacy relation
\begin{equation}
h(f(x)) = g(h(x))
\label{eq:conjugacyRelation2}
\end{equation}
is satisfied on a neighbourhood of $0$.
We now show that the domain of $h$ can be expanded to any interval on which $f$ is strictly increasing or strictly decreasing
and contains no other fixed points or period-two solutions.
We first treat the increasing case for which the construction
is illustrated in Fig.~\ref{fig:pwsDiffConjBifProof}.

\begin{figure}[h!]
\begin{center}
\includegraphics[height=6cm]{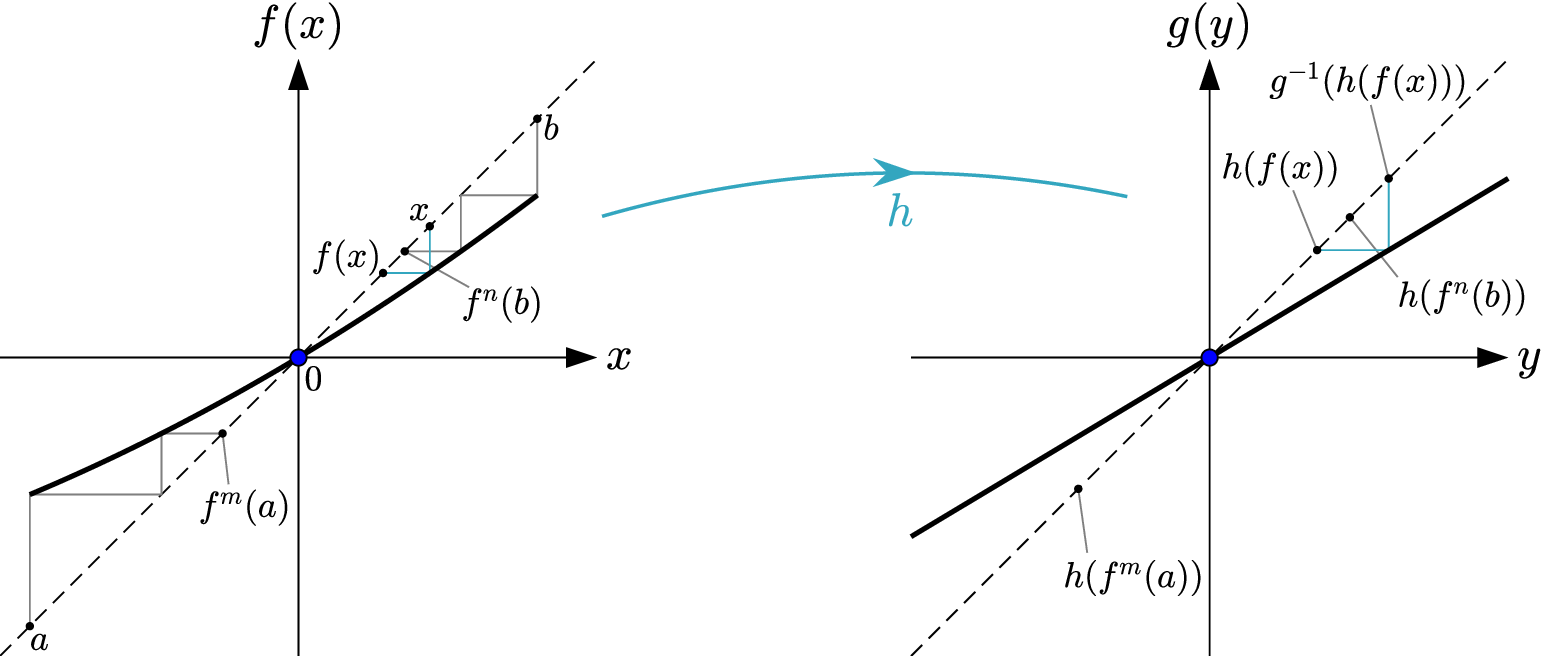} 
\caption{
A sketch showing how the domain of $h$ is extended in the proof of Lemma \ref{le:globalLinearisationInc}.
Here $m = n = 2$ and the sample point $x$ belongs to $[f^n(b),f^{n-1}(b))$.
The corresponding point $h(x)$, defined by \eqref{eq:globalLinearisationIncProof2},
is the result of mapping $x$ under $f$, then $h$, and then $g^{-1}$ as indicated in blue.
\label{fig:pwsDiffConjBifProof}
} 
\end{center}
\end{figure}

\begin{lemma}[global linearisation, increasing case]
Let $f : (a,b) \to \mathbb{R}$ be $C^2$ with
$\frac{d f}{d x}(x) > 0$ for all $x \in (a,b)$.
Suppose $0$ is the unique fixed point of $f$ with $\frac{d f}{d x}(0) = \lambda \ne 1$.
Define the linear map $g(y) = \lambda y$.
Then there exists increasing $h$ that differentiably conjugates $f$ on $(a,b)$ to $g$ on an open interval containing $0$.
\label{le:globalLinearisationInc}
\end{lemma}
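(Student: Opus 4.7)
The plan is to extend Sternberg's local linearisation (Lemma~\ref{le:Sternberg}) outwards to the whole of $(a,b)$ by pushing it along forward orbits of $f$ in the attracting case $0<\lambda<1$, and along backward orbits in the repelling case $\lambda>1$. Because $f'>0$ on $(a,b)$ we have $\lambda>0$, and since $0$ is the only fixed point, a sign comparison near $0$ propagated by monotonicity yields $(f(x)-x)\,x\,(\lambda-1)>0$ for every $x\neq 0$ in $(a,b)$. Consequently, in the attracting case the forward orbit of any $x\in(a,b)$ lies in $(a,b)$ and decreases monotonically to $0$; in the repelling case the same inequality forces $f((a,b))\supset(a,b)$ on each side of $0$, so $f^{-1}$ is defined on all of $(a,b)$ and the backward orbit of each $x$ converges monotonically to $0$.

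First I invoke Lemma~\ref{le:Sternberg} to obtain a $C^1$-diffeomorphism $h_0\colon U\to V_0$ on an interval $U=(\alpha,\beta)$ with $\alpha<0<\beta$ contained in $(a,b)$, satisfying $h_0(f(x))=\lambda h_0(x)$ whenever $x,f(x)\in U$. For each $x\in(a,b)$, the monotone convergence above supplies a least integer $n(x)\ge 0$ such that $f^{n(x)}(x)\in U$ in the attracting case, or $f^{-n(x)}(x)\in U$ in the repelling case. I then define
\[
h(x)=\lambda^{-n(x)}\,h_0\bigl(f^{n(x)}(x)\bigr) \quad (0<\lambda<1), \qquad h(x)=\lambda^{n(x)}\,h_0\bigl(f^{-n(x)}(x)\bigr) \quad (\lambda>1),
\]
both of which reduce to $h_0$ on $U$.

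The verification proceeds in three steps. \emph{Consistency}: replacing $n(x)$ by any larger $n$ with $f^{\pm n}(x)\in U$ yields the same value of $h(x)$, because the intermediate iterates for $n(x)\le k\le n$ are monotone in $k$ and lie between the endpoints $f^{\pm n(x)}(x)\in U$ and $0\in U$, so all remain in the interval $U$ and allow iterated application of Sternberg's relation. \emph{Regularity}: on each open set where $n(x)$ is locally constant, $h$ is a composition of $C^1$ maps with strictly positive derivatives, hence $C^1$ and strictly increasing; adjacent formulas agree on their shared boundary by consistency, so $h$ extends to a $C^1$-diffeomorphism of $(a,b)$ onto an open interval containing $0$. \emph{Conjugacy}: for $x,f(x)\in(a,b)$, substitution into the defining formulas, together with Sternberg's relation on $U$ in the boundary case $x\in U$, gives $h(f(x))=\lambda h(x)$. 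The main obstacle is the consistency step; once the iterate-indexed formulas are known to glue into a single smooth function, everything else is a routine chain-rule and continuity check. Consistency itself hinges on the monotonicity of $f^{\pm 1}$, the connectedness of $U$, and the uniqueness of the fixed point, which together confine the intermediate iterates to $U$ and so justify iterating the local relation.
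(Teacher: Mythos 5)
Your proposal is correct and is essentially the paper's argument: both start from Sternberg's local linearisation and push it outwards along orbits using the linearity of $g$, your first-entry-time formula $h(x)=\lambda^{-n(x)}h_0\bigl(f^{n(x)}(x)\bigr)$ being exactly the closed form of the paper's recursive extension $h(x)=g^{-1}(h(f(x)))$ over successive fundamental intervals. The only cosmetic omission is that you should normalise $h_0$ to be increasing (replace $h_0$ by $-h_0$ if necessary, as the paper does via $y\mapsto -y$) before asserting that $h$ has strictly positive derivative.
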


\begin{proof}
Suppose $\lambda \in (0,1)$ (the case $\lambda > 1$ can be proved similarly using backwards iterates instead of forward iterates).
Then $f^i(a)$ is an increasing sequence converging to $0$, while $f^i(b)$ is a decreasing sequence converging to $0$.
By Lemma \ref{le:Sternberg} there exist $m, n \ge 0$,
a neighbourhood $V \subset \mathbb{R}$ of $0$,
and a $C^1$ conjugacy $h : \left( f^m(a), f^n(b) \right) \to V$ such that
the conjugacy relation \eqref{eq:conjugacyRelation2} holds
for all $x \in \left( f^m(a), f^n(b) \right)$.
In view of the substitution $y \mapsto -y$ we can assume $h$ is increasing.

We now extend the domain of $h$ to the right until reaching $b$ (assuming $n \ne 0$ otherwise this is not needed)
in such a way that $h$ is $C^1$ and satisfies \eqref{eq:conjugacyRelation2} on the larger domain.
For all $x \in [f^n(b),f^{n-1}(b))$, define
\begin{equation}
h(x) = g^{-1}(h(f(x))),
\label{eq:globalLinearisationIncProof2}
\end{equation}
see Fig.~\ref{fig:pwsDiffConjBifProof}.
This is well-defined, because $f(x)$ belongs to the domain of $h$, while $g^{-1}$ is defined on $\mathbb{R}$.
By construction $h$ satisfies \eqref{eq:conjugacyRelation2} on the larger domain $\left( f^m(a), f^{n-1}(b) \right)$.
Certainly $h$ is $C^1$ on $\left( f^n(b), f^{n-1}(b) \right)$, so it remains to show $h$ is also $C^1$ at $x = f^n(b)$.
By \eqref{eq:conjugacyRelation2} equation \eqref{eq:globalLinearisationIncProof2}
holds for all $x \in \left( 0, f^n(b) \right)$,
so it holds on a neighbourhood of $f^n(b)$.
At $x = f^n(b)$ the right-hand side of \eqref{eq:globalLinearisationIncProof2} is $C^1$
because $h$ is $C^1$ at $f^{n+1}(b)$,
so $h$ is indeed $C^1$ at $f^n(b)$.

By same argument we can extend the domain of $h$ outwards to $f^{n-2}(b)$, then to $f^{n-3}(b)$, and so on until reaching $b$.
Similarly we can extend the domain of $h$ to the left until reaching $a$, and thus covering $(a,b)$.
\end{proof}

The next result accommodates the decreasing case.
The assumption $b < f(a)$ is chosen without loss of generality.

\begin{lemma}[global linearisation, decreasing case]
Let $f : (a,b) \to \mathbb{R}$ be $C^2$
with $\frac{d f}{d x}(x) < 0$ for all $x \in (a,b)$.
Suppose $0$ is a fixed point of $f$ with $\frac{d f}{d x}(0) = \lambda \ne -1$ and $f^2$ has no other fixed points in $(a,b)$.
Suppose $b < f(a)$.
Define the linear map $g(y) = \lambda y$.
Then there exists increasing $h$ that differentiably conjugates
$f$ on $\left( f^{-1}(b), b \right)$ to $g$ on an open interval containing $0$.
\label{le:globalLinearisationDec}
\end{lemma}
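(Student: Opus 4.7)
The plan is to combine Sternberg's Lemma \ref{le:Sternberg} with an iterative extension procedure analogous to the proof of Lemma \ref{le:globalLinearisationInc}. Since $f$ is strictly decreasing, $\lambda = f'(0) < 0$, so the hypothesis $\lambda \ne -1$ automatically gives $\lambda \notin \{-1, 0, 1\}$. Lemma \ref{le:Sternberg} therefore produces a $C^1$ conjugacy $h_0 : U_0 \to V_0$ on a neighbourhood $U_0$ of $0$ satisfying $h_0(f(x)) = \lambda h_0(x)$, and after composing with $y \mapsto -y$ if necessary I may assume $h_0$ is increasing. I then extend $h_0$ to an increasing $C^1$ diffeomorphism $h$ on all of $J := (f^{-1}(b), b)$ by dragging $h_0$ along forward iterates of $f$ when $\lambda^2 < 1$ and along backward iterates when $\lambda^2 > 1$.

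First I would establish the dynamics on $J$. The assumption $b < f(a)$ is equivalent to $f^{-1}(b) > a$, so $J \subset (a,b)$. The hypothesis that $f^2$ has no fixed points in $(a,b)$ apart from $0$, together with $(f^2)'(0) = \lambda^2 \ne 1$, forces the sign of $f^2(x) - x$ to be constant on each side of $0$ and to agree near $0$ with that of $(\lambda^2 - 1) x$. Propagating this sign out to the boundary of $J$ yields $f^2(b) < b$ when $\lambda^2 < 1$ and $f^2(b) > b$ when $\lambda^2 > 1$. Because $f$ is decreasing, these inequalities translate into $f(J) \subset J$ and $J \subset f(J)$ respectively; in the second case $f^{-1}$ is well-defined on $J$ and satisfies $f^{-1}(J) \subset J$. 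Either way, iterating the appropriate map drives every point of $J$ to $0$.

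With this in hand, for $\lambda^2 < 1$ I pick, for each $x \in J$, an integer $n = n(x)$ large enough that $f^n(x) \in U_0$ and set
\[
h(x) := \lambda^{-n} h_0\bigl(f^n(x)\bigr).
\]
For $\lambda^2 > 1$ I use backward iterates and set $h(x) := \lambda^{n} h_0\bigl(f^{-n}(x)\bigr)$ instead. The conjugacy relation $h_0(f(y)) = \lambda h_0(y)$ on $U_0$ makes $h$ independent of the choice of $n$ and ensures agreement with $h_0$ wherever both are defined; a one-line computation then gives $h(f(x)) = \lambda h(x)$. Smoothness ($C^1$) is inherited from $h_0$ and the iterates $f^{\pm n}$. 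For monotonicity, observe that for even $n$ the scalar $\lambda^{\pm n}$ is positive and $f^{\pm n}$ is increasing, whereas for odd $n$ both factors reverse sign; the product is increasing in either case. Since $h'(x)$ is a product of nonzero factors, $h : J \to h(J)$ is the desired increasing $C^1$ diffeomorphism, and $h(J)$ is an open interval containing $0$.

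The step I expect to demand the most care is the invariance analysis of the second paragraph, in particular justifying that $f^2$ is defined close enough to the right endpoint of $J$ for the sign argument on $f^2(x) - x$ to be pushed out to $x = b$. This is precisely where the hypotheses $b < f(a)$ (which confines $J$ to $(a,b)$) and ``$f^2$ has no other fixed points in $(a,b)$'' (which transports the sign from the origin to the endpoint) are used essentially; without them one cannot determine whether $f(J) \subset J$ or $J \subset f(J)$ holds, and the extension procedure above loses its foothold.
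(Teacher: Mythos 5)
Your construction is correct and rests on the same two ingredients as the paper's proof---Sternberg's local linearisation (Lemma \ref{le:Sternberg}) and the functional equation $h \circ f = g \circ h$ used to propagate it outward---but the globalisation is packaged differently. The paper extends the local conjugacy in finitely many steps over the intervals bounded by consecutive points $f^i(b)$ of the orbit of the endpoint, setting $h(x) = g^{-1}(h(f(x)))$ one such interval at a time and checking $C^1$ matching at each junction; you instead prove the invariance $f(J) \subseteq J$ (or $f^{-1}(J) \subseteq J$) for $J = \left( f^{-1}(b), b \right)$ and define $h$ in one stroke by $h(x) = \lambda^{-n(x)} h_0\bigl(f^{n(x)}(x)\bigr)$ with a locally constant $n$. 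Your route needs the extra (easy) facts that every orbit in $J$, not just that of $b$, tends to $0$, and that $U_0$ can be shrunk to be forward (resp.\ backward) invariant so the value is independent of $n$; in exchange it avoids the junction-point bookkeeping. One small imprecision: in the expanding case $\lambda < -1$ the quantity $f^2(b)$ need not be defined (e.g.\ $f(x) = -2x$ on $(-1,1)$ satisfies every hypothesis yet $f(b) < a$), so ``$f^2(b) > b$'' should be replaced by the inclusion you actually use, $f(b) \le f^{-1}(b)$; this follows by contradiction, since $f(b) > f^{-1}(b)$ would make $f^2(b)$ defined with $f^2(b) < b$ and the intermediate value theorem would then yield a fixed point of $f^2$ in $(0,b)$. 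In the contracting case your sign-propagation is sound, because the no-other-fixed-point hypothesis does force $f$ to map $(0,b]$ into $(a,b)$, exactly as you anticipated; note also the inclusions may be non-strict (the hypotheses do not exclude $f^2(b) = b$), but $f(J) \subseteq J$ suffices.
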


\begin{proof}
Suppose $\lambda \in (-1,0)$ (the case $\lambda < -1$ can be proved similarly).
Notice $f^{-1}(b) \in (a,0)$ and $f^i(b) \in (a,b)$ for all $i \ge 0$, with $f^i(b) > 0$ if $i$ is even and $f^i(b) < 0$ if $i$ is odd.
Importantly $f^i(b) \to 0$ as $i \to \infty$ by the assumption that $f^2$ has no other fixed points.
Thus by Lemma \ref{le:Sternberg} there exists even $n \ge 0$,
a neighbourhood $V \subset \mathbb{R}$ of $0$,
and a $C^1$ conjugacy $h : \left( f^{n+1}(b), f^n(b) \right) \to V$ such that
the conjugacy relation \eqref{eq:conjugacyRelation2} holds
for all $x \in \left( f^{n+1}(b), f^n(b) \right)$,
and in view of the substitution $y \mapsto -y$ we can assume $h$ is increasing.

If $n \ne 0$ we extend the domain of $h$ outwards.
For all $x \in \left( f^{n-1}(b), f^{n+1}(b) \right]$, define
\begin{equation}
h(x) = g^{-1}(h(f(x))),
\label{eq:globalLinearisationDecProof2}
\end{equation}
so then $h$ is a $C^1$ conjugacy from $f$ to $g$ on the larger domain
$\left( f^{n-1}(b), f^n(b) \right)$ for the same reasons as in the proof of Lemma \ref{le:globalLinearisationInc}.
By the same argument we can extend the domain to
$\left( f^{n-1}(b), f^{n-2}(b) \right)$,
then to $\left( f^{n-3}(b), f^{n-2}(b) \right)$, and so on until reaching $\left( f^{-1}(b), b \right)$.
\end{proof}

\subsection{Constructing conjugacies that map one given point to another}
\label{sub:matchingPoints}

By two applications of the above lemmas
we can conclude that two smooth maps $f$ and $g$ are differentiably conjugate
near fixed points having the same stability multiplier.
We now show that the corresponding conjugacy can be chosen to map a given point near the fixed point of $f$ to a given point near the fixed point of $g$.
Specifically, the results below give $h(b) = d$,
or more precisely $\lim_{x \to b^-} h(x) = d$.
We cannot also demand $h(a)$ maps to another given point $c$.
For this reason Lemma \ref{le:matchTwoPointsInc} assumes the domain of $g$ extends to $-\infty$ and
constructs the value of $c = h(a)$ over the course of the proof.
This is illustrated in Fig.~\ref{fig:pwsDiffMatching}.

\begin{figure}[h!]
\begin{center}
\includegraphics[height=6cm]{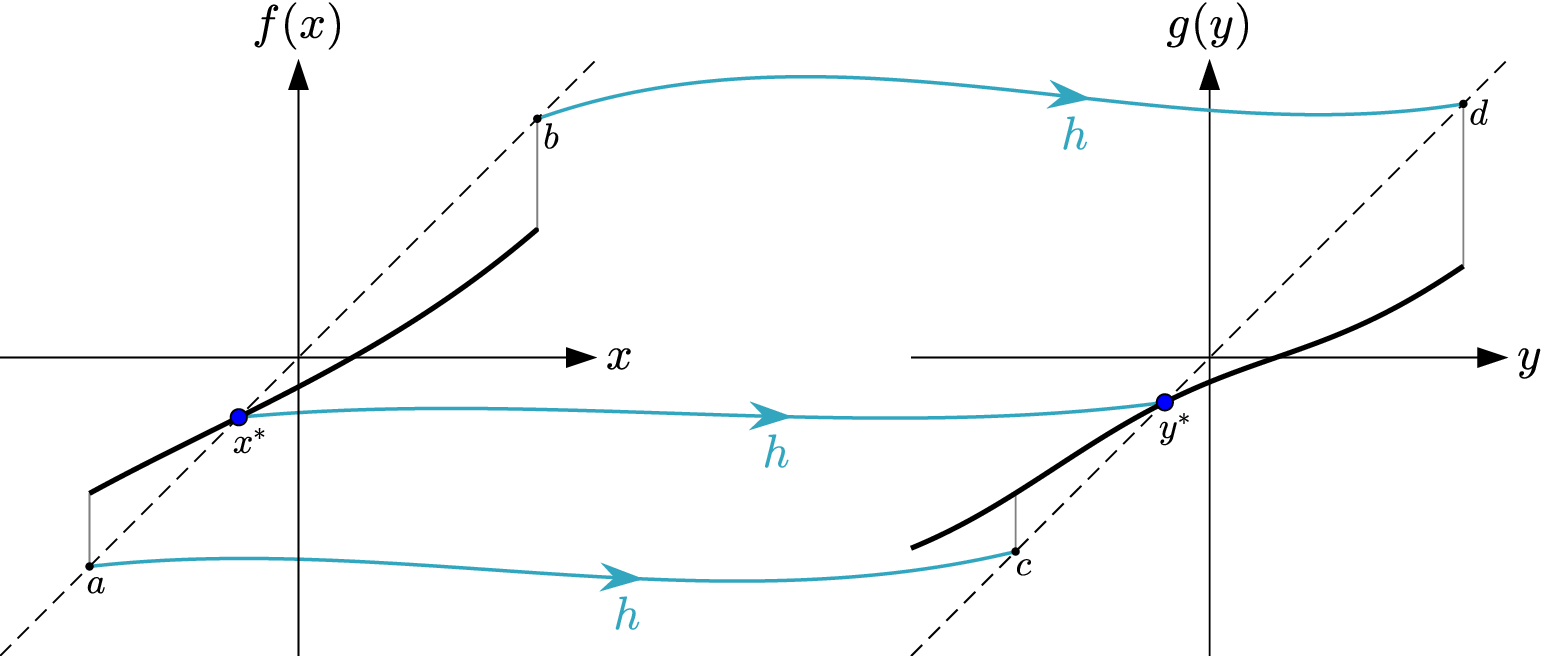} 
\caption{
A sketch of maps $f$ and $g$ satisfying the assumptions of Lemma \ref{le:matchTwoPointsInc}
with $\lambda = f'(x^*) = g'(y^*) \in (0,1)$.
The differentiable conjugacy $h$ maps the points $a$, $x^*$, and $b$
to $c$, $y^*$, and $d$, as indicated.
\label{fig:pwsDiffMatching}
} 
\end{center}
\end{figure}

\begin{lemma}[matching points, increasing case]
Let $f : (a,b) \to \mathbb{R}$ be $C^2$ with $\frac{d f}{d x}(x) > 0$ for all $x \in (a,b)$,
and suppose $f$ has a unique fixed point $x^* \in (a,b)$ with $\frac{d f}{d x}(x^*) = \lambda \ne 1$.
Let $g : (-\infty,d) \to \mathbb{R}$ be $C^2$ with $\frac{d g}{d y}(y) > 0$ for all $y < d$,
and suppose $g$ has a unique fixed point $y^* < d$ with $\frac{d g}{d y}(y^*) = \lambda$.
Also suppose $g(y) \to -\infty$ as $y \to -\infty$.
Then there exists $c < y^*$ and an increasing diffeomorphism
$h : (a,b) \to (c,d)$ that conjugates $f$ to $g$.
\label{le:matchTwoPointsInc}
\end{lemma}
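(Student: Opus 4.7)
The plan is to linearise both $f$ and $g$ via Lemma \ref{le:globalLinearisationInc}, rescale the two linearisations so they have a common image, and then compose to obtain $h$. I will assume $\lambda \in (0,1)$ for concreteness; the case $\lambda > 1$ is analogous, using forward iterates of $g$ (which escape to $-\infty$) in place of backward iterates at the extension step.

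Apply Lemma \ref{le:globalLinearisationInc} to $f$ on $(a,b)$ to obtain an increasing $C^1$ diffeomorphism $\phi : (a,b) \to (\alpha_1, \beta_1)$ conjugating $f$ to $\ell(z) := \lambda z$ with $\phi(x^*) = 0$. Choose any $c_0 \in (-\infty, y^*)$ and apply the same lemma to $g|_{(c_0,d)}$ to obtain $\psi_0 : (c_0, d) \to (\alpha_0, \beta_0)$ with $\psi_0(y^*) = 0$ and $\psi_0(g(y)) = \lambda \psi_0(y)$. Because the conjugacy relation is invariant under rescaling, replace $\psi_0$ by $\psi := (\beta_1/\beta_0)\,\psi_0$; its image becomes $((\beta_1/\beta_0)\alpha_0, \beta_1)$, whose right endpoint matches that of $\phi$.

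The key remaining step is to extend the domain of $\psi$ leftward so that its image also contains $\alpha_1$. Since $\lambda \in (0,1)$ and $g(y) \to -\infty$ as $y \to -\infty$, $g$ restricts to a bijection of $(-\infty, y^*)$ onto itself, so $g^{-1}$ is defined on $(-\infty, y^*)$. For each $n \ge 1$, set $\psi(y) := \psi(g(y))/\lambda$ for $y \in (g^{-n}(c_0), g^{-(n-1)}(c_0)]$. By the same reasoning as in the proof of Lemma \ref{le:globalLinearisationInc}, this preserves $C^1$ smoothness across each boundary point and respects the conjugacy relation, while rescaling the leftward reach of the image by the factor $1/\lambda > 1$ at each stage. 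The iterates $g^{-n}(c_0)$ are strictly decreasing and cannot converge to a finite limit $y_\infty$, because continuity would force $g(y_\infty) = y_\infty$ and hence $y_\infty = y^*$, contradicting $g^{-n}(c_0) < c_0 < y^*$. Thus $g^{-n}(c_0) \to -\infty$, and after finitely many stages the image of $\psi$ contains $(\alpha_1, \beta_1)$.

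Setting $h := \psi^{-1} \circ \phi$ then gives an increasing $C^1$ diffeomorphism from $(a,b)$ onto $(c,d)$ with $c := \psi^{-1}(\alpha_1) < y^*$, and the conjugacy relation $h(f(x)) = \psi^{-1}(\lambda \phi(x)) = g(\psi^{-1}(\phi(x))) = g(h(x))$ follows immediately, as does $h(x^*) = y^*$. The main obstacle is showing that the leftward extension really reaches arbitrarily far; this depends on $g^{-n}(c_0) \to -\infty$, which in turn rests on the uniqueness of $y^*$ as a fixed point of $g$ together with the hypothesis $g(y) \to -\infty$ as $y \to -\infty$.
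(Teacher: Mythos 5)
Your proposal is correct and takes essentially the same route as the paper: linearise $f$ and $g$ via Lemma \ref{le:globalLinearisationInc}, rescale linearly so the right endpoints of the two linearised images coincide, and compose to obtain $h = \psi^{-1} \circ \phi$ (the paper's $h = h_2^{-1} \circ h_3 \circ h_1$ with the rescaling $h_3$ folded into $\psi$). The only difference is cosmetic: where the paper chooses the left endpoint $\tilde{c}$ of the $g$-linearisation's domain negative enough in advance (asserting $r \to -\infty$ as $\tilde{c} \to -\infty$), you fix $c_0$ and then extend the linearisation leftward by iterating the functional equation $\psi(y) = \psi(g(y))/\lambda$ — the same mechanism, spelled out explicitly.
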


\begin{proof}
Define the linear map $\ell(z) = \lambda z$.
By Lemma \ref{le:globalLinearisationInc} to applied $f$ (shifted so its fixed point is $0$),
there exist $p < 0$, $q > 0$, and an increasing diffeomorphism
$h_1 : (a,b) \to (p,q)$ that conjugates $f$ to $\ell$.
By Lemma \ref{le:globalLinearisationInc} to applied $g$,
for any $\tilde{c} < y^*$ there exist $r < 0$, $s > 0$, and an increasing diffeomorphism
$h_2 : (\tilde{c},d) \to (r,s)$ that conjugates $g$ to $\ell$.
Since $g(y) \to -\infty$ as $y \to -\infty$, we must have $r \to -\infty$ as $\tilde{c} \to -\infty$.

Define the increasing linear function $h_3(z) = \frac{s z}{q}$.
Let $\tilde{c}$ be such that $r < \frac{s p}{q}$.
Then $c = h_2^{-1}(h_3(h_1(a))) = h_2^{-1} \big( \frac{s p}{q} \big)$ is well-defined,
and $h = h_2^{-1} \circ h_3 \circ h_1 : (a,b) \to (c,d)$ is an increasing diffeomorphism that conjugates $f$ to $g$.
\end{proof}

Finally we provide an analogous result for decreasing maps.

\begin{lemma}[matching points, decreasing case]
Let $f : (a,b) \to \mathbb{R}$ be $C^2$ with $\frac{d f}{d x}(x) < 0$ for all $x \in (a,b)$, and suppose $b < f(a)$.
Suppose $x^* \in (a,b)$ is a fixed point of $f$ with $\frac{d f}{d x}(x^*) = \lambda \ne -1$ and $f^2$ has no other fixed points.
Let $g : (c,d) \to \mathbb{R}$ be $C^2$ with $\frac{d g}{d y}(y) < 0$ for all $y \in (c,d)$, and suppose $d < g(c)$.
Suppose $y^* \in (c,d)$ is a fixed point of $g$ with $\frac{d g}{d y}(y^*) = \lambda$ and $g^2$ has no other fixed points.
Then there exists an increasing diffeomorphism
$h : \left( f^{-1}(b), b \right) \to \left( g^{-1}(d), d \right)$ that conjugates $f$ to $g$.
\label{le:matchTwoPointsDec}
\end{lemma}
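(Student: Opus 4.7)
The plan is to mirror the proof of Lemma \ref{le:matchTwoPointsInc}, but invoking Lemma \ref{le:globalLinearisationDec} in place of Lemma \ref{le:globalLinearisationInc}. The key simplification compared with the increasing case is that here the conjugacy relation automatically pins down the shape of the image intervals produced by the linearising conjugacies, so the auxiliary parameter $\tilde{c}$ used previously is not needed.

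First, after translating so that $x^*=0$ and $y^*=0$, I would apply Lemma \ref{le:globalLinearisationDec} to $f$ and to $g$ separately to obtain increasing $C^1$ diffeomorphisms $h_1 : (f^{-1}(b),b) \to (p,q)$ and $h_2 : (g^{-1}(d),d) \to (r,s)$ conjugating $f$ and $g$, respectively, to the common linear map $\ell(z) = \lambda z$, where $(p,q)$ and $(r,s)$ are open intervals containing $0$.

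Second, I would read off the left endpoint of each image interval from the conjugacy relation at the boundary. As $x \to f^{-1}(b)^+$, monotonicity of $f$ gives $f(x) \to b^-$, so passing to the limit in $h_1(f(x)) = \ell(h_1(x))$ yields $q = \lambda p$, and hence $p = q/\lambda$. The identical argument applied to $g$ gives $r = s/\lambda$. Since $\lambda < 0$, in both cases the negative endpoint is determined by the positive one.

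Third, with the image intervals now in the explicit form $(q/\lambda, q)$ and $(s/\lambda, s)$, I would set $h_3(z) = (s/q)\,z$. This is an increasing linear map that commutes with $\ell$ and sends $(q/\lambda, q)$ bijectively onto $(s/\lambda, s)$. Consequently $h = h_2^{-1} \circ h_3 \circ h_1$ is an increasing diffeomorphism from $(f^{-1}(b), b)$ onto $(g^{-1}(d), d)$ that conjugates $f$ to $g$, as required. The only non-mechanical step is the boundary computation in the second stage; it is precisely what allows the decreasing case to proceed without a free parameter, and I do not expect any further obstacle.
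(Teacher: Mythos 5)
Your proposal is correct and follows essentially the same route as the paper: apply Lemma \ref{le:globalLinearisationDec} to $f$ and to $g$, then compose with the linear rescaling $h_3(z)=\frac{s z}{q}$ to get $h=h_2^{-1}\circ h_3\circ h_1$. Your boundary-limit computation showing the image intervals have the form $\left( \frac{q}{\lambda}, q \right)$ and $\left( \frac{s}{\lambda}, s \right)$ just makes explicit what the paper takes directly from the construction in Lemma \ref{le:globalLinearisationDec}, so there is no substantive difference.
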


\begin{proof}
Define the linear map $\ell(z) = \lambda z$.
By Lemma \ref{le:globalLinearisationDec} to applied $f$ 
there exists $q > 0$ and an increasing diffeomorphism $h_1 : (f^{-1}(b),b) \to (\ell^{-1}(q),q)$
that conjugates $f$ to $\ell$.
Similarly there exists $s > 0$ and an increasing diffeomorphism $h_2 : (g^{-1}(d),d) \to (\ell^{-1}(s),s)$
that conjugates $g$ to $\ell$.
Define the increasing linear function $h_3(z) = \frac{s z}{q}$.
Then $h = h_2^{-1} \circ h_3 \circ h_1 : (f^{-1}(b),b) \to (g^{-1}(d),d)$ is an increasing diffeomorphism
that conjugates $f$ to $g$.
\end{proof}

\subsection{A bound on the range of the conjugacy function}
\label{sub:hPrime}

Consider again the scenario of Lemma \ref{le:matchTwoPointsInc}
with $\lambda \in (0,1)$ shown in Fig.~\ref{fig:pwsDiffMatching}.
The points $x^*$, $b$, $y^*$, and $d$ are known {\em a priori};
Lemma \ref{le:matchTwoPointsInc} shows we can construct $h$ that differentiably conjugates $f$ to $g$ with $h(x^*) = y^*$ and $h(b) = d$.

In our later applications of Lemma \ref{le:matchTwoPointsInc}
the difference between $b$ and $x^*$ and the difference between $d$ and $y^*$ are small,
so $f$ on $(x^*,b)$ and $g$ on $(y^*,d)$ are usefully approximated by linear functions, both with slope $\lambda$.
Consequently we can design $h$ to be roughly linear,
in which case its slope anywhere should be approximately $\frac{d - y^*}{b - x^*}$, call this ratio $\chi$.
So for any $x \in (x^*,b)$, and also any $x \in (a,x^*)$ if $x^* - a$ is suitably small,
the ratio $\frac{h(x) - y^*}{x - x^*}$ should be approximately $\chi$.

Lemma \ref{le:bound} (below) makes this claim precise.
Our proof, given in Appendix \ref{app:proof},
is based on the fact for any $x \in (a,b)$ we have $f^n(x) \to x^*$ as $n \to \infty$,
thus $\frac{h(f^n(x)) - x^*}{f^n(x) - x^*} \to h'(x^*)$ as $n \to \infty$.
A bound on the value of $f^n(x)$ in terms of the value of $x$ is obtained from a linear approximation to $f$ using induction on $n$ (Lemma \ref{le:fn}a),
while a similar bound on the value of $h(f^n(x)) = g^n(h(x))$ in terms of the value of $h(x)$ is obtained from a linear approximation to $g$.
Then with $x = b$ we obtain $h'(x^*) \approx \chi$ (with an error proportional to the size of the quadratic terms in $f$ and $g$).

Then we use a linear approximation to $g^{-1}$
to bound the value of $h(x)$ in terms of the value of $g^n(h(x))$ (Lemma \ref{le:fn}b).
This enables us to show $\frac{h(x) - y^*}{x - x^*} \approx h'(x^*)$.
The key conclusion is that the right-hand side of \eqref{eq:bound}
converges to $0$ as the interval $(a,b)$ is shrunk to the point $x^*$
as this enables us later to obtain \eqref{eq:theBound} for any $\delta > 0$.

For brevity Lemma \ref{le:bound} is just stated for the increasing case with $\lambda \in (0,1)$.
With instead $\lambda > 1$ an analogous result can obtained similarly
and given in terms of a bound on the second derivatives of $f^{-1}$ and $g^{-1}$.
With instead $\lambda < 0$ the same type of result holds;
indeed the required calculations are almost identical using $|\lambda|$ in place of $\lambda$.
In Lemma \ref{le:bound} we assume $\chi \le \frac{3}{2}$ to simplify the right-hand side of \eqref{eq:bound}
and because our later applications of Lemma \ref{eq:bound} use $\chi \approx 1$.

\begin{lemma}
Let $f$, $g$, and $h$ be as in Lemma \ref{le:matchTwoPointsInc} with $\lambda \in (0,1)$.
Let $K > 0$ be such that $|f''(x)| \le K$ for all $x \in (a,b)$ and $|g''(y)| \le K$ for $y \in (c,d)$.
Let $r = \frac{\lambda(1-\lambda)}{10 K}$ and suppose ${\rm max}(x^*-a,b-x^*) \le r$.
Let $\chi = \frac{d - y^*}{b - x^*}$ and suppose $\chi \le \frac{3}{2}$.
Then for all $x \in (a,b) \setminus \{ x^* \}$,
\begin{equation}
\left| \frac{h(x) - y^*}{x - x^*} - \chi \right| < \frac{3}{2 r} \big( 4 |x - x^*| + b - x^* \big).
\label{eq:bound}
\end{equation}
\label{le:bound}
\end{lemma}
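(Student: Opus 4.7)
The plan is to make the limit $(h(f^n(x))-y^*)/(f^n(x)-x^*) \to h'(x^*)$ quantitative. After translating coordinates so $x^* = y^* = 0$ (hence $f(0)=g(0)=0$, $f'(0)=g'(0)=\lambda$, $h(0)=0$), the conjugacy iterated reads $h(f^n(x)) = g^n(h(x))$ for all $n \ge 0$. Since $\lambda \in (0,1)$ both sides tend to $0$, and the task is to track the two convergences $f^n(x) \to 0$ and $g^n(h(x)) \to 0$ with constants sharp enough to produce the right-hand side of \eqref{eq:bound}.

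The first step, which I would package as the auxiliary Lemma \ref{le:fn} referred to in the text, is the pair of additive estimates
\begin{equation*}
f^n(x) = \lambda^n x + R_n(x), \qquad g^n(y) = \lambda^n y + S_n(y),
\end{equation*}
with $|R_n(x)|, |S_n(y)|$ bounded by a constant times $\lambda^n x^2/(1-\lambda)$, valid whenever $|x|, |y| \le r$. The proof is induction on $n$: Taylor's theorem gives $|f(z) - \lambda z| \le \tfrac{K}{2}z^2$, and the telescoping identity $R_n(x) = \lambda R_{n-1}(x) + \bigl[f(f^{n-1}(x)) - \lambda f^{n-1}(x)\bigr]$ produces a linear recursion whose solution is a geometric sum of the stated size. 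The radius $r = \lambda(1-\lambda)/(10K)$ is chosen so the cumulative relative error $R_n(x)/(\lambda^n x)$ stays bounded by a small constant (roughly $\tfrac{1}{10}$), which in particular keeps every iterate inside $(-r,r)$. Part (b) of the auxiliary lemma is the dual statement: given $g^n(y)$, bound $y$ via the analogous estimate for $g^{-n}$, valid because $g^{-1}(z) = z/\lambda + \cO(z^2)$ near $0$.

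Next I calibrate $h'(0)$ against $\chi$ using $x = b$. Since $h(b) = d$ and, in the translated coordinates, $\chi = d/b$,
\begin{equation*}
\frac{g^n(d)}{f^n(b)} = \chi \cdot \frac{1 + S_n(d)/(\lambda^n d)}{1 + R_n(b)/(\lambda^n b)} = \chi + \cO\bigl((b-x^*)/r\bigr),
\end{equation*}
where the hypothesis $\chi \le \tfrac{3}{2}$ absorbs asymmetries in the $d/b$ ratio. Letting $n \to \infty$ the left-hand side tends to $h'(0)$, yielding $|h'(0) - \chi|$ bounded by a multiple of $(b - x^*)/r$.

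For arbitrary $x \in (a,b) \setminus \{0\}$ the same argument with $x$ in place of $b$, this time using part (b) of the auxiliary lemma to recover $h(x)$ from $g^n(h(x))$, yields $|h(x)/x - h'(0)|$ controlled by a multiple of $|x|/r$. Combining the two estimates by the triangle inequality gives $|h(x)/x - \chi|$ bounded by the right-hand side of \eqref{eq:bound}. The main obstacle is the constant bookkeeping: producing the exact coefficient $\tfrac{3}{2r}$ and the specific split $4|x-x^*| + (b-x^*)$ requires keeping each geometric series in the induction bounded by a fixed fraction well below $1$, and carefully separating the two error sources---the $|x-x^*|$ piece coming from the pointwise approximation $h(x)/x \approx h'(0)$, and the $b-x^*$ piece from the calibration $h'(0) \approx \chi$. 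The safety factor of $10$ in the definition of $r$ is precisely what secures these clean coefficients.
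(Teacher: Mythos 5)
Your outline follows essentially the same route as the paper's proof: the auxiliary estimates you describe are exactly Lemma \ref{le:fn} (induction on $n$ using $|f(z)-\lambda z|\le \tfrac{K}{2}z^2$ and the analogous bound for $g^{-1}$), the calibration of $h'(x^*)$ against $\chi$ via $x=b$, $h(b)=d$ is the paper's first step, and the recovery of $h(x)$ from $g^n(h(x))$ via part (b), combined with part (a) applied to $f$ at $x$ and a triangle inequality, is the paper's second step. The only differences are cosmetic — the paper compares $\frac{g^n(h(x))}{f^n(x)}$ with $\frac{g^n(d)}{f^n(b)}$ at a common finite $n$ rather than passing explicitly through $h'(x^*)$, and it carries out the constant bookkeeping (yielding $\frac{6|x-x^*|}{r}+\frac{3(b-x^*)}{2r}$) that you defer.
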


\section{Differentiable conjugacies for piecewise-smooth maps}
\label{sec:construction}

In this section we construct differentiable conjugacies between piecewise-smooth maps $f$ and $g$.
We assume $f$ is defined on a bounded interval, whereas $g$ is defined on the whole of $\mathbb{R}$.
This allows us to construct $h$ on the entire domain of $f$
without having to worry about escaping the domain of $g$ during the course of the construction.

Specifically we let $a < 0$, $b > 0$, and consider
continuous piecewise-$C^2$ maps $f : (a,b) \to \mathbb{R}$ and $g : \mathbb{R} \to \mathbb{R}$ of the form
\begin{align}
f(x) &= \begin{cases}
f_L(x), & x \le 0, \\
f_R(x), & x \ge 0,
\end{cases} &
g(y) &= \begin{cases}
g_L(y), & y \le 0, \\
g_R(y), & y \ge 0.
\end{cases} &
\label{eq:fg}
\end{align}
For $g$ we assume
\begin{equation}
|g(y)| \to \infty ~\text{as}~ y \to \pm \infty
\label{eq:infty}
\end{equation}
so that we can map backwards under $g_L$ and $g_R$ as many times as we like.

Different cases require different assumptions and employ different constructions of the conjugacy function $h$.
In several cases the construction proceeds as follows.
The map $f$ has a fixed point $x_L < 0$ and we apply 
Lemma \ref{le:matchTwoPointsInc} or Lemma \ref{le:matchTwoPointsDec} to the left pieces of the maps, i.e.~$f_L$ and $g_L$.
Specifically we apply the result to $f_L$ on the interval $(a,0)$ and $g_L$ using $d = 0$.
This produces a differentiable conjugacy $h$ with $h(x_L) = y_L$ and $h(0) = 0$.
We then extend the domain of $h$ outwards by applying
the procedure used in the proof of Lemma \ref{le:globalLinearisationInc}.
Since we have enforced $h(0) = 0$, each step in this procedure
uses either $f_L$ and $g_L$, or $f_R$ and $g_R$.
These are smooth, so $h$ is differentiable on the larger domain,
and a simple application of the chain rule
shows that the non-differentiability of $f$ and $g$ at $0$ does not upset the differentiability of $h$
as long as we assume $f$ and $g$ to have the same ratio of lower and upper derivatives at $0$:
\begin{equation}
\frac{f_L'(0)}{f_R'(0)} = \frac{g_L'(0)}{g_R'(0)}.
\label{eq:equalSlopeRatios}
\end{equation}

\subsection{One fixed point and no period-two solutions}
\label{sub:oneFp}

We begin with the simplest scenario that $f$ and $g$ have exactly one fixed point and no period-two solutions.
Within this scenario we distinguish four cases requiring separate constructions.

We start with the case that $f$ is increasing on $(a,b)$.
Fig.~\ref{fig:pwsDiffConjMap_f} provides a sketch of a map $f$
satisfying the conditions of Lemma \ref{le:constructOneInc}
and for which its fixed point $x_L$ is asymptotically stable, i.e.~$f_L'(x_L) < 1$.
The map $g$ has analogous properties on the whole of $\mathbb{R}$.

\begin{lemma}[increasing case]
Consider piecewise-$C^2$ maps $f$ and $g$ \eqref{eq:fg} satisfying \eqref{eq:infty} and \eqref{eq:equalSlopeRatios}.
Suppose $f_L'(x) > 0$ for all $x \in (a,0]$ and $f_R'(x) > 0$ for all $x \in [0,b)$.
Suppose $f$ has a unique fixed point $x_L \in (a,0)$ with $f_L'(x_L) \ne 1$.
Similarly suppose $g_L'(y) > 0$ for all $y \le 0$ and $g_R'(y) > 0$ for all $y \ge 0$.
Suppose $g$ has a unique fixed point $y_L < 0$ with $g_L'(y_L) = f_L'(x_L)$.
Then $f$ on $(a,b)$ is differentiably conjugate to $g$ on some subset of $\mathbb{R}$.
\label{le:constructOneInc}
\end{lemma}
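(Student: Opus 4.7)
The plan is to build $h$ in two stages: first on $(a, 0)$ using Lemma \ref{le:matchTwoPointsInc}, and then extend across $x = 0$ to all of $(a, b)$ using the conjugacy relation $h \circ f = g \circ h$. For stage one I verify that the hypotheses of Lemma \ref{le:matchTwoPointsInc} are met by taking $f_L$ on $(a,0)$ and $g_L$ on $(-\infty,0)$ with $d = 0$ in that lemma: both maps are $C^2$ and increasing, the multipliers $f_L'(x_L)$ and $g_L'(y_L)$ are equal and different from $1$, and $g_L(y) \to -\infty$ as $y \to -\infty$ by \eqref{eq:infty}. The lemma then supplies an increasing $C^1$ diffeomorphism $h_L \colon (a, 0) \to (c_L, 0)$ with $h_L(x_L) = y_L$ that conjugates $f_L$ to $g_L$, and I set $h(0) = 0$ to extend it continuously through the origin.

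For stage two I propagate $h$ to $[0, b)$ by iterating the conjugacy relation, splitting on $\lambda := f_L'(x_L)$. If $\lambda < 1$, then $f_R(x) < x$ on $(0, b)$, so the preimage sequence $x_n := f_R^{-n}(0)$ is increasing with limit $b$ (an accumulation point below $b$ would be a fixed point of $f_R$, contradicting uniqueness); I define $h$ inductively on each $[x_n, x_{n+1}]$ by $h(x) = g_R^{-1}(h(f_R(x)))$. If $\lambda > 1$, then $\tilde{x} := f(0) > 0$ and backward iterates of $f$ contract to $x_L$; I first set $h(y) = g_L(h_L(f_L^{-1}(y)))$ for $y \in (0, \tilde{x}]$ (valid because $f_L^{-1}(y) \in (x_L, 0]$), and then extend outward along the ascending iterates $f_R^n(\tilde{x}) \to b$ via $h(y) = g_R(h(f_R^{-1}(y)))$. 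Each local formula composes $C^2$ ingredients, so $h$ is $C^1$ on $(a,0) \cup (0,b)$ and the conjugacy relation holds by construction at every junction.

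The main obstacle is matching the one-sided derivatives of $h$ at the single point where the non-differentiability of $f$ and $g$ enters. In the stable case this point is $x = 0$ itself: writing $\tilde{x} = f(0)$ and $\tilde{y} = g(0)$, continuity of the conjugacy relation forces $h_L(\tilde{x}) = \tilde{y}$, and differentiating $h_L \circ f_L = g_L \circ h_L$ as $x \to 0^-$ and the extension $h(x) = g_R^{-1}(h_L(f_R(x)))$ as $x \to 0^+$ yields, respectively,
\[
h'(0^-) = h_L'(\tilde{x})\,\frac{f_L'(0)}{g_L'(0)}, \qquad h'(0^+) = h_L'(\tilde{x})\,\frac{f_R'(0)}{g_R'(0)},
\]
which coincide exactly under hypothesis \eqref{eq:equalSlopeRatios}. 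In the unstable case, differentiability at $x = 0$ is automatic because both one-sided derivatives arise from formulas involving only the $L$-pieces; the same slope-ratio identity is instead needed at $y = \tilde{x}$, where the branches $h = g_L \circ h_L \circ f_L^{-1}$ and $h = g_R \circ h \circ f_R^{-1}$ abut. Since the resulting $h$ is strictly increasing, it is a $C^1$ diffeomorphism of $(a, b)$ onto its image in $\mathbb{R}$ satisfying $h \circ f = g \circ h$, as required.
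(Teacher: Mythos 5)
Your proposal is correct and follows essentially the same route as the paper: Lemma \ref{le:matchTwoPointsInc} applied to $f_L$ on $(a,0)$ and $g_L$ with $d=0$, then extension of $h$ through $x=0$ via the functional equation $h = g_R^{-1}\circ h\circ f_R$ (or the corresponding push-forward formulas in the unstable case), with \eqref{eq:equalSlopeRatios} making the one-sided derivatives of $h$ agree at the junction. The paper writes out only the case $f_L'(x_L)<1$ and dispatches the other with a one-line remark about backwards iterates; your explicit treatment of the unstable case, including the correct observation that the slope-ratio condition is then needed at $y=f(0)$ rather than at $0$, simply fills in that remark.
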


\begin{figure}[h!]
\begin{center}
\includegraphics[height=6cm]{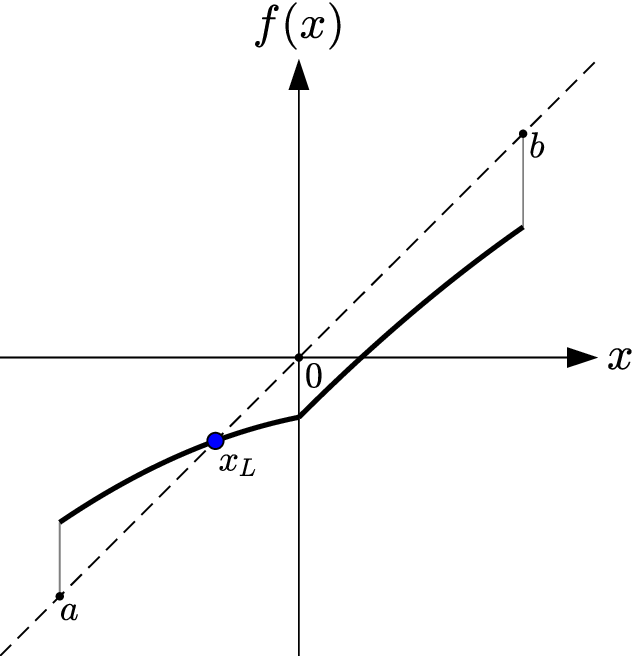} 
\caption{
A piecewise-smooth map $f$ satisfying the conditions of Lemma \ref{le:constructOneInc} with $f_L'(x_L) < 1$.
\label{fig:pwsDiffConjMap_f}
} 
\end{center}
\end{figure}

\begin{proof}
Suppose $f_L'(x_L) \in (0,1)$ (the case $f_L'(x_L) > 1$ can be proved similarly
using backwards iterates instead of forward iterates).
By Lemma \ref{le:matchTwoPointsInc} applied to $f_L$ on the interval $(a,0)$ and $g_L$ using $d = 0$,
there exists $c < y_L$ and an increasing diffeomorphism $h : (a,0) \to (c,0)$ such that
\begin{equation}
h(f_L(x)) = g_L(h(x)),
\label{eq:constructOneIncProof1}
\end{equation}
for all $x \in (a,0)$.
By construction, $\lim_{x \to 0^-} h(x) = 0$, thus $h(f_L(0)) = g_L(0)$ by \eqref{eq:constructOneIncProof1}.
By the continuity of $f$ this is equivalent to $h(f_R(0)) = g_R(0)$.

We now extend the domain of $h$ to $[0,f_R^{-1}(0))$, assuming $f_R^{-1}(0) \le b$ otherwise we only extend it to $[0,b)$.
For all $x \in [0,f_R^{-1}(0))$, define
\begin{equation}
h(x) = g_R^{-1}(h(f_R(x))).
\label{eq:constructOneIncProof2}
\end{equation}
This is well-defined because $f_R(x) \in \left[ f_R(0), 0 \right)$,
so $h(f_R(x))$ is defined with $h(f_R(x)) \ge g_R(0)$ where the inverse of $g_R$ is defined.
Certainly $h$ is continuous and increasing on $[0,f_R^{-1}(0))$,
plus satisfies the conjugacy relation \eqref{eq:conjugacyRelation2} on this interval.
Also it is $C^1$ on $(0,f_R^{-1}(0))$, because $f_R$, $h$, and $g_R^{-1}$ are $C^1$ on the relevant intervals,
so it remains to show $h$ is $C^1$ to $x=0$.
To do this we show that the left and right limits of $h'(x)$ agree at $0$.
From \eqref{eq:constructOneIncProof1}, the chain rule gives
\begin{equation}
h'(x) = \frac{h'(f_L(x)) f_L'(x)}{g_L'(h(x))},
\nonumber
\end{equation}
for $x < 0$. So
\begin{equation}
\lim_{x \to 0^-} h'(x) = \frac{h'(f_L(0)) f_L'(0)}{g_L'(0)},
\label{eq:constructOneIncProof3}
\end{equation}
using $h(0) = 0$.
Similarly \eqref{eq:constructOneIncProof2} gives
\begin{equation}
h'(x) = \frac{h'(f_R(x)) f_R'(x)}{g_R'(h(x))},
\nonumber
\end{equation}
for $x > 0$, so
\begin{equation}
\lim_{x \to 0^+} h'(x) = \frac{h'(f_R(0)) f_R'(0)}{g_R'(0)}.
\label{eq:constructOneIncProof4}
\end{equation}
Notice \eqref{eq:constructOneIncProof3} and \eqref{eq:constructOneIncProof4}
are equal by \eqref{eq:equalSlopeRatios} and the continuity of $f$ at $0$, thus $h$ is $C^1$ at $x=0$.

In the same manner we use \eqref{eq:constructOneIncProof2}
to extend the domain of $h(x)$ to $[f_R^{-1}(0), f_R^{-2}(0))$,
then $[f_R^{-2}(0), f_R^{-3}(0))$, and so on until we reach $b$.
At each step $g_R^{-1}$ is well-defined because we have assumed $g_R(y) \to \infty$ as $y \to \infty$.
The differentiability of $h$ is immediate because this procedure only uses the right pieces of $f$ and $g$,
and we can reach $b$ in a finite number of steps because $f$ has no fixed points between $0$ and $b$.
\end{proof}

Next we treat the decreasing case.
See Fig.~\ref{fig:pwsDiffConjMap_g} for a map $f$ satisfying the conditions of Lemma \ref{le:constructOneDec}.

\begin{lemma}[decreasing case]
Consider piecewise-$C^2$ maps $f$ and $g$ \eqref{eq:fg} satisfying \eqref{eq:infty} and \eqref{eq:equalSlopeRatios}.
Suppose $f_L'(x) < 0$ for all $x \in (a,0]$ and $f_R'(x) < 0$ for all $x \in [0,b)$.
Suppose $x_L < 0$ is a fixed point of $f$ with $f_L'(x_L) \ne -1$
and $f^2$ has no other fixed points.
Also suppose $f(0) > a$ and $f(a) > 0$.
Similarly suppose $g_L'(y) < 0$ for all $y \le 0$ and $g_R'(y) < 0$ for all $y \ge 0$.
Suppose $y_L < 0$ is a fixed point of $g$ with $g_L'(y_L) = f_L'(x_L)$
and $g^2$ has no other fixed points.
Then $f$ on $(a,b)$ is differentiably conjugate to $g$ on some subset of $\mathbb{R}$.
\label{le:constructOneDec}
\end{lemma}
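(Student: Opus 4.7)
The plan is to mirror the proof of Lemma \ref{le:constructOneInc}, with Lemma \ref{le:matchTwoPointsDec} in place of Lemma \ref{le:matchTwoPointsInc}. First, I apply Lemma \ref{le:matchTwoPointsDec} to $f_L$ on $(a,0)$ and to $g_L$ on $(c,0)$, where $c$ is chosen sufficiently negative that $g_L(c) > 0$ (possible by \eqref{eq:infty}, since $g_L$ is decreasing with $g_L(y)\to\infty$ as $y\to-\infty$). The remaining hypotheses of Lemma \ref{le:matchTwoPointsDec} — negative derivative, $0 < f_L(a)$ playing the role of $b < f(a)$, a unique fixed point with $f_L'(x_L) \neq -1$, and no period-two orbits of $f_L^2$ — are inherited directly from the hypotheses of the present lemma (and analogously for $g_L$). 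The output is an increasing diffeomorphism
\[
h : \bigl( f_L^{-1}(0), 0 \bigr) \to \bigl( g_L^{-1}(0), 0 \bigr)
\]
satisfying $h \circ f_L = g_L \circ h$, $h(x_L) = y_L$, and $\lim_{x \to 0^-} h(x) = 0$.

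Next I extend $h$ to all of $(a,b)$ by repeated use of the conjugacy relation. At each stage, given the current domain of $h$, I add new points using either $h(x) = g_L^{-1}(h(f_L(x)))$ or $h(x) = g_R^{-1}(h(f_R(x)))$ (with the branch of $g^{-1}$ chosen so that $h$ remains increasing), or via the forward form $h(f(x)) = g(h(x))$, according to whether the new point lies in the pre- or post-image of the current domain. Because $f$ is strictly decreasing, $f^2$ is increasing, and the hypothesis that $x_L$ is the only fixed point of $f^2$ forces every $f^2$-orbit in $(a,b)$ to be strictly monotone. Consequently, when $|f_L'(x_L)| < 1$ each $x \in (a,b)$ enters the initial domain after finitely many forward iterates and backward recursion defines $h(x)$; when $|f_L'(x_L)| > 1$ the analogous statement holds with backward orbits and forward iteration. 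The assumptions $f(a) > 0$ and $f(0) > a$ ensure that $f_L^{-1}(0)$ and $f(0)$ lie inside $(a,b)$, so the iterative construction does not escape the interval.

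The main obstacle, and the principal novelty relative to Lemma \ref{le:constructOneInc}, is the bookkeeping required each time the extension crosses the kink at $x=0$, since unlike the increasing case the extension may alternate between the left and right pieces. At every such crossing a fresh verification of $C^1$-regularity is needed, but this reduces to the same chain-rule computation as in the proof of Lemma \ref{le:constructOneInc}: the continuity of $f$ and $g$ at 0 together with $h(0)=0$ give $h(f_L(0)) = g_L(0) = g_R(0)$, and differentiating the two conjugacy formulas yields
\[
h'(0^-) = \frac{h'(f_L(0))\,f_L'(0)}{g_L'(0)}, \qquad h'(0^+) = \frac{h'(f_R(0))\,f_R'(0)}{g_R'(0)},
\]
which coincide precisely by the slope-ratio assumption \eqref{eq:equalSlopeRatios}. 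The resulting $h$ is then a differentiable conjugacy between $f$ on $(a,b)$ and $g$ on $h((a,b)) \subset \mathbb{R}$, as required.
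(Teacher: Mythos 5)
Your overall strategy is the same as the paper's: apply Lemma \ref{le:matchTwoPointsDec} to $f_L$ and $g_L$ (using \eqref{eq:infty} so that $g_L^{-1}(0)$ exists) to obtain an increasing diffeomorphism $h : \left( f_L^{-1}(0), 0 \right) \to \left( g_L^{-1}(0), 0 \right)$ with $\lim_{x \to 0^-} h(x) = 0$, then grow the domain outwards by alternately using $h(x) = g_L^{-1}(h(f_L(x)))$ and $h(x) = g_R^{-1}(h(f_R(x)))$, with $C^1$-ness at $x=0$ coming from the chain-rule computation and the slope-ratio condition \eqref{eq:equalSlopeRatios} exactly as in Lemma \ref{le:constructOneInc}, and termination in finitely many steps coming from $x_L$ being the only fixed point of $f^2$. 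All of that matches the paper.

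There is, however, one genuine gap: your assertion that ``the iterative construction does not escape the interval'' does not follow from $f(a) > 0$ and $f(0) > a$, and it is false in general. Those hypotheses say nothing about $f$ near $b$ (or about $f(a^+)$ versus $b$): the lemma allows $f(b^-) < a$, in which case every $x \in \left[ f^{-1}(a), b \right)$ has $f_R(x) \notin (a,b)$, so $h(f_R(x))$ is never defined and the recursion $h(x) = g_R^{-1}(h(f_R(x)))$ cannot be applied there; equally, such points do not ``enter the initial domain after finitely many forward iterates'', since their orbit leaves $(a,b)$ at the first step, so your finite-orbit argument does not cover them. The paper closes exactly this case by observing that, by Definition \ref{df:conj}, the conjugacy relation is only required at points $x$ with $f(x) \in (a,b)$, so on $\left[ f^{-1}(a), b \right)$ (and symmetrically at the other end, should the construction reach $b$ first) one simply extends $h$ by an arbitrary increasing $C^1$ function. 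Your proof needs this additional step (or an equivalent device); with it, the argument is complete and coincides with the paper's.
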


\begin{figure}[h!]
\begin{center}
\includegraphics[height=6cm]{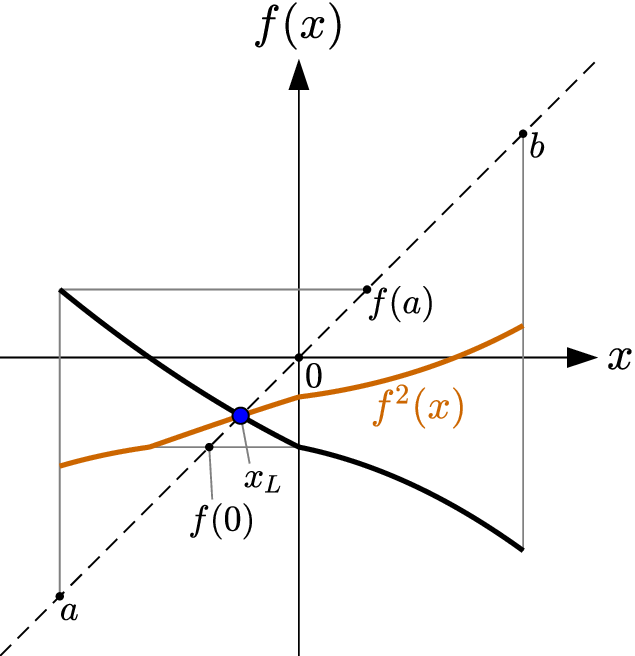} 
\caption{
A piecewise-smooth map $f$ satisfying the conditions of Lemma \ref{le:constructOneDec} with $f_L'(x_L) > -1$.
We also show the second iterate $f^2$.
\label{fig:pwsDiffConjMap_g}
} 
\end{center}
\end{figure}

\begin{proof}
Suppose $f_L'(x_L) \in (-1,0)$ (the case $f_L'(x_L) < -1$ can be proved similarly).
The assumption $f(a) = f_L(a) > 0$
implies $f^{-1}(0) = f_L^{-1}(0)$ is well-defined with $f_L^{-1}(0) > a$, see Fig.~\ref{fig:pwsDiffConjMap_g}.
Also $f(0) = f_L(0) > f_L^{-1}(0)$ due to the assumption on $f^2$.
By Lemma \ref{le:matchTwoPointsDec} applied to $f_L$ and $g_L$
there exists an increasing diffeomorphism $h : \left( f_L^{-1}(0), 0 \right) \to \left( g_L^{-1}(0),0 \right)$
such that \eqref{eq:constructOneIncProof1} holds for all $x \in \left( f_L^{-1}(0), 0 \right)$
(note that $g_L^{-1}(0)$ is well-defined because we have assumed $g_L(y) \to \infty$ as $y \to -\infty$).

We now extend the domain of $h$ to $\left[ 0, f^{-2}(0) \right)$, assuming $f^{-2}(0) \le b$ otherwise we just extend to $b$.
For all $x \in \left[ 0, f^{-2}(0) \right)$ define
\begin{equation}
h(x) = g_R^{-1}(h(f_R(x))).
\label{eq:constructOneDecProof2}
\end{equation}
This is well-defined because as $x$ ranges from $0$ to $f^{-2}(0)$,
$h(f_R(x))$ ranges from $g^{-1}(0)$ to $g(0)$,
and so $h(x)$ ranges from $0$ to $g^{-2}(0)$.
Certainly $h$ is $C^1$ on $\left( 0, f^{-2}(0) \right)$;
it is also $C^1$ at $x = 0$ because as in the previous proof we have \eqref{eq:constructOneIncProof3} and \eqref{eq:constructOneIncProof4}.

In a similar fashion we can extend the domain of $h$ to $\left( f^{-3}(0), f^{-1}(0) \right]$, assuming $f^{-3}(0) > a$ otherwise we just extend to $a$,
by defining $h(x) = g_L^{-1}(h(f_L(x)))$.
Here the differentiability of $h$ at $f^{-1}(0)$ follows from its differentiability at $0$.
We then further extend the domain of $h$ outwards by alternately using the left and right pieces of the maps until reaching $a$ or $b$.
At each step $g_L^{-1}$ and $g_R^{-1}$ are well-defined by \eqref{eq:infty}.
This will be achieved in finitely many steps because $x_L$ is the only fixed point of $f^2$.

Suppose we first reach $a$ (the following arguments work similarly if we first reach $b$).
We then use \eqref{eq:constructOneDecProof2} again to extend
the domain of $h$ to either $b$, if $f(b) \ge a$, or to $f^{-1}(a)$, if $f(b) < a$.
Finally in the latter case we extend $h$ from $f^{-1}(a)$ to $b$
by defining it to be any increasing $C^1$ function.
This is sufficient because $h$ does not need to satisfy the conjugacy relation for $x \in \left[ f^{-1}(a), b \right)$
because $f(x) \notin (a,b)$, see Definition \ref{df:conj}.
In either case we have extended $h$ to $(a,b)$ as required.
\end{proof}

We now treat the non-monotone cases
illustrated in Fig.~\ref{fig:pwsDiffConjMap_h} and \ref{fig:pwsDiffConjMap_i}.

\begin{lemma}[non-monotone, increasing at the fixed point case]
Consider piecewise-$C^2$ maps $f$ and $g$ \eqref{eq:fg} satisfying \eqref{eq:infty} and \eqref{eq:equalSlopeRatios}.
Suppose $f_L'(x) > 0$ for all $x \in (a,0]$ and $f_R'(x) < 0$ for all $x \in [0,b)$.
Suppose $f$ has a unique fixed point $x_L < 0$ with $f_L'(x_L) \in (0,1)$.
Similarly suppose $g_L'(y) > 0$ for all $y \le 0$ and $g_R'(y) < 0$ for all $y \ge 0$.
Suppose $g$ has a unique fixed point $y_L < 0$ with $g_L'(y_L) = f_L'(x_L)$.
Then $f$ on $(a,b)$ is differentiably conjugate to $g$ on some subset of $\mathbb{R}$.
\label{le:constructOneNonMonInc}
\end{lemma}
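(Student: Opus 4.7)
The plan is to mirror the strategy of Lemma \ref{le:constructOneInc}: first construct $h$ on the left piece via Lemma \ref{le:matchTwoPointsInc}, then extend it through $x=0$ using the conjugacy relation applied to $f_R$ and $g_R$. To begin, apply Lemma \ref{le:matchTwoPointsInc} to $f_L$ on $(a,0)$ and $g_L$ on $(-\infty,0)$ with $d=0$, obtaining $c<y_L$ and an increasing diffeomorphism $h:(a,0)\to(c,0)$ that conjugates $f_L$ to $g_L$ and satisfies $h(x_L)=y_L$ and $\lim_{x\to 0^-}h(x)=0$. Set $h(0)=0$ to extend $h$ continuously to $0$.

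The key structural feature driving the right-side extension is that, in the situation depicted in Fig.~\ref{fig:pwsDiffConjMap_h}, $f_L(0)=f_R(0)\in(a,0)$, so $f_R$ maps $[0,b)$ into $(-\infty,f_L(0)]\subset(-\infty,0)$; forward images under $f_R$ therefore land on the left piece, where $h$ is already defined, and a single extension step through $x=0$ suffices on the right, in contrast to the iterative procedure used in Lemma \ref{le:constructOneInc}. Define $b^*=\sup\{x\in[0,b):f_R(x)>a\}$, which is positive since $f_R(0)\in(a,0)$. For $x\in[0,b^*)$, set
\[
h(x)=g_R^{-1}(h(f_R(x))).
\]
This is well-defined because $f_R(x)\in(a,f_L(0)]$ yields $h(f_R(x))\in(c,g_L(0)]=(c,g_R(0)]$, which lies in the range of $g_R$ by \eqref{eq:infty}. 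Since $f_R$ and $g_R^{-1}$ are both decreasing, $h$ is increasing on $[0,b^*)$ and the definition encodes the conjugacy relation on that interval.

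The $C^1$ matching of $h$ at $x=0$ is verified by computing the one-sided limits of $h'$ from the two defining formulas via the chain rule, as in the proof of Lemma \ref{le:constructOneInc}: the left limit is $h'(f_L(0))f_L'(0)/g_L'(0)$ and the right limit is $h'(f_R(0))f_R'(0)/g_R'(0)$, and these agree by continuity of $f$ together with the slope-ratio assumption \eqref{eq:equalSlopeRatios}. Finally, if $b^*<b$ then for $x\in[b^*,b)$ we have $f_R(x)\le a\notin(a,b)$, so by Definition \ref{df:conj} no conjugacy requirement is imposed on $[b^*,b)$; we extend $h$ there as any $C^1$ increasing function matching $h$ and $h'$ at $b^*$. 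The main obstacle is really the $C^1$ matching at $x=0$, which as in the earlier lemmas reduces cleanly to \eqref{eq:equalSlopeRatios}; the added element compared to Lemma \ref{le:constructOneInc} is the smooth continuation past $b^*$ that accommodates the change in monotonicity of $f$ on the right.
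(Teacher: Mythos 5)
Your proposal is correct and follows essentially the same route as the paper's proof: build $h$ on $(a,0)$ via Lemma \ref{le:matchTwoPointsInc} with $d=0$, extend through $x=0$ in a single step by $h(x)=g_R^{-1}(h(f_R(x)))$ (your $b^*$ is exactly the paper's $f_R^{-1}(a)$ when $f(b)<a$), check $C^1$-ness at $0$ via the chain rule and \eqref{eq:equalSlopeRatios}, and finish with an arbitrary increasing $C^1$ continuation where the conjugacy relation imposes no constraint. The only cosmetic difference is that you justify $f(0)\in(a,0)$ by appeal to the figure rather than deriving it from the hypotheses (it follows since $f_L$ is increasing with its unique fixed point $x_L\in(a,0)$ attracting from the right, so $a<x_L<f_L(0)<0$), which is a presentational, not a mathematical, gap.
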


\begin{figure}[h!]
\begin{center}
\includegraphics[height=6cm]{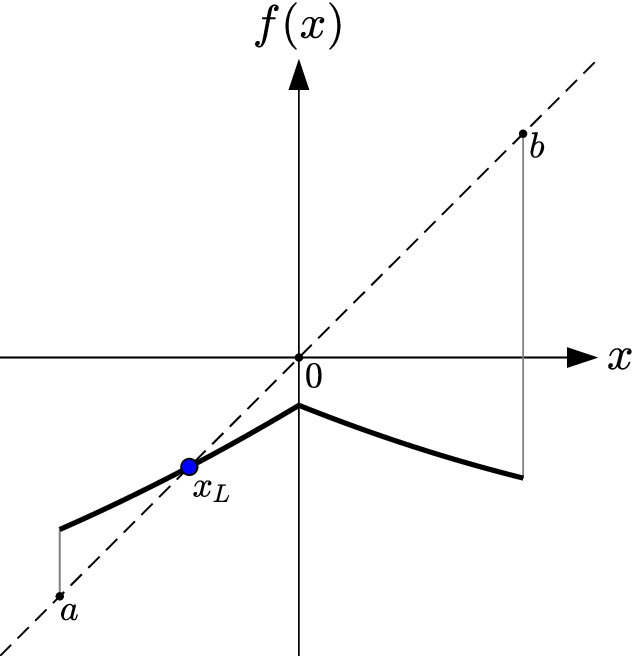} 
\caption{
A piecewise-smooth map $f$ satisfying the conditions of Lemma \ref{le:constructOneNonMonInc}.
\label{fig:pwsDiffConjMap_h}
} 
\end{center}
\end{figure}

\begin{proof}
As in the proof of Lemma \ref{le:constructOneInc}
we construct an increasing diffeomorphism $h : (a,0) \to (c,0)$, for some $c < y_L$,
such that \eqref{eq:constructOneIncProof1} holds for all $x \in (a,0)$.
We then use
\begin{equation}
h(x) = g_R^{-1}(h(f_R(x)))
\label{eq:constructOneNonMonIncProof2}
\end{equation}
to extend the domain of $h$ to $[0,b)$, assuming $f(b) \ge a$ otherwise we only extend to $f_R^{-1}(a)$.
This is well-defined because $f_R(x) \in (a,f(0)]$, where $h$ is already defined,
and $h(f_R(x)) \in (c,g(0)]$, where $g_R^{-1}$ is well-defined.
Clearly $h$ is continuous, increasing, and $C^1$, except possibly at $x=0$.
But it is $C^1$ here too because as in the proof of Lemma \ref{le:constructOneInc} we have
\eqref{eq:constructOneIncProof3} and \eqref{eq:constructOneIncProof4}.

In the case $f(b) > a$ we lastly extend $h$ from $f_R^{-1}(a)$ to $b$
by defining it to be some increasing $C^1$ function (as in the previous proof).
\end{proof}

\begin{lemma}[non-monotone, decreasing at the fixed point case]
Consider piecewise-$C^2$ maps $f$ and $g$ \eqref{eq:fg} satisfying \eqref{eq:infty} and \eqref{eq:equalSlopeRatios}.
Suppose $f_L'(x) < 0$ for all $x \in (a,0]$ and $f_R'(x) > 0$ for all $x \in [0,b)$.
Suppose $x_L < 0$ is a fixed point of $f$ with $f_L'(x_L) \in (-1,0)$
and $f^2$ has no other fixed points.
Also suppose $f(0) > a$ and $f(a) > 0$.
Similarly suppose $g_L'(y) < 0$ for all $y \le 0$ and $g_R'(y) > 0$ for all $y \ge 0$.
Suppose $y_L < 0$ is a fixed point of $g$ with $g_L'(y_L) = f_L'(x_L)$
and $g^2$ has no other fixed points.
Then $f$ on $(a,b)$ is differentiably conjugate to $g$ on some subset of $\mathbb{R}$.
\label{le:constructOneNonMonDec}
\end{lemma}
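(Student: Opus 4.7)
My plan is to mirror the proof of Lemma \ref{le:constructOneNonMonInc} with the roles of increasing and decreasing swapped on the left piece: I would build $h$ near $x_L$ using the decreasing matching-points lemma, extend it across $0$ using the right (increasing) pieces, and iterate outward to cover $(a,b)$.

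First I would apply Lemma \ref{le:matchTwoPointsDec} to $f_L$ on $(a,0)$ (with the ``$b$'' of that lemma taken as $0$) and to $g_L$ with $d=0$. The hypotheses transfer cleanly: $f_L$ is smooth and decreasing, $x_L$ and $y_L$ have the common multiplier $\lambda = f_L'(x_L) \in (-1,0)$, the inequality $b<f(a)$ of that lemma becomes $0<f(a)$ (given), and $f^2$ has $x_L$ as its only fixed point. This yields an increasing diffeomorphism
\[ h : (f_L^{-1}(0),\, 0) \to (g_L^{-1}(0),\, 0) \]
conjugating $f_L$ to $g_L$, with $h(x_L)=y_L$ and $\lim_{x \to 0^-} h(x) = 0$; continuity of $f$ and of $h$ at $0$ then forces $h(f_R(0)) = g_R(0)$.

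Next I would extend $h$ across $0$ to the right by setting $h(x) = g_R^{-1}(h(f_R(x)))$. Because $f_R$ is increasing with $f_R(0)=f(0)$, this composition is well-defined provided $f(0)$ lies in the current left-side domain of $h$; if not, the left-side domain is first widened by iterating $h = g_L^{-1} \circ h \circ f_L$, which is always possible since $g_L^{-1}$ is defined everywhere by \eqref{eq:infty}. The $C^1$-matching at $x=0$ is then verified exactly as in Lemma \ref{le:constructOneInc}: the chain rule gives $\lim_{x \to 0^-} h'(x) = h'(f_L(0)) f_L'(0) / g_L'(0)$ and $\lim_{x \to 0^+} h'(x) = h'(f_R(0)) f_R'(0) / g_R'(0)$, and these coincide by the continuity of $f$ at $0$ together with the slope-ratio condition \eqref{eq:equalSlopeRatios}.

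Finally I would enlarge the domain of $h$ step by step, alternately applying $h = g_L^{-1} \circ h \circ f_L$ (on the left) and $h = g_R^{-1} \circ h \circ f_R$ (on the right) to push the domain outward to $a$ and $b$. Each step is well-defined by \eqref{eq:infty}, preserves $C^1$-smoothness because only the smooth pieces $f_L, f_R, g_L, g_R$ and their inverses are composed (and the non-smoothness at $0$ has already been handled), and the process terminates in finitely many steps because backward $f^2$-orbits must exit any compact neighbourhood of $x_L$, a consequence of the hypothesis that $f^2$ has no other fixed points. If one endpoint is reached while the other is not and the conjugacy relation is vacuous on the residual tail (because $f$ sends it outside $(a,b)$), I would take $h$ there to be any increasing $C^1$ function, following the device used at the end of the proof of Lemma \ref{le:constructOneDec}. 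The only substantive obstacle is the $C^1$-matching of $h$ at $x=0$, and this is precisely what the slope-ratio hypothesis \eqref{eq:equalSlopeRatios} is engineered to secure; the remainder is bookkeeping.
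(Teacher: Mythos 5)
Your overall strategy is the paper's own: build $h$ on $\left( f_L^{-1}(0), 0 \right)$ by Lemma \ref{le:matchTwoPointsDec} applied to $f_L$ and $g_L$ with $d=0$, push it across $0$ via $h = g_R^{-1} \circ h \circ f_R$, verify $C^1$-ness at $0$ exactly as in \eqref{eq:constructOneIncProof3}--\eqref{eq:constructOneIncProof4} using \eqref{eq:equalSlopeRatios}, extend outward to $a$ and $b$, and fill any residual tail (where the conjugacy relation is vacuous) with an arbitrary increasing $C^1$ function.

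There is, however, one clause that does not work as written. You say the extension across $0$ is well-defined ``provided $f(0)$ lies in the current left-side domain of $h$; if not, the left-side domain is first widened by iterating $h = g_L^{-1} \circ h \circ f_L$.'' That widening is circular: for $x < f_L^{-1}(0)$ the decreasing map $f_L$ sends $x$ to a point $f_L(x) > 0$, where $h$ has not yet been defined, so this step presupposes exactly the rightward extension you are trying to enable. The correct resolution is that the contingency never arises: since $x_L$ is the only fixed point of $f^2$ and $\left( f^2 \right)'(x_L) = f_L'(x_L)^2 \in (0,1)$, continuity gives $f^2(x) < x$ for all $x \in (x_L,0]$, so $f^2(0) = f_L(f(0)) < 0$ and hence $f(0) > f_L^{-1}(0)$, i.e.\ $f(0)$ automatically lies in $\left( f_L^{-1}(0), 0 \right)$. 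This is precisely the remark ``$f(0) = f_L(0) > f_L^{-1}(0)$ due to the assumption on $f^2$'' made in the proof of Lemma \ref{le:constructOneDec}, which the paper reuses here. A further small simplification compared with your alternating scheme: after the rightward extension reaches $b$ (or the point beyond which the relation is vacuous), a single leftward step suffices, because $f_L$ maps $\left( a, f_L^{-1}(0) \right]$ into $[0,f(a))$, which is already covered when $f(a) \le b$; only if $f(a) > b$ does one truncate at $f_L^{-1}(b)$ and finish with an arbitrary increasing $C^1$ extension, as you describe. With the circular clause replaced by the $f^2$ argument, your proof matches the paper's.
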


\begin{figure}[h!]
\begin{center}
\includegraphics[height=6cm]{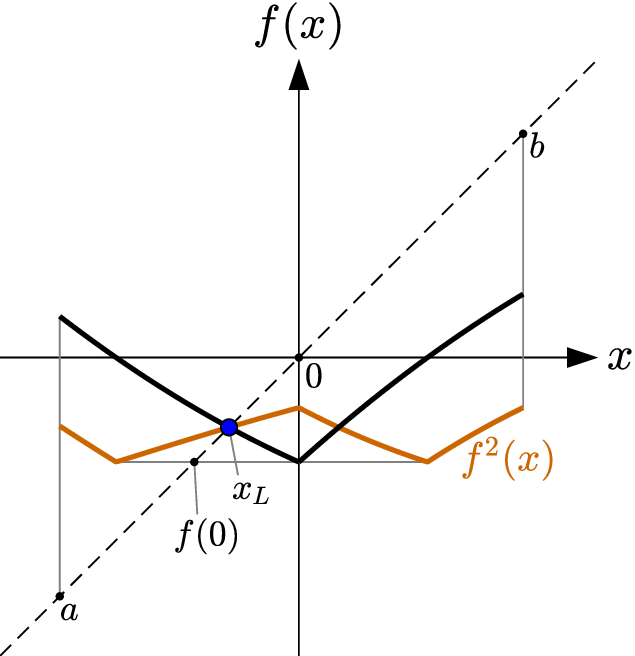} 
\caption{
A piecewise-smooth map $f$ satisfying the conditions of Lemma \ref{le:constructOneNonMonDec} and its second iterate $f^2$.
\label{fig:pwsDiffConjMap_i}
} 
\end{center}
\end{figure}

\begin{proof}
As in the proof of Lemma \ref{le:constructOneDec}
we construct an increasing diffeomorphism $h : \left( f_L^{-1}(0), 0 \right) \to \left( g_L^{-1}(0), 0 \right)$
such that \eqref{eq:constructOneIncProof1} holds for all $x \in \left( f_L^{-1}(0), 0 \right)$.
We then use
\begin{equation}
h(x) = g_R^{-1}(h(f_R(x)))
\label{eq:constructOneNonMonDecProof2}
\end{equation}
to extend the domain of $h$ to $\left[ 0, f_R^{-1}(0) \right)$, assuming $f(b) > 0$ otherwise we only extend it to $b$.
This is well-defined
with $h(x)$ ranging from $0$ to $g_R^{-1}(0)$ as $x$ ranges from $0$ to $f_R^{-1}(0)$.
As above $h$ is continuous, increasing, and $C^1$ because
\eqref{eq:constructOneIncProof3} and \eqref{eq:constructOneIncProof4},
and as in the proof of Lemma \ref{le:constructOneInc}
we can extend the domain of $h$ to the right until reaching $b$.

It remains to extend the domain of $h$ to the left until reaching $a$.
If $f(a) \le b$ we achieve this defining
\begin{equation}
h(x) = g_L^{-1}(h(f_L(x))),
\label{eq:constructOneNonMonDecProof3}
\end{equation}
for all $x \in \left( a, f_L^{-1}(0) \right]$.
If instead $f(a) > b$ we only do this for $x \in \left( f_L^{-1}(b), f_L^{-1}(0) \right]$,
and then finally define $h$ on $\left( a, f_L^{-1}(b) \right]$
to be some increasing $C^1$ function.
\end{proof}

\subsection{Two fixed points}
\label{sub:twoFps}

There are two cases for us to consider in which $f$ and $g$ have two fixed points,
a monotone case, Fig.~\ref{fig:pwsDiffConjMap_j}, and a non-monotone case, Fig.~\ref{fig:pwsDiffConjMap_k}.
In neither case do we obtain a single differentiable conjugacy $h$ on the whole interval $(a,b)$
because we cannot in general extend a conjugacy through a fixed point while maintaining differentiability.

\begin{lemma}[monotone case]
Consider piecewise-$C^2$ maps $f$ and $g$ \eqref{eq:fg} satisfying \eqref{eq:infty} and \eqref{eq:equalSlopeRatios}.
Suppose $f_L'(x) > 0$ for all $x \in (a,0]$ and $f_R'(x) > 0$ for all $x \in [0,b)$.
Suppose $f$ has exactly two fixed points, $x_L < 0$ with $f_L'(x_L) > 1$,
and $x_R > 0$ with $f_R'(x_R) \in (0,1)$.
Similarly suppose $g_L'(y) > 0$ for all $y \le 0$ and $g_R'(y) > 0$ for all $y \ge 0$.
Suppose $g$ has exactly two fixed points, $y_L < 0$ with $g_L'(y_L) = f_L'(x_L)$,
and $y_R > 0$ with $g_R'(y_R) = f_R'(x_R)$.
Then $f$ on $(a,x_R)$ is differentiably conjugate to $g$ on $(c,y_R)$, for some $c < y_L$,
and $f$ on $(x_L,b)$ is differentiably conjugate to $g$ on $(y_L,d)$, for some $d > y_R$.
\label{le:constructTwoMon}
\end{lemma}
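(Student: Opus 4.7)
The plan is to construct the two differentiable conjugacies separately, each following the template used throughout Section \ref{sec:construction}: anchor $h$ at one of the fixed points using Lemma \ref{le:matchTwoPointsInc}, then propagate $h$ across $x = 0$ by iterating the conjugacy relation as in the proof of Lemma \ref{le:globalLinearisationInc}. Because $x_L$ is unstable for $f_L$ and $x_R$ is attracting for $f_R$, the two conjugacies are natural mirror images of each other: the first is anchored at $x_L$ and extended rightward, the second is anchored at $x_R$ and extended leftward.

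For the first conjugacy $h : (a,x_R) \to (c,y_R)$ I would apply Lemma \ref{le:matchTwoPointsInc} to $f_L$ on $(a,0)$ and a smooth increasing extension of $g_L$ to $(-\infty,0)$, taking $d=0$, to obtain an increasing diffeomorphism $h_L : (a,0) \to (c,0)$ with $h_L(x_L)=y_L$ and $h_L(0^-)=0$. Set $h(0)=0$. Because $f_L'(x_L)>1$ forces $f_L(0)>0$, and continuity of $f$ combined with strict monotonicity of $f_R$ gives $f_L(0)=f_R(0)<x_R$, the next step is to define $h$ on $(0,f_L(0)]$ by the forced rule $h(y) = g_L(h_L(f_L^{-1}(y)))$, since the unique preimage of such $y$ under $f$ is $f_L^{-1}(y) \in (\xi_L,0)$ on the $L$-branch (with $\xi_L := f_L^{-1}(0)$). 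One then extends iteratively by $h(y) = g_R(h(f_R^{-1}(y)))$ to the successive intervals $(f_L(0),f_R(f_L(0))]$, $(f_R(f_L(0)),f_R^2(f_L(0))]$, and so on; since $x_R$ attracts and $f_R(x)>x$ on $(0,x_R)$, the sequence $f_R^n(f_L(0))$ increases monotonically to $x_R$, so the extension eventually covers all of $(0,x_R)$ and $h(x) \to y_R$ as $x \to x_R^-$.

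For the second conjugacy $h : (x_L,b) \to (y_L,d)$ I would perform the mirror construction. The less routine step is the initial one: producing an increasing diffeomorphism $h : (0,b) \to (0,d)$ which conjugates $f_R$ to $g_R$, sends $x_R$ to $y_R$, and satisfies $h(0^+)=0$. Lemma \ref{le:matchTwoPointsInc} directly matches only the upper endpoint $b\mapsto d$, so instead I would apply Lemma \ref{le:globalLinearisationInc} to each of the shifted maps $\hat{f}(u)=f_R(u+x_R)-x_R$ on $(-x_R,b-x_R)$ and $\hat{g}(v)=g_R(v+y_R)-y_R$ on a bounded subinterval of $(-y_R,\infty)$, obtaining conjugacies to the common linear map $z\mapsto \lambda z$ with $\lambda = f_R'(x_R)=g_R'(y_R)$, and then compose them via a linear rescaling in the linearised coordinate chosen so that the left endpoints of the two image intervals align at $-y_R$, forcing $h(0^+)=0$. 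I would then extend $h$ leftward through $x=0$ via $h(x)=g_L^{-1}(h(f_L(x)))$ on $(\xi_L,0]$, and iteratively on $(f_L^{-1}(\xi_L),\xi_L]$, etc., reaching $x_L$ in the limit because $f_L^{-n}(\xi_L)$ decreases monotonically to $x_L$ under the contracting action of $f_L^{-1}$ at the repelling fixed point.

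At every step of each construction continuity and strict monotonicity of $h$ are preserved, and $h$ is $C^1$ on each open subinterval produced by the induction by the chain rule. The one substantive check is $C^1$ across the single junction at which the extension formula first switches from the $L$-branch to the $R$-branch: at $y=f_L(0)$ in the first construction and at $x=0$ in the second. Differentiating the two one-sided conjugacy relations $h(f_L(x))=g_L(h(x))$ and $h(f_R(x))=g_R(h(x))$, evaluating at the junction, and using that $f_L(0)=f_R(0)>0$ together with the fact that $h$ is already $C^1$ at this common image value, the equality of the one-sided derivatives reduces exactly to \eqref{eq:equalSlopeRatios}; differentiability at every subsequent junction is then inherited from the previous step through the conjugacy relation. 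The main obstacle, in my view, is the initial normalisation $h(0^+)=0$ for the second construction, for which a lower-endpoint version of Lemma \ref{le:matchTwoPointsInc} is not available and the rescaling argument above is required; beyond that, everything is a controlled adaptation of the arguments used for Lemmas \ref{le:constructOneInc}--\ref{le:constructOneNonMonDec}.
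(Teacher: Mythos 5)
Your construction is essentially the paper's: anchor a conjugacy at one fixed point via the matching-points lemma, push it through $x=0$ by iterating the conjugacy relation, and use \eqref{eq:equalSlopeRatios} at the junction where the formula switches branches; the paper writes out only the second conjugacy and obtains the first ``similarly using backwards iterates''. The one place you diverge is the anchoring step with $h(0^+)=0$, and here your stated ``main obstacle'' is not actually an obstacle: a lower-endpoint version of Lemma \ref{le:matchTwoPointsInc} \emph{is} available, simply by applying that lemma under the reflection $x \mapsto -x$ to $f_R$ on $(0,b)$ and $g_R$ with $d=0$ (the hypothesis $g(y)\to-\infty$ as $y\to-\infty$ becomes $g_R(y)\to+\infty$ as $y\to+\infty$, which \eqref{eq:infty} guarantees); this is exactly what the paper does. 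Your substitute via Lemma \ref{le:globalLinearisationInc} amounts to re-proving that reflected lemma in-line, and as written it omits one step: after fixing the rescaling by aligning the left endpoints of the linearised images (say $h_3(p)=r$ with $p,r<0$), you must also ensure $h_3$ maps the whole range $(p,q)$ of the $f_R$-linearisation into the range $(r,s)$ of the $g_R$-linearisation, i.e.\ $rq/p\le s$; this requires choosing the bounded subinterval of $(-y_R,\infty)$ large enough, using \eqref{eq:infty} to guarantee $s$ can be made arbitrarily large, in direct analogy with the choice of $\tilde{c}$ so that $r<\frac{sp}{q}$ in the proof of Lemma \ref{le:matchTwoPointsInc}. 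With that detail supplied (or replaced by the reflection trick), your argument is complete, and your identification of the substantive $C^1$ junction ($y=f_L(0)$ in the first construction, $x=0$ in the second) together with its reduction to \eqref{eq:equalSlopeRatios} matches the paper's computation.
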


\begin{figure}[h!]
\begin{center}
\includegraphics[height=6cm]{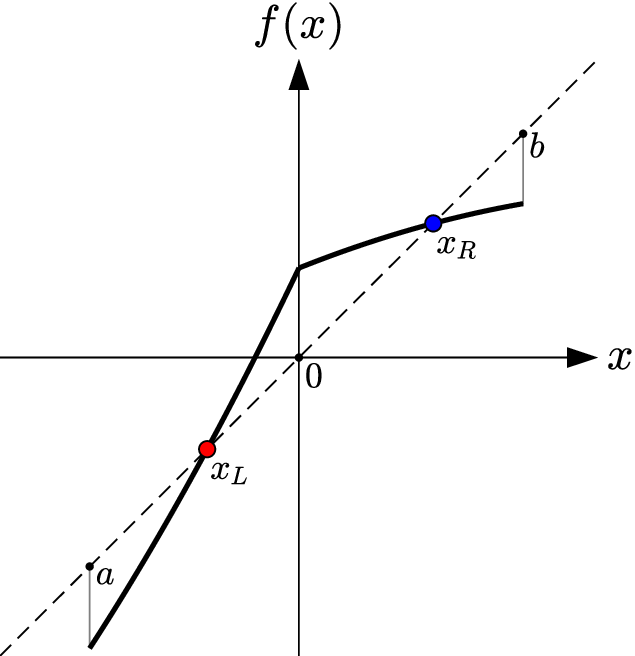} 
\caption{
A piecewise-smooth map $f$ satisfying the conditions of Lemma \ref{le:constructTwoMon}.
\label{fig:pwsDiffConjMap_j}
} 
\end{center}
\end{figure}

\begin{proof}
We just demonstrate the second conjugacy as the first can be constructed similarly
using backwards iterates instead of forwards iterates.
To the map $f_R$ on the interval $(0,b)$ we apply Lemma \ref{le:matchTwoPointsInc},
except with the reverse orientation (i.e.~$x \mapsto -x$).
This shows there exists $d > y_R$ and an increasing diffeomorphism $h : (0,b) \to (0,d)$ such that
\begin{equation}
h(f_R(x)) = g_R(h(x)),
\label{eq:constructTwoMonProof1}
\end{equation}
for all $x \in (0,b)$.
We now use
\begin{equation}
h(x) = g_L^{-1}(h(f_L(x))),
\label{eq:constructTwoMonProof2}
\end{equation}
to extend the domain of $h$ to $\left( f_L^{-1}(0), 0 \right]$,
then $\left( f_L^{-2}(0), f_L^{-1}(0) \right]$, and so on, indefinitely.
As above this produces a differentiable function $h$,
and since $f_L^{-n}(0) \to x_L$ and $g_L^{-n}(0) \to y_L$,
the result is a diffeomorphism $h : (x_L,b) \to (y_L,d)$
satisfying the conjugacy relation \eqref{eq:conjugacyRelation2}.
\end{proof}

\begin{lemma}[non-monotone case]
Consider piecewise-$C^2$ maps $f$ and $g$ \eqref{eq:fg} satisfying \eqref{eq:infty} and \eqref{eq:equalSlopeRatios}.
Suppose $f_L'(x) > 0$ for all $x \in (a,0]$ and $f_R'(x) < 0$ for all $x \in [0,b)$.
Suppose $f$ has exactly two fixed points, $x_L < 0$ with $f_L'(x_L) > 1$,
and $x_R > 0$ with $f_R'(x_R) \in (0,1)$,
and $f^2$ has no other fixed points.
Also suppose $f(b) < x_L$.
Similarly suppose $g_L'(y) > 0$ for all $y \le 0$ and $g_R'(y) < 0$ for all $y \ge 0$.
Suppose $g$ has exactly two fixed points, $y_L < 0$ with $g_L'(y_L) = f_L'(x_L)$,
and $y_R > 0$ with $g_R'(y_R) = f_R'(x_R)$, and $g^2$ has no other fixed points.
Then $f$ on $(a,0)$ is differentiably conjugate to $g$ on $(c,0)$, for some $c > y_L$,
and $f$ on $\left( x_L, f_R^{-1}(x_L) \right)$ is differentiably conjugate to $g$ on $\left( y_L, g_R^{-1}(y_L) \right)$.
\label{le:constructTwoNonMon}
\end{lemma}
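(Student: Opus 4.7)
The plan is to construct the two required diffeomorphisms independently, following the templates of the proofs of Lemmas \ref{le:constructOneInc}--\ref{le:constructTwoMon}. For the conjugacy on $(a,0)$, I apply Lemma \ref{le:matchTwoPointsInc} directly to $f_L$ and $g_L$: both are increasing, each has a unique fixed point with matching unstable multiplier, and the required growth hypothesis $g_L(y) \to -\infty$ as $y \to -\infty$ follows from \eqref{eq:infty}. This supplies an increasing diffeomorphism $h : (a,0) \to (c,0)$ for some $c$, satisfying $h(x_L) = y_L$, $\lim_{x \to 0^-} h(x) = 0$, and $h(f_L(x)) = g_L(h(x))$ throughout.

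For the conjugacy on $(x_L, f_R^{-1}(x_L))$, I adapt the strategy of the proof of Lemma \ref{le:constructTwoMon}, but use Lemma \ref{le:matchTwoPointsDec} (rather than Lemma \ref{le:matchTwoPointsInc}) for the initial step because the right piece is now decreasing. I pick $\beta > x_R$ close enough to $x_R$ that $\beta < f_R(0)$ and $f_R^2$ has no fixed points in $(0,\beta)$ other than $x_R$ (both automatic by continuity and the hypothesis on $f^2$), and I pick $\delta$ for $g_R$ analogously. Lemma \ref{le:matchTwoPointsDec} then supplies an increasing diffeomorphism $h : (f_R^{-1}(\beta), \beta) \to (g_R^{-1}(\delta), \delta)$ conjugating $f_R$ to $g_R$, with $h(x_R) = y_R$ forced because the multipliers match. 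I extend the domain of $h$ outward by iteratively applying the rules $h(x) = g_R^{-1}(h(f_R(x)))$ and $h(x) = g_L^{-1}(h(f_L(x)))$ on successive adjacent intervals, exactly as in the earlier proofs. Differentiability of $h$ across $x=0$ is preserved at each step by the chain-rule calculation from the proof of Lemma \ref{le:constructOneInc}: the matching one-sided limits \eqref{eq:constructOneIncProof3} and \eqref{eq:constructOneIncProof4} agree precisely because of \eqref{eq:equalSlopeRatios}.

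The main obstacle is verifying that this iterative extension covers exactly the interval $(x_L, f_R^{-1}(x_L))$ and terminates at its endpoints. On the right, the extension halts at $f_R^{-1}(x_L)$ because $f_R(f_R^{-1}(x_L)) = x_L$ is a fixed point, and the recursive rule cannot be continued across a point mapping to a fixed point while maintaining a differentiable conjugacy. On the left, covering $(x_L, 0)$ requires infinitely many steps under $f_L^{-1}$, but since $x_L$ is a repeller ($f_L'(x_L) > 1$) the sequence $f_L^{-n}(0)$ converges to $x_L$, giving $(x_L, 0)$ in the limit. Because $f_R$ is decreasing, orbits within $(0, f_R^{-1}(x_L))$ oscillate around $x_R$, so successive preimages alternate between the two sides of $x_R$; verifying that they exhaust $(0, f_R^{-1}(x_L))$ requires some symbolic bookkeeping of the kind already used in the proof of Lemma \ref{le:constructOneDec}, and relies on the forward-invariance implied by $f(b) < x_L$ together with the absence of other periodic points of $f^2$ in the region.
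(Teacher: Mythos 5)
Your first conjugacy (Lemma \ref{le:matchTwoPointsInc} applied to $f_L$ and $g_L$) is exactly the paper's argument. The second conjugacy, however, has a genuine gap in how you initialise it. You anchor Lemma \ref{le:matchTwoPointsDec} at arbitrary points $\beta\mapsto\delta$ near the fixed points $x_R$ and $y_R$, and then extend outwards by pullbacks. But the chain-rule argument you invoke for differentiability at $x=0$ (the equality of \eqref{eq:constructOneIncProof3} and \eqref{eq:constructOneIncProof4} via \eqref{eq:equalSlopeRatios}) is only valid when $h(0)=0$: the one-sided limits involve $g_L'(h(0))$ and $g_R'(h(0))$, and the slope-ratio condition is a condition at $y=0$ only. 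With your initialisation nothing forces the extended $h$ to send $0$ to $0$; generically it sends $0$ to some $w\neq 0$, where $g$ is smooth, and then differentiating $h(f(x))=g(h(x))$ from the two sides gives $h'(0^\pm)=h'(f(0))\,f_{L,R}'(0)/g'(w)$, which are incompatible since $f_L'(0)>0>f_R'(0)$ (indeed one of them would be negative while $h$ is increasing). So the extension through the kink fails to be a diffeomorphism unless $\delta$ is tuned so that $h(0)=0$, and your proposal provides no mechanism for that.

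The paper's construction secures this automatically: it applies Lemma \ref{le:matchTwoPointsDec} (with $x\mapsto -x$) to $f_R$ on $\left(0,f_R^{-1}(0)\right)$ and $g_R$ on $\left(0,g_R^{-1}(0)\right)$, so the matched endpoints are the kink points themselves and $h(0)=0$ by construction (this is where the hypotheses that $f^2$ and $g^2$ have no extra fixed points are used). This choice also removes the ``symbolic bookkeeping'' you worry about at the end: the initial conjugacy already covers the whole fundamental domain $\left(0,f_R^{-1}(0)\right)$, after which one extends leftwards to $(x_L,0]$ purely by $f_L$-pullbacks $h=g_L^{-1}\circ h\circ f_L$ on $\left(f_L^{-n-1}(0),f_L^{-n}(0)\right]$ (infinitely many steps, converging to $x_L$ since $f_L'(x_L)>1$), and then a single $f_R$-pullback $h=g_R^{-1}\circ h\circ f_R$ covers $\left[f_R^{-1}(0),f_R^{-1}(x_L)\right)$, where $f_R^{-1}(x_L)$ exists by $f(b)<x_L$ and $g_R^{-1}$ is available by \eqref{eq:infty}. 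I recommend you restructure your second construction to anchor the matching lemma at $0\mapsto 0$ in this way; the rest of your outline then goes through.
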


\begin{figure}[h!]
\begin{center}
\includegraphics[height=6cm]{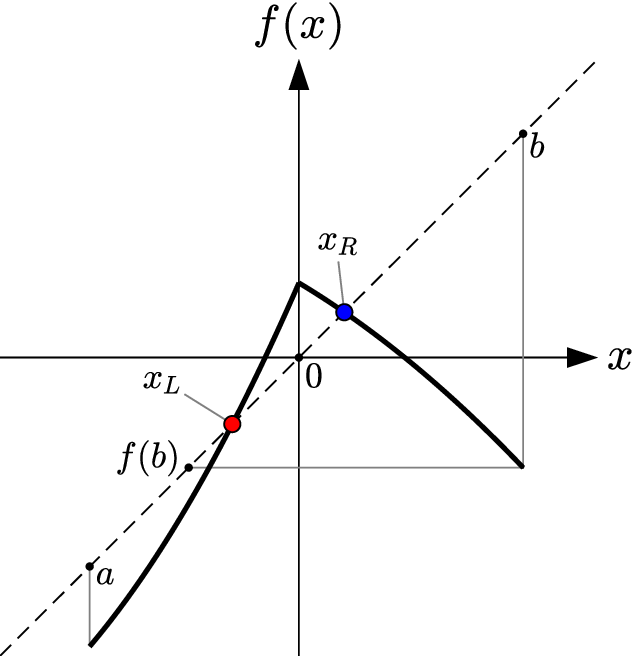} 
\caption{
A piecewise-smooth map $f$ satisfying the conditions of Lemma \ref{le:constructTwoNonMon}.
\label{fig:pwsDiffConjMap_k}
} 
\end{center}
\end{figure}

\begin{proof}
The first conjugacy concerns only $f_L$ and $g_L$, which are smooth and increasing,
so this result follows directly from Lemma \ref{le:matchTwoPointsInc}.

To obtain the second conjugacy
we apply Lemma \ref{le:matchTwoPointsDec} (with $x \mapsto -x$)
to $f_R$ on $\left( 0, f_R^{-1}(0) \right)$ and $g_R$ on $\left( 0, g_R^{-1}(0) \right)$.
This gives an increasing diffeomorphism $h : \left( 0, f_R^{-1}(0) \right) \to \left( 0, g_R^{-1}(0) \right)$
such that \eqref{eq:constructTwoMonProof1} holds for all $x \in \left( 0, f_R^{-1}(0) \right)$.
Notice this application of Lemma \ref{le:matchTwoPointsDec}
relies on the assumption that $f^2$ and $g^2$ have no other fixed points.

We then use \eqref{eq:constructTwoMonProof2}
to extend the domain of $h$ to $\left( f_L^{-1}(0), 0 \right]$,
then $\left( f_L^{-2}(0), f_L^{-1}(0) \right]$, and so on,
resulting in $h$ defined on $\left( x_L, f_R^{-1}(0) \right)$.
Lastly we use
\begin{equation}
h(x) = g_R^{-1}(h(f_R(x))),
\label{eq:constructTwoNonMonProof1}
\end{equation}
to further extend the domain to $\left[ f_R^{-1}(0), f_R^{-1}(x_L) \right)$.
Notice $f_R^{-1}(x_L)$ is well-defined by the assumption $f(b) < x_L$,
and $g_R^{-1}(y)$ is well-defined for all $y \in (y_L, 0)$ by the assumption $g_R(y) \to -\infty$ as $y \to \infty$.
The end result is $h : \left( x_L, f_R^{-1}(x_L) \right) \to \left( y_L, g_R^{-1}(y_L) \right)$
satisfying the conjugacy relation \eqref{eq:conjugacyRelation2} by construction,
and $h$ is a diffeomorphism for the same reasons as in the earlier proofs.
\end{proof}

\subsection{No fixed points}
\label{sub:noFps}

Here we treat cases for which $f$ and $g$ have no fixed points,
Figs.~\ref{fig:pwsDiffConjMap_l} and \ref{fig:pwsDiffConjMap_m}.
These require a different approach to the above scenarios
because there are no fixed points about which we can use Sternberg's linearisation to initiate the construction.
In the monotone case with $f(0) < 0$ we instead pick $z \in (f(0),0)$ and define $h$ on the interval $I = (z,0)$ to be the identity map.
We then propagate $h$ to $f(I)$ via $h(x) = g \left( h \left( f^{-1}(x) \right) \right)$;
importantly $I \cap f(I) = \varnothing$,
and in the gap between $I$ and $f(I)$ we define $h$ by inserting a $C^1$ jump function.
We then propagate $h$ to the entire domain $(a,b)$.

A similar construction was used for smooth maps in \cite{GlSi23}, see Theorem 2.4.
In the piecewise-smooth setting we just also need $h(0) = 0$
so that $h$ is differentiable on $(a,b)$
as a consequence of the assumption \eqref{eq:equalSlopeRatios} of equal ratios of derivatives.

\begin{lemma}[monotone case]
Consider piecewise-$C^2$ maps $f$ and $g$ \eqref{eq:fg} satisfying \eqref{eq:infty} and \eqref{eq:equalSlopeRatios}.
Suppose $f_L'(x) > 0$ for all $x \in (a,0]$ and $f_R'(x) > 0$ for all $x \in [0,b)$.
Suppose $f$ has no fixed points and $a < f(0) < 0$.
Similarly suppose $g_L'(y) > 0$ for all $y \le 0$ and $g_R'(y) > 0$ for all $y \ge 0$.
Suppose $g$ has no fixed points and $g(0) < 0$.
Then $f$ on $(a,b)$ is differentiably conjugate to $g$ on some subset of $\mathbb{R}$.
\label{le:constructZeroMon}
\end{lemma}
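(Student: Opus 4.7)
The plan is to build $h$ on a fundamental domain adjacent to $x=0$ and then propagate it outward by the conjugacy relation. We cannot seed the construction with Sternberg's linearisation as in \S\ref{sub:oneFp} because $f$ has no fixed point. The design principle is to force $h(0)=0$ from the start, so that the equal-slope-ratios assumption \eqref{eq:equalSlopeRatios} carries $C^1$-matching across the non-differentiable point, exactly as it did in Lemmas \ref{le:constructOneInc}--\ref{le:constructOneNonMonDec}.

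First I would pick $z$ with $\max\bigl(f(0),g(0)\bigr) < z < 0$; this is possible because both $f(0)$ and $g(0)$ are negative. On $I = (z,0)$ set $h(x) = x$, with boundary value $h(0) = 0$. The conjugacy relation then forces $h(y) = g\bigl(f_L^{-1}(y)\bigr)$ on $f(I) = (f(z),f(0))$. Since $f(0)<z$, the sets $I$ and $f(I)$ are disjoint with a gap $[f(0),z]$; on this gap I would insert any increasing $C^1$ function matching the already-prescribed boundary values ($g(0)$ at $f(0)$, $z$ at $z$) and one-sided derivatives ($g_L'(0)/f_L'(0)$ at $f(0)^+$, $1$ at $z^-$). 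The condition $z>g(0)$ is precisely what makes such an increasing interpolation available.

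Once $h$ is defined and $C^1$ on $(f(z),0]$, I would extend it to the left by $h(x) = g\bigl(h(f_L^{-1}(x))\bigr)$ on each $f^n(I)$ in turn, and to the right by $h(x) = g_R^{-1}\bigl(h(f_R(x))\bigr)$ on each successive preimage of $I$ under $f_R$. Because $f(x)<x$ on $(a,b)$ and $f$ has no fixed point, the left iterates $f^n(z)$ decrease monotonically to $a$ and exhaust $(a,z)$ in countably many steps; the right iterates climb toward $b$, with the required preimages under $g_R$ existing by \eqref{eq:infty}. At each successive matching point $f^n(z)$ or $f_R^{-n}(0)$, a direct chain-rule calculation shows that continuity and differentiability are preserved because the defining formulas agree on both sides. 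Any leftover subinterval near $b$ on which $f$ is not surjective carries no conjugacy constraint and can be closed off by an arbitrary increasing $C^1$ completion, as in Lemma \ref{le:constructOneDec}.

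The main obstacle is $C^1$-matching at $x=0$. From the left, $h'(0^-)=1$ because $h$ is the identity on $I$. From the right, differentiating $h(x) = g_R^{-1}\bigl(h(f_R(x))\bigr)$ at $x=0$ and using the matched value $h'(f(0)) = g_L'(0)/f_L'(0)$ inherited from the gap yields
\[
h'(0^+) \;=\; h'(f(0))\,\frac{f_R'(0)}{g_R'(0)} \;=\; \frac{g_L'(0)\,f_R'(0)}{f_L'(0)\,g_R'(0)},
\]
which equals $1$ exactly by \eqref{eq:equalSlopeRatios}. This is the one place in the argument where the piecewise-smooth hypothesis plays an essential role, and its role is precisely as expected for a differentiable conjugacy through a non-differentiable point.
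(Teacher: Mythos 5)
Your proposal is correct and follows essentially the same construction as the paper: the identity on a fundamental interval $I=(z,0)$, a $C^1$ gap interpolation on $[f(0),z]$, propagation left by $g_L\circ h\circ f_L^{-1}$ and right by $g_R^{-1}\circ h\circ f_R$, with \eqref{eq:equalSlopeRatios} giving $C^1$-matching at $0$ exactly as in \eqref{eq:constructZeroMonProof20}--\eqref{eq:constructZeroMonProof21}. Your extra requirement $z>\max\bigl(f(0),g(0)\bigr)$ is a sensible refinement (it guarantees the gap interpolation can be chosen increasing), a point the paper leaves implicit.
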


\begin{figure}[h!]
\begin{center}
\includegraphics[height=6cm]{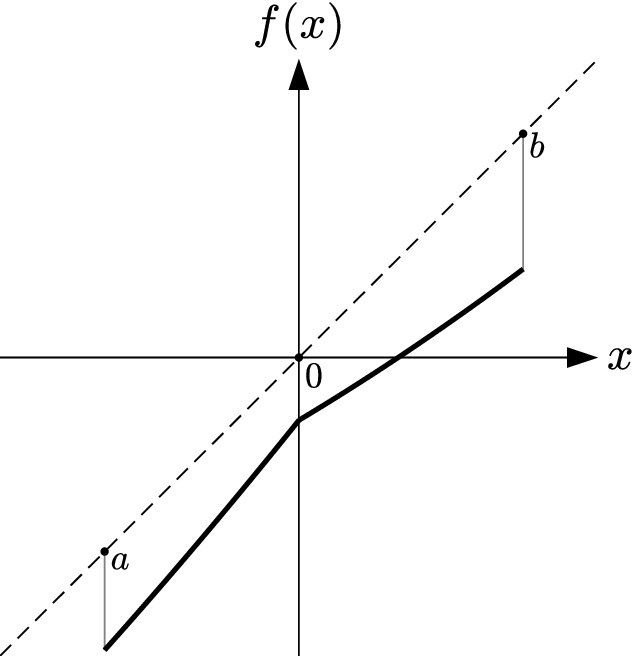} 
\caption{
A piecewise-smooth map $f$ satisfying the conditions of Lemma \ref{le:constructZeroMon}.
\label{fig:pwsDiffConjMap_l}
} 
\end{center}
\end{figure}

\begin{proof}
Let $z \in (f(0),0)$ be such that $f(z) \ge a$.
Let $I = (z,0)$ and define
\begin{equation}
h(x) = x,
\label{eq:constructZeroMonProof1}
\end{equation}
for all $x \in I$.
Next define
\begin{equation}
h(x) = g_L \left( h \left( f_L^{-1}(x) \right) \right),
\label{eq:constructZeroMonProof2}
\end{equation}
for all $x \in f(I) = \left( f(z), f(0) \right)$.
Notice $I \cap f(I) = \varnothing$ and that $h$ is increasing and $C^1$ on $I$ and on $f(I)$.
On the gap $\left[ f(0), z \right]$ between $I$ and $f(I)$
insert a $C^1$ jump function so that $h$ is $C^1$ on $\left( f(z), 0 \right)$.
In particular, by differentiating \eqref{eq:constructZeroMonProof2},
\begin{equation}
h'(f(0))
= \lim_{x \to 0^-} h'(f(x))
= \lim_{x \to 0^-} \frac{g_L'(h(x)) h'(x)}{f_L'(x)}
= \frac{g_L'(0)}{f_L'(0)}.
\label{eq:constructZeroMonProof10}
\end{equation}
where we have used the fact that the derivative of \eqref{eq:constructZeroMonProof1} is $1$.

For each $k = 1,2,\ldots$ until reaching $a$, we define $h(x)$ on $\left( f^{k+1}(z), f^k(z) \right]$
by \eqref{eq:constructZeroMonProof2}.
Notice $h$ is $C^1$ on $(a,0)$ because $h$ is $C^1$ on $\left( f(z), z \right]$ and the
right-hand side of \eqref{eq:constructZeroMonProof2} is smooth.
For each $k = 1,2,\ldots$ until reaching $b$, we define $h(x)$ on $\left[ f^{-(k-1)}(0), f^{-k}(0) \right)$ by
\begin{equation}
h(x) = g_R^{-1} \left( h(f_R(x)) \right).
\label{eq:constructZeroMonProof3}
\end{equation}
The resulting function $h$ defined on $(a,b)$
satisfies the conjugacy relation \eqref{eq:conjugacyRelation2}
for all $x \in (a,b)$ for which $f(x) \in (a,b)$.

It remains to show $h$ is $C^1$ on $[0,b)$,
and this is trivial except at $0$ and its preimages under $f$.
By \eqref{eq:constructZeroMonProof1},
\begin{equation}
\lim_{x \to 0^-} h'(x) = 1,
\label{eq:constructZeroMonProof20}
\end{equation}
whereas by \eqref{eq:constructZeroMonProof3},
\begin{equation}
\lim_{x \to 0^+} h'(x)
= \frac{h'(f(0)) f_R'(0)}{g_R'(0)}
= \frac{f_R'(0) g_L'(0)}{f_L'(0) g_R'(0)} = 1,
\label{eq:constructZeroMonProof21}
\end{equation}
using \eqref{eq:constructZeroMonProof10} and \eqref{eq:equalSlopeRatios}.
Since \eqref{eq:constructZeroMonProof20} and \eqref{eq:constructZeroMonProof21} agree, $h$ is $C^1$ at $0$.
It is also $C^1$ at all $f^{-k}(0) \in (a,b)$ with $k \ge 1$
because the right-hand side of \eqref{eq:constructZeroMonProof3} is smooth.
\end{proof}

\begin{lemma}[non-monotone case]
Consider piecewise-$C^2$ maps $f$ and $g$ \eqref{eq:fg} satisfying \eqref{eq:infty} and \eqref{eq:equalSlopeRatios}.
Suppose $f_L'(x) > 0$ for all $x \in (a,0]$ and $f_R'(x) < 0$ for all $x \in [0,b)$.
Suppose $f$ has no fixed points and $a < f(0) < 0$.
Similarly suppose $g_L'(y) > 0$ for all $y \le 0$ and $g_R'(y) < 0$ for all $y \ge 0$.
Suppose $g$ has no fixed points and $g(0) < 0$.
Then $f$ on $(a,b)$ is differentiably conjugate to $g$ on some subset of $\mathbb{R}$.
\label{le:constructZeroNonMon}
\end{lemma}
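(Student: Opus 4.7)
The plan is to mirror the construction of Lemma \ref{le:constructZeroMon}, adapting only the treatment of the right piece $[0,b)$ to account for $f_R$ being decreasing. The key preliminary observation is that, since $f$ has no fixed points, $f(0)<0$, and both branches are monotone, one has $f(x)<0$ for every $x\in(a,b)$ (the left branch $f_L$ is increasing with $f_L(0)<0$, and the right branch $f_R$ is decreasing with $f_R(0)<0$). Thus the entire image of $f$ lies to the left of $0$, so we can first build $h$ on $(a,0]$ and then pull it across to $[0,b)$ through $f_R$.

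First I would choose $z\in(f(0),0)$ with $f_L(z)\ge a$, set $I=(z,0)$, and put $h(x)=x$ on $I$. I would then define
\[
h(x)=g_L\bigl(h(f_L^{-1}(x))\bigr), \qquad x\in f_L(I)=(f_L(z),f(0)),
\]
insert an increasing $C^1$ jump function on the gap $[f(0),z]$, and iteratively propagate $h$ leftward by the same formula until reaching $a$. This step is word-for-word identical to Lemma \ref{le:constructZeroMon} because only the left, increasing branch of $f$ is involved; in particular the calculation \eqref{eq:constructZeroMonProof10} still yields $h'(f(0))=g_L'(0)/f_L'(0)$.

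Next, I would define $h$ on $[0,b)$ by
\[
h(x)=g_R^{-1}\bigl(h(f_R(x))\bigr),
\]
which is legitimate because $f_R(x)<0$ lies in the region already covered and $g_R^{-1}$ is defined on $(-\infty,g(0)]$ by \eqref{eq:infty}; if $f_R(b)<a$ I would use the formula only on $[0,f_R^{-1}(a))$ and extend $h$ to $[f_R^{-1}(a),b)$ as any increasing $C^1$ function, which is harmless since $f(x)\notin(a,b)$ there. Differentiability of $h$ on $(0,b)$ is immediate from the smoothness of $f_R$, $g_R^{-1}$, and $h|_{(a,0)}$. At $x=0$ one has $h'(0^-)=1$ from $h(x)=x$ on $I$, while differentiating the right-hand formula gives
\[
\lim_{x\to 0^+}h'(x)=\frac{h'(f(0))\,f_R'(0)}{g_R'(0)}=\frac{g_L'(0)\,f_R'(0)}{f_L'(0)\,g_R'(0)}=1
\]
by \eqref{eq:equalSlopeRatios}, so $h$ is $C^1$ at $0$.

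The main subtlety — more a sanity check than a genuine obstacle — is verifying that the matching of derivatives at $x=0$ still succeeds when $f_R'(0)$ and $g_R'(0)$ carry negative signs: their ratio remains positive, and the slope-ratio hypothesis \eqref{eq:equalSlopeRatios} forces $h'(0^+)=h'(0^-)=1$ exactly as in the monotone case. The rest of the argument is a direct transcription of the proof of Lemma \ref{le:constructZeroMon}.
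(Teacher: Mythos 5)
Your proposal is correct and follows essentially the same route as the paper: the paper's proof simply builds $h$ on $(a,0)$ as in Lemma \ref{le:constructZeroMon} and then extends it to $[0,b)$ by the single pullback $h(x)=g_R^{-1}(h(f_R(x)))$ as in Lemma \ref{le:constructOneNonMonInc}, with $C^1$-ness at $0$ coming from \eqref{eq:equalSlopeRatios}, exactly as you describe. You have merely written out explicitly the details (fundamental interval, jump function, well-definedness via $h(f_R(x))\le g(0)$, sign check at $0$) that the paper leaves as references to the earlier proofs.
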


\begin{figure}[h!]
\begin{center}
\includegraphics[height=6cm]{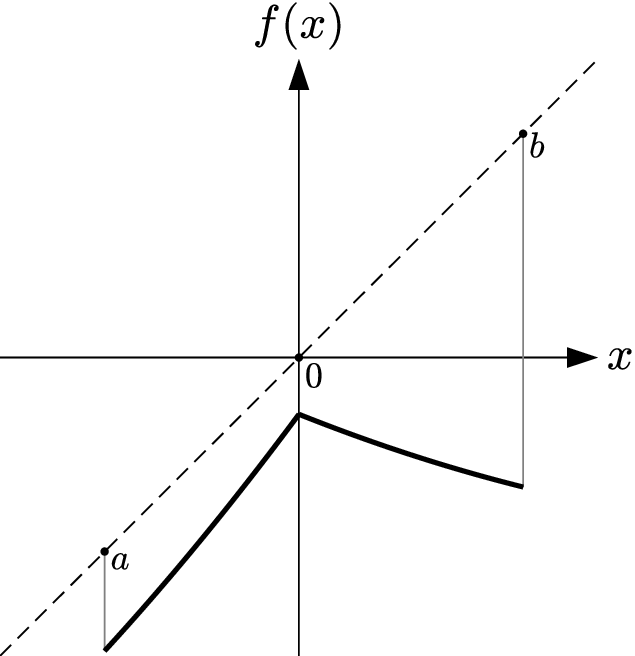} 
\caption{
A piecewise-smooth map $f$ satisfying the conditions of Lemma \ref{le:constructZeroNonMon}.
\label{fig:pwsDiffConjMap_m}
} 
\end{center}
\end{figure}

\begin{proof}
Similar to the previous proof
we can construct an increasing diffeomorphism $h : (a,0) \to (c,0)$, for some $c < 0$,
that conjugates $f$ to $g$.
We can then extend the domain of $h$ to $[0,b)$
as in the proof of Lemma \ref{le:constructOneNonMonInc}
and $h$ is $C^1$ on $(a,b)$ due to \eqref{eq:equalSlopeRatios}.
\end{proof}

\subsection{One fixed point and one period-two solution}
\label{sub:LRcycle}

Here we revisit the case in Lemma \ref{le:constructOneDec} for which both pieces of $f$ are decreasing,
but now suppose $f$ has a period-two solution with one point on each side of $x=0$, Fig.~\ref{fig:pwsDiffConjMap_n}.
This result is used in \S\ref{sec:matching} to prove
Theorem \ref{th:pdLike} for period-doubling-like border-collision bifurcations.

\begin{lemma} 
Consider piecewise-$C^2$ maps $f$ and $g$ \eqref{eq:fg} satisfying \eqref{eq:infty} and \eqref{eq:equalSlopeRatios}.
Suppose $f_L'(x) < 0$ for all $x \in (a,0]$ and $f_R'(x) < 0$ for all $x \in [0,b)$.
Suppose $x_R > 0$ is a fixed point of $f$ with $f_R'(x_R) \ne -1$.
Also suppose 
Suppose $\{ u_L, u_R \}$ is a period-two solution of $f$, with $a < u_L < 0$ and $x_R < u_R < b$
and stability multiplier $\lambda \ne 1$.
Suppose $f^2$ has no fixed points other than $x_L$, $u_L$, and $u_R$.
Similarly suppose $g_L'(y) < 0$ for all $y \le 0$ and $g_R'(y) < 0$ for all $y \ge 0$.
Suppose $y_R > 0$ is a fixed point of $g$ with $g_R'(y_R) = f_R'(x_R)$.
Suppose $\{ v_L, v_R \}$ is a period-two solution of $g$, with $v_L < 0$ and $v_R > y_R$
and stability multiplier $\lambda$.
Suppose $g^2$ has no fixed points other than $y_L$, $v_L$, and $v_R$.
Then $f$ on $(u_L,u_R)$ is differentiably conjugate to $g$ on $(v_L,v_R)$,
and $f$ on $(a,x_R) \cup (x_R,b)$ is differentiably conjugate to $g$ on $(c,y_R) \cup (y_R,d)$
for some $c < v_L$ and $d > v_R$.
\label{le:constructLR}
\end{lemma}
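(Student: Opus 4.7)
The plan is to build both conjugacies by adapting the piecewise-smooth techniques of Lemmas \ref{le:constructOneDec}--\ref{le:constructTwoNonMon} to this setting, in which both pieces of $f$ are decreasing and the stable period-two orbit $\{u_L, u_R\}$ straddles the break at $x = 0$. Both conjugacies share a common local starting point: since $f$ interchanges the two halves of $(u_L, u_R)$, the natural smooth object at $u_L$ is the second iterate $f^2 = f_R \circ f_L$, which fixes $u_L$ with multiplier $\lambda$, and whose $g$-counterpart $g_R \circ g_L$ fixes $v_L$ with the same multiplier. Applying Lemma \ref{le:Sternberg} (and reflecting if needed so that the resulting $h$ is increasing) gives a local $C^1$ conjugacy on a neighbourhood $U_L$ of $u_L$ with $h(u_L) = v_L$; I then transport $h$ via $f$ by setting $h(y) = g_L\!\left(h(f_L^{-1}(y))\right)$ for $y$ in $U_R := f_L(U_L)$, a neighbourhood of $u_R$, so that $h \circ f = g \circ h$ is enforced between $U_L$ and $U_R$ and $h$ automatically conjugates $f^2$ to $g^2$ on both pieces.

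For the first conjugacy on $(u_L, u_R)$, I would extend the domain inwards towards $x_R$ by iterating $h(x) = (g^2)^{-1}(h(f^2(x)))$: since $u_L, u_R$ attract under $f^2$, successive preimages exhaust the basins $(u_L, x_R)$ and $(x_R, u_R)$. I then complete $h$ by setting $h(x_R) = y_R$; differentiating $h(x) = g\!\left(h(f^{-1}(x))\right)$ at $x_R$ and using $g_R'(y_R) = f_R'(x_R)$ yields $h'(x_R^+) = h'(x_R^-)$, so $h$ is $C^1$ at $x_R$.

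For the second conjugacy on $(a, x_R) \cup (x_R, b)$, I would start from the same local $h$ on $U_L \cup U_R$ and again extend by backward iterates of $f^2$, this time both inwards (towards $x_R$) and outwards (towards $a$ and $b$); since the basins of $u_L$ and $u_R$ under $f^2$ are $(a, x_R)$ and $(x_R, b)$ respectively, this exhausts $(a, x_R) \cup (x_R, b)$. In the event that the last extension step overshoots $(a, b)$, I fill any residual gap by an arbitrary increasing $C^1$ function, as in Lemmas \ref{le:constructOneDec} and \ref{le:constructOneNonMonDec}; this is harmless because Definition \ref{df:conj} requires the conjugacy relation only at points whose $f$-image lies in the domain. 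The iteration never visits $x_R$, so $h$ is defined precisely on $(a, x_R) \cup (x_R, b)$, with $h(u_L) = v_L$ and $h(u_R) = v_R$ automatically because $\{v_L, v_R\}$ is the only period-two orbit of $g$ in the target range.

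The principal obstacle is $C^1$-smoothness of $h$ at the two internal break points of $f^2$ inside the conjugacy domain, namely $x = 0$ and $x = x_0 := f_R^{-1}(0)$. A chain-rule calculation shows that the ratio of left to right derivatives of $f^2$ is $f_L'(0)/f_R'(0)$ at $x = 0$ (the outer factor $f_R'(f(0))$ cancels) and $f_R'(0)/f_L'(0)$ at $x = x_0$ (the inner factor $f_R'(x_0)$ cancels); by \eqref{eq:equalSlopeRatios} each ratio coincides with the corresponding ratio for $g^2$ at the analogous break points, so the one-sided-derivative comparison used in the proofs of Lemmas \ref{le:constructOneInc}--\ref{le:constructOneNonMonDec} carries through. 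The novelty here, compared with the earlier construction lemmas, is the presence of \emph{two} such break points of $f^2$ inside the interior, and the main technical work is the bookkeeping that verifies \eqref{eq:equalSlopeRatios} propagates correctly to both.
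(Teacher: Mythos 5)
Your construction of the second conjugacy (on $(a,x_R)\cup(x_R,b)$) is sound and essentially matches the paper's route: the paper conjugates $f^2$ to $g^2$ on $(a,x_R)$ (via Lemma \ref{le:constructTwoMon}) and then defines $h = g^{-1}\circ h_1\circ f$ on the right component, which is equivalent to your extension-by-$f^2$ argument; your bookkeeping of the break points of $f^2$ and the ratio condition \eqref{eq:equalSlopeRatios} is also in order. The genuine gap is in the first conjugacy, on $(u_L,u_R)$. You build $h$ from a linearisation at the period-two point $u_L$, extend inward, and then ``complete $h$ by setting $h(x_R)=y_R$'', arguing $C^1$-ness at $x_R$ by differentiating the functional equation and invoking $g_R'(y_R)=f_R'(x_R)$. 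That chain-rule step presupposes exactly what must be proved: that the one-sided derivatives of $h$ at $x_R$ exist, are finite and nonzero. Equality of multipliers at $x_R$ and $y_R$ is necessary for a differentiable conjugacy there, but it is not sufficient for the particular $h$ you have built from the other end of the interval. Writing $h$ near the repelling fixed point in Sternberg-linearising coordinates for $f^2$ and $g^2$, the conjugacy becomes a map $M$ commuting with $z\mapsto \Lambda z$ (with $\Lambda=(f_R'(x_R))^2$), so $M(z)=z\,P(\log z/\log\Lambda)$ for some continuous periodic $P$; your $h$ is differentiable at $x_R$ only if $P$ is constant, i.e.\ only if $M$ is linear, which fails for generic $f,g$ satisfying the hypotheses. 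Generically the difference quotient $\bigl(h(x)-y_R\bigr)/(x-x_R)$ oscillates without a limit, so the map you construct is a homeomorphic conjugacy on $(u_L,u_R)$ but not a diffeomorphism at the interior point $x_R$. This is precisely the phenomenon the paper warns about (a conjugacy cannot in general be pushed through a fixed point while keeping differentiability), and here $x_R$ lies in the \emph{interior} of the required domain, so the failure is fatal rather than cosmetic.

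The fix — and the paper's actual argument — is to reverse the roles of the periodic orbits: base the construction at the fixed point $x_R$, which is interior to $(u_L,u_R)$, and extend outward toward $u_L$ and $u_R$, which are excluded endpoints where differentiability is never needed. Since $x_R$ is repelling (when $\lambda\in(0,1)$), one applies Lemma \ref{le:matchTwoPointsDec} to the inverses $f_R^{-1}$ and $g_R^{-1}$ to get an increasing diffeomorphism $h:\bigl(f^2(0),f(0)\bigr)\to\bigl(g^2(0),g(0)\bigr)$, and then extends the domain outward in infinitely many steps, alternately by $h = g_L\circ h\circ f_L^{-1}$ and $h = g_R\circ h\circ f_R^{-1}$ on the successive intervals delimited by the orbit $f^k(0)$, using \eqref{eq:equalSlopeRatios} for smoothness at those junctions; the domain then exhausts $(u_L,u_R)$ with range $(v_L,v_R)$. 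Your seed-at-$u_L$ idea is the right tool for the second conjugacy (where $x_R$ is excluded), but for the first conjugacy the construction must be anchored at $x_R$.
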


\begin{figure}[h!]
\begin{center}
\includegraphics[height=6cm]{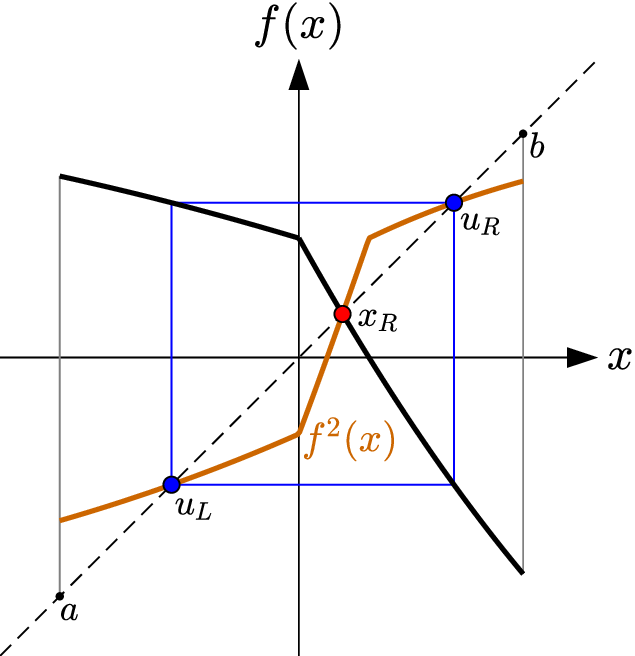} 
\caption{
A piecewise-smooth map $f$ satisfying the conditions of Lemma \ref{le:constructLR} with $f_R'(x_R) < -1$.
\label{fig:pwsDiffConjMap_n}
} 
\end{center}
\end{figure}

\begin{proof}
Suppose $f_R'(x_R) < -1$ as in Fig.~\ref{fig:pwsDiffConjMap_n} (the case $f_R'(x_R) > -1$ can be proved similarly).
In this case $x_R$ is unstable, thus the period-two solution is stable, i.e.~$\lambda \in (0,1)$.

We first construct a conjugacy on $(u_L,u_R)$ by using the fixed point $x_R$.
Unlike earlier proofs where such conjugacies were built from stable fixed points (e.g.~Lemma \ref{le:constructOneDec}),
here the fixed point is unstable so we use the inverse of $f_R$
to ensure the conjugacy relation holds on the entire domain of $h$.
By Lemma \ref{le:matchTwoPointsDec} applied to $f_R^{-1}$ and $g_R^{-1}$,
there exists an increasing diffeomorphism $h : \left( f^2(0), f(0) \right) \to \left( g^2(0), g(0) \right)$ such that
\begin{equation}
h \left( f_R^{-1}(x) \right) = g_R^{-1}(h(x)), 
\nonumber
\end{equation}
for all $x \in \left( f^2(0), f(0) \right)$.
We then use
$h(x) = g_L \left( h \left( f_L^{-1}(x) \right) \right)$
to extend the domain of $h$ to $\left[ f(0), f^3(0) \right)$.
We then use $h(x) = g_R \left( h \left( f_R^{-1}(x) \right) \right)$
to extend the domain to $\left( f^4(0), f^2(0) \right]$,
and so on, indefinitely.
In this way the domain of $h$ is extended to $(u_L,u_R)$, and its range to $(v_L,v_R)$.
As in earlier proofs the constraint \eqref{eq:equalSlopeRatios} implies $h$ is a diffeomorphism,
and by construction $h(f(x)) = g(h(x))$ for all $x \in (u_L,u_R)$.
This shows $f$ on $(u_L,u_R)$ is differentiably conjugate to $g$ on $(v_L,v_R)$.

To construct the second conjugacy, suppose for a moment $a \le f(b)$ and $f(a) \le b$.
We first apply Lemma \ref{le:constructTwoMon} to $f^2$ and $g^2$
(these maps have the same ratio of lower and upper derivatives at $0$
because $f$ and $g$ have the same ratio of lower and upper derivatives at $0$).
This gives us a diffeomorphism $h_1 : (a,x_R) \to (c,y_R)$, for some $c < v_L$, such that
\begin{equation}
h_1 \left( f^2(x) \right) = g^2 \left( h_1(x) \right)
\label{eq:constructLRProof20}
\end{equation}
for all $x \in (a,x_R)$.
Now define
\begin{equation}
h(x) = \begin{cases}
h_1(x), & a < x < x_R \,, \\
g^{-1} \left( h_1 \left( f(x) \right) \right), & x_R < x < b,
\end{cases}
\label{eq:constructLRProof30}
\end{equation}
which exclusively uses $f = f_R$ and $g = g_R$ in the second piece.
This is a diffeomorphism from $(a,x_R) \cup (x_R,b)$ to $(c,y_R) \cup (y_R,d)$, for some $d > v_R$,
and provides a conjugacy from $f$ to $g$ for the following reasons.
For any $x \in (a,x_R)$ we have $f(x) \in (x_R,b)$, so
\begin{equation}
h(f(x)) = g^{-1} \left( h_1 \left( f^2(x) \right) \right) = g \left( h_1(x) \right) = g(h(x)),
\nonumber
\end{equation}
using \eqref{eq:constructLRProof20} for the middle equality,
while for any $x \in (x_R,b)$ we have $f(x) \in (a,x_R)$, so
\begin{equation}
h(f(x)) = h_1(f(x)) = g \left( g^{-1} \left( h_1 \left( f(x) \right) \right) \right) = g(h(x)),
\nonumber
\end{equation}
hence $h$ provides a conjugacy as required.

Finally, if instead $a > f(b)$ we can only define $h$ up to $f^{-1}(a)$.
In this case we simply extend $h$ from $f^{-1}(a)$ to $b$ in some $C^1$ fashion
because $h$ doesn not need to satisfy the conjugacy condition for $f^{-1}(a) \le x < b$.
Similarly if $f(a) > b$ we can only define $h_1$, and hence $h$, down to $f^{-1}(b)$,
in which case we similarly extend $h$ from $f^{-1}(b)$ to $a$ in some $C^1$ fashion.
\end{proof}

\subsection{One fixed point at the point of non-differentiability}
\label{sub:mu0}

Finally we prove the existence of a differentiable conjugacy in the case that
$0$ is a fixed point of $f$ and $g$.
This requires the lower and upper derivatives of $f$ at $x=0$
are the same as the lower and upper derivatives of $g$ at $y=0$.
This is a natural generalisation of requiring equal stability multipliers in the smooth case,
and is stronger than the requirement \eqref{eq:equalSlopeRatios} of equal slope ratios.
This result is used in \S\ref{sec:matching} for the bifurcation value $\mu = 0$.

\begin{lemma}
Consider piecewise-$C^2$ maps $f$ and $g$ \eqref{eq:fg} satisfying \eqref{eq:infty}.
Suppose $f_L'(x) \ne 0$ for all $x \in (a,0]$ and $f_R'(x) \ne 0$ for all $x \in [0,b)$.
Suppose $f$ has the unique fixed point $x=0$ with $f_L'(0) = a_L \ne \pm 1$ and $f_R'(0) = a_R \ne \pm 1$.
Suppose $g_L'(y) \ne 0$ for all $y \le 0$ and $g_R'(y) \ne 0$ for all $y \ge 0$.
Suppose $g$ has the unique fixed point $y=0$ with $g_L'(0) = a_L$ and $g_R'(0) = a_R$.
In the case $a_L < 0$ and $a_R < 0$ suppose also $a_L a_R \ne 1$ and that $f^2$ and $g^2$ have no fixed points other than $0$.
Then $f$ on $(a,b)$ can be differentiably conjugated to $g$ on some subset of $\mathbb{R}$
using a conjugacy function $h$ with $h'(0) = 1$.
\label{le:muEqualsZero}
\end{lemma}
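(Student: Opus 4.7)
The plan is to build $h$ near $0$ from two local Sternberg linearisations, one per side, normalised so that the two pieces glue together $C^1$ at $0$ with $h'(0)=1$, and then propagate $h$ outward to all of $(a,b)$ by the orbit-tracking procedures already developed in Section \ref{sec:construction}.

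For the local construction, extend $f_L$ smoothly past $0$ to a $C^2$ map $\bar f_L$ with $\bar f_L(0)=0$ and $\bar f_L'(0)=a_L$, and likewise produce $\bar g_L$, $\bar f_R$, $\bar g_R$. Since $a_L\notin\{-1,0,1\}$, Lemma \ref{le:Sternberg} supplies $C^1$ diffeomorphisms $\phi_L$ and $\psi_L$, defined on neighbourhoods of $0$, with $\phi_L\circ\bar f_L=\ell_L\circ\phi_L$ and $\psi_L\circ\bar g_L=\ell_L\circ\psi_L$, where $\ell_L(z)=a_L z$. Any nonzero constant multiple of a linearising coordinate is again a linearising coordinate, so we may rescale to arrange $\phi_L'(0)=\psi_L'(0)$; then $h_L:=\psi_L^{-1}\circ\phi_L$ is a $C^1$ diffeomorphism near $0$ conjugating $f_L$ to $g_L$ with $h_L(0)=0$ and $h_L'(0)=1$. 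Repeating the construction on the right produces $h_R$ with the same properties, and setting $h(x)=h_L(x)$ for $x\le 0$ and $h(x)=h_R(x)$ for $x\ge 0$ defines a $C^1$ diffeomorphism on a neighbourhood of $0$ satisfying the conjugacy relation there, with $h(0)=0$ and $h'(0)=1$.

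We then extend $h$ outward to the whole of $(a,b)$ exactly as in the earlier lemmas of Section \ref{sec:construction}, choosing the appropriate template based on the signs of $a_L$ and $a_R$: the procedure of Lemma \ref{le:constructOneInc} when $a_L,a_R>0$, of Lemmas \ref{le:constructOneNonMonInc}--\ref{le:constructOneNonMonDec} when the slopes have opposite signs, and of Lemma \ref{le:constructOneDec} when $a_L,a_R<0$. In each step we define $h$ on a new interval by either $h(x)=g^{-1}(h(f(x)))$ or $h(x)=g(h(f^{-1}(x)))$, whichever uses the contracting direction towards $0$; differentiability on interiors is immediate from smoothness of the pieces, and at $0$ and its iterated preimages the chain-rule calculation carried out in the cited lemmas, together with $f_L'(0)=g_L'(0)$, $f_R'(0)=g_R'(0)$, and the already-established $h'(0)=1$, shows that the one-sided limits of $h'$ agree. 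In the case $a_L,a_R<0$ the hypothesis $a_L a_R\ne 1$ makes $0$ a hyperbolic fixed point of $f^2$ and the assumed absence of other fixed points of $f^2$ and $g^2$ ensures that the alternating left/right extension reaches the endpoints of $(a,b)$ in finitely many steps. The one genuinely delicate point is the matching at $0$: because $f$ and $g$ are truly non-smooth there, no single Sternberg linearisation is available, so we must linearise each side independently and then exploit the one-parameter rescaling freedom of each linearising coordinate to simultaneously force $h_L'(0)=h_R'(0)=1$. Once this matching is secured, the gluing and the subsequent outward extension are routine.
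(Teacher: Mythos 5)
Your construction is fine in the case $a_L, a_R > 0$, where each piece maps its own side into itself, and there it essentially reproduces the paper's argument (the paper normalises through the linear model via $h = h_2^{-1}\circ h_3\circ h_1$ rather than rescaling the Sternberg coordinates, but these are the same idea). The gap is in the cases where at least one slope is negative. There the glued map $h$ (equal to $h_L$ on $x\le 0$ and $h_R$ on $x\ge 0$) does \emph{not} satisfy the conjugacy relation near $0$, contrary to your claim. For example, if $a_R<0$ then points $x>0$ near $0$ satisfy $f(x)=f_R(x)<0$, so the relation $h(f(x))=g(h(x))$ reads $h_L(f_R(x)) = g_R(h_R(x))$: it couples the two one-sided conjugacies on an interval to the left of $0$. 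Your $h_R$ only gives $h_R(f_R(x)) = g_R(h_R(x))$, so you would need $h_L\equiv h_R$ on a one-sided neighbourhood of $0$, and normalising both derivatives to $1$ at the single point $0$ matches them only to first order, not as germs ($h_L$ and $h_R$ linearise different maps, with multipliers $a_L\ne a_R$ in general). The same failure occurs on both sides when $a_L,a_R<0$, where neither half-line is invariant, so even the left-hand Sternberg conjugacy of $\bar f_L$ has no direct dynamical meaning for $f$.

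The repair is what the paper actually does: build the conjugacy on one side only and \emph{define} it on the other side by pulling back through the dynamics rather than by an independent linearisation. For $a_L>0$, $a_R<0$ one constructs $h$ on $(a,0]$ with $h'(0)=1$ and sets $h(x)=g_R^{-1}(h(f_R(x)))$ for $x>0$; the conjugacy relation then holds by construction, and the chain rule together with the full equality $f_R'(0)=g_R'(0)=a_R$ (not just the ratio condition) gives $\lim_{x\to 0^+}h'(x)=1$. For $a_L,a_R<0$ one must first pass to the second iterates: $f^2$ and $g^2$ are increasing with hyperbolic fixed point $0$ (multiplier $a_La_R\ne 1$), one conjugates $f^2$ to $g^2$ on $(a,0]$ with derivative $1$ at $0$, and then defines $h(x)=g_R^{-1}(h_4(f_R(x)))$ for $x>0$. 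Your appeal to $a_La_R\ne1$ and to the absence of other fixed points of $f^2,g^2$ addresses termination of the outward extension, but it does not rescue the initial glued germ, which is where the argument breaks.
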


\begin{proof}
First suppose $a_L > 0$ and $a_R > 0$.
By Lemma \ref{le:globalLinearisationInc} applied to $f_L$ (smoothly extended into $x > 0$),
there exists increasing $h_1$ that differentiably conjugates $f_L$ on $(a,0]$ to
the linear map $\ell(z) = a_L z$ on $(p,0]$, for some $p < 0$.
Similarly for any $\tilde{c} < 0$ there exists increasing $h_2$
that differentiably conjugates $g_L$ on $(\tilde{c},0]$ to $\ell$ on $(r,0]$, for some $r < 0$,
Define the linear function $h_3(z) = \frac{h_2'(0) z}{h_1'(0)}$.
Then $h = h_2^{-1} \circ h_3 \circ h_1$ has $h'(0) = 1$ and differentiably conjugates $f_L$ on $(a,0]$ to $g_L$ on $(c,0]$,
where $c = h_2^{-1} \left( h_3(p) \right) < 0$ (this is well-defined because $r \to -\infty$ as $\tilde{c} \to -\infty$
due to \eqref{eq:infty}).
In the same way we can construct $h$ with $h'(0) = 1$ that differentiably conjugates $f_R$ on $[0,b)$ to $g_R$ on $[0,d)$, for some $d > 0$.
The two parts of $h$ have the same derivative at $x=0$ thus together provide a differentiable conjugacy
from $f$ on $(a,b)$ to $g$ on $(c,d)$.

Second suppose $a_L > 0$ and $a_R < 0$ (the case $a_L < 0$ and $a_R > 0$
follows from the substitution $x \mapsto -x$).
As above we can construct $h$ with $h'(0) = 1$
that conjugates $f_L$ on $(a,0]$ to $g_L$ on $(c,0]$, for some $c > 0$.
We then use
\begin{equation}
h(x) = g_R^{-1} \left( h(f_R(x)) \right),
\label{eq:muEqualsZeroProof10}
\end{equation}
to extend the domain of $h$ to $(0,b)$, assuming $f_R(b) \ge a$ otherwise just extend to $f_R^{-1}(a)$.
This provides a differentiable conjugacy on the larger domain and is $C^1$ at $x=0$
because the chain rule applied to \eqref{eq:muEqualsZeroProof10} gives
\begin{equation}
\lim_{x \to 0^+} h'(0) = \frac{f_R'(0)}{g_R'(0)} \,\lim_{x \to 0^-} h'(0),
\nonumber
\end{equation}
which equals $\lim_{x \to 0^-} h'(0) = 1$ because $f_R'(0) = g_R'(0)$ by assumption.
If $f_R(b) < a$ we then extend the domain of $h(x)$ from $f_R^{-1}(a)$ to $b$ in some $C^1$ fashion.

Finally suppose $a_L < 0$ and $a_R < 0$;
also suppose $a < f(b)$ and $f(a) < b$ (other cases can be dealt with similarly).
Notice $f^2$ and $g^2$ are increasing $C^1$ maps on $(a,b)$ and $\mathbb{R}$ respectively.
Both maps $f^2$ and $g^2$ have the unique fixed point $0$
for which the associated stability multiplier is $a_L a_R \ne 1$.
By applying the above steps to $f^2$ and $g^2$ (instead of $f_L$ and $g_L$),
we can construct a $C^1$ function $h_4$ with $h_4'(0) = 1$
that differentiably conjugates $f^2$ on $(a,0]$ to $g^2$ on $(c,0]$, for some $c < 0$, i.e.
\begin{equation}
h_4 \left( f^2(x) \right) = g^2(h_4(x)),
\nonumber
\end{equation}
for all $x \in (a,0]$.
Now define
\begin{equation}
h(x) = \begin{cases}
h_4(x), & a < x < 0, \\
g^{-1} \left( h_4 \left( f(x) \right) \right), & 0 < x < b,
\end{cases}
\nonumber
\end{equation}
which exclusively uses $f = f_R$ and $g = g_R$ in the second piece.
As in the proof of Lemma \ref{le:constructLR} this gives $h(f(x)) = g(h(x))$ for all $x \in (a,b)$.
Moreover, $\lim_{x \to 0^-} h'(0) = h_4'(0) = 1$,
and by the chain rule
\begin{equation}
\lim_{x \to 0^+} h'(0) = \frac{h_4'(0) f_R'(0)}{g_R'(0)} = h_4'(0) = 1,
\nonumber
\end{equation}
because $f_R'(0) = g_R'(0)$.
Hence $h$ is $C^1$ with $h'(0) = 1$ and differentiably conjugates $f$ on $(a,b)$ to $g$ on $(c,d)$,
where $d = g_R^{-1} \left( h_4 \left( f_R(b) \right) \right) > 0$.
\end{proof}

\section{Matching stability multipliers and derivative ratios}
\label{sec:matching}

In this section we prove Theorems \ref{th:noBif}, \ref{th:snLike}, and \ref{th:pdLike}.
In each case this is achieved by first defining the parameters of the extended normal form $g$
in such a way that the stability multipliers of fixed points and period-two solutions of $g$
match the corresponding stability multipliers of $f$, and so that $f$ and $g$
have the same ratio of lower and upper derivatives at $0$.
Second we apply the results of the previous section to show $f$ and $g$ are differentiably conjugate on certain intervals,
and finally we verify the bound \eqref{eq:theBound}.

In each case the values of $a_L$ and $a_R$ are not $1$,
so in a neighbourhood of $(x;\mu) = (0;0)$ the left and right pieces of $f$ have locally unique fixed points
\begin{align}
x_L(\mu) &= \frac{\beta}{1-a_L} \,\mu + \cO \left( \mu^2 \right), \label{eq:xL} \\
x_R(\mu) &= \frac{\beta}{1-a_R} \,\mu + \cO \left( \mu^2 \right). \label{eq:xR}
\end{align}
The point $x_L(\mu)$ is a fixed point of $f$ if $x_L(\mu) \le 0$
(if instead $x_L(\mu) > 0$ it is a {\em virtual} fixed point of $f$ \cite{DiBu08}),
while $x_R(\mu)$ is a fixed point of $f$ if $x_R(\mu) \ge 0$.
The stability multipliers of the fixed points are
\begin{align}
\lambda_L(\mu) &= \frac{\partial f_L}{\partial x}(x_L(\mu);\mu) = a_L + \cO(\mu), \label{eq:stabMultxL} \\
\lambda_R(\mu) &= \frac{\partial f_R}{\partial x}(x_R(\mu);\mu) = a_R + \cO(\mu). \label{eq:stabMultxR}
\end{align}

The extended normal form $g$ has three parameters, $s_L$, $s_R$, and $t$,
and in each of the proofs below we set the first two of these parameters as
\begin{align}
s_L(\mu) &= \begin{cases}
\lambda_L(\mu), & \mu \le 0, \\
\frac{\frac{\partial f_L}{\partial x}(0;\mu) \lambda_R(\mu)}{\frac{\partial f_R}{\partial x}(0;\mu)}, & \mu \ge 0,
\end{cases} \label{eq:sLproof1} \\
s_R(\mu) &= \begin{cases}
\frac{\frac{\partial f_R}{\partial x}(0;\mu) \lambda_L(\mu)}{\frac{\partial f_L}{\partial x}(0;\mu)}, & \mu \le 0, \\
\lambda_R(\mu), & \mu \ge 0.
\end{cases} \label{eq:sRproof1}
\end{align}
This is because every scenario we examine in detail below has $a_R < 1$,
so with $\beta > 0$ the point $x_R(\mu)$ is a fixed point of $f$ for small $\mu > 0$.
The corresponding fixed point of $g$ has stability multiplier $s_R(\mu)$, because the right piece of $g$ is affine,
hence for $\mu > 0$ we must set $s_R(\mu) = \lambda_R(\mu)$
so that these two fixed points share the same stability multiplier.
For $s_L(\mu)$ the $\mu > 0$ piece of \eqref{eq:sLproof1} ensures
$f$ and $g$ have the same ratio of lower and upper derivatives at $0$.
For $\mu < 0$ the formulas \eqref{eq:sLproof1} and \eqref{eq:sRproof1}
ensure the left fixed points of $f$ and $g$ share the same stability multiplier
in the proofs of Theorems \ref{th:noBif} and \ref{th:pdLike};
for the proof of Theorem \ref{th:snLike} we have more flexibility
but can again use \eqref{eq:sLproof1} and \eqref{eq:sRproof1}.
For $t(\mu)$ we use a different function in each of the three proofs.

\begin{proof}[Proof of Theorem \ref{th:noBif}]
We first address the increasing case, and for brevity suppose $a_L, a_R \in (0,1)$
because the case $a_L, a_R \in (1,\infty)$ can be proved in the same way using the inverses of $f$ and $g$.

\myStep{1}{Match $g$ to $f$.}
Let $\tilde{g}(y;\mu) = g(y;\nu(\mu),s_L(\mu),s_R(\mu),t(\mu))$
where $\nu(\mu) = f(0;\mu) = \beta \mu + \cO \left( \mu^2 \right)$ (as in the theorem statement),
$s_L(\mu)$ and $s_R(\mu)$ are defined by \eqref{eq:sLproof1} and \eqref{eq:sRproof1}, and $t(\mu) = 0$.
Notice $s_L(\mu)$ and $s_R(\mu)$ are continuous functions of $\mu$ with $s_L(0) = a_L$ and $s_R(0) = a_R$,
and it what follows we assume $\mu$ is small enough that $s_L(\mu), s_R(\mu) \in (0,1)$.
The left and right pieces of $\tilde{g}$ are
\begin{align}
g_L(y;\mu) &= \nu(\mu) + s_L(\mu) y, \\
g_R(y;\mu) &= \nu(\mu) + s_R(\mu) y.
\end{align}
These have unique fixed points
\begin{align}
y_L(\mu) &= \frac{\nu(\mu)}{1 - s_L(\mu)} = \frac{\beta}{1 - a_L} \,\mu + \co(\mu), \\
y_R(\mu) &= \frac{\nu(\mu)}{1 - s_R(\mu)} = \frac{\beta}{1 - a_R} \,\mu + \co(\mu),
\end{align}
being $C^1$ functions of $\mu$.

\myStep{2}{Obtain a differentiable conjugacy $h$.}
Now fix $\mu_0, p > 0$ and consider $\mu \in (-\mu_0,0)$.
We assume $\mu_0, p > 0$ have been taken small enough that
$f_L'(x;\mu) > 0$ for all $x \in (-p,0]$,
$f_R'(x;\mu) > 0$ for all $x \in [0,p)$,
and $x_L(\mu) \in (-p,p)$ is the unique fixed point of $f$ on $(-p,p)$ as in Fig.~\ref{fig:pwsDiffConjMain}.
Since $s_L(\mu), s_R(\mu) \in (0,1)$ and $\beta > 0$,
the point $y_L(\mu)$ is the unique fixed point of $\tilde{g}$ on $\mathbb{R}$.
Its stability multiplier is $s_L(\mu)$, which is the same as the stability multiplier of $x_L(\mu)$;
also $f$ and $\tilde{g}$ have the same ratio of lower and upper derivatives at $0$ by our above definition of $s_L(\mu)$ and $s_R(\mu)$.
Thus by Lemma \ref{le:constructOneInc} applied to $f$ on $(-p,p)$ and $\tilde{g}$ on $\mathbb{R}$,
for all $\mu \in (-\mu_0,0)$ there exists $c(\mu) < 0$, $d(\mu) > 0$, and a diffeomorphism
$h : (-p,p) \to (c(\mu),d(\mu))$ that conjugates $f$ to $\tilde{g}$.
For small $\mu > 0$ such a diffeomorphism can be constructed
in a similar way based on the equality of the stability multipliers of the fixed points of the right pieces of $f$ and $\tilde{g}$.
For $\mu = 0$ such a diffeomorphism $h$ exists by Lemma \ref{le:muEqualsZero} having also $h'(0) = 1$.

\begin{figure}[h!]
\begin{center}
\includegraphics[height=9cm]{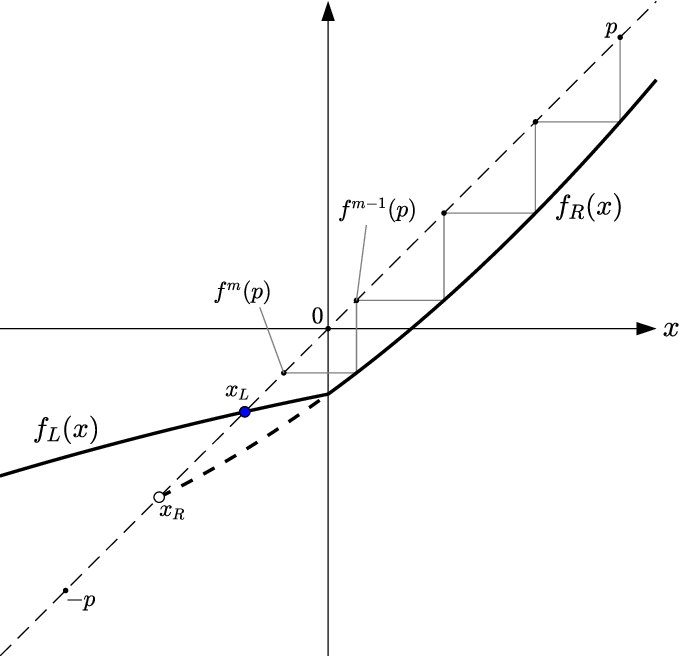} 
\caption{
A sketch of the map $f$ with $\mu < 0$ to support the proof of Theorem \ref{th:noBif}.
The fixed point $x_L(\mu)$ of $f_L$ is a fixed point of $f$,
while the fixed point $x_R(\mu)$ of $f_R$ is a virtual fixed point of $f$.
\label{fig:pwsDiffConjMain}
} 
\end{center}
\end{figure}

\myStep{3}{Bound the left image point $c(\mu)$.}
To complete the proof in the increasing case
it remains for us to verify \eqref{eq:theBound}
using $N = (-p,p)$ and $M(\mu) = (c(\mu),d(\mu))$.
That is, we need to show that the ratios
$\frac{|c(\mu)|}{p}$ and $\frac{d(\mu)}{p}$
can be made as close to $1$ as we like by choosing $\mu_0, p > 0$ suitably small.
For brevity we demonstrate this just for $\mu \in (-\mu_0,0)$
as the result for $\mu \in [0,\mu_0)$ can be proved in the same way
(using the fact that $h'(0) = 1$ in the case $\mu = 0$).

We first apply Lemma \ref{le:bound} to $f_L$ on $(a,b) = (-p,0)$ and $g_L$ on $(-\infty,0)$
having fixed points $x^* = x_L(\mu)$ and $y^* = y_L(\mu)$, respectively.
Here
\begin{equation}
\chi = \frac{y_L(\mu)}{x_L(\mu)} = 1 + \cO(\mu),
\label{eq:chi1}
\end{equation}
and by \eqref{eq:bound} with $x = -p$ there exists $r > 0$ such that
\begin{equation}
\left| \frac{c(\mu) - y_L(\mu)}{-p - x_L(\mu)} - \chi \right| \le \frac{3}{2 r} \big( 4 |p+x_L(\mu)| + |x_L(\mu)| \big),
\label{eq:boundApplied_a1}
\end{equation}
for all sufficiently small $\mu > 0$.
The precise value on the right-hand side of \eqref{eq:boundApplied_a1} is unimportant,
it is sufficient for us to observe this value is $\cO(\mu_0,p)$.
By multiplying both sides of \eqref{eq:boundApplied_a1} by $1 + \frac{x_L(\mu)}{p} \in (1,2)$ we obtain
\begin{equation}
\left| \frac{|c(\mu)|}{p} - \chi \right| = \cO(\mu_0,p).
\nonumber
\end{equation}
Thus $\frac{|c(\mu)|}{p} \to \chi \to 1$ as $\mu$ and $p$ tend to $0$,
so indeed for any $\delta > 0$ there exists $\mu_0, p > 0$
such that $1 - \delta < \frac{|c(\mu)|}{p} < 1 + \delta$
for all $\mu \in (-\mu_0,0)$.

\myStep{4}{Bound the left image point $d(\mu)$.}
To prove the same result for $\frac{d(\mu)}{p}$,
let $m = m(\mu) \ge 1$ be such that $f^m(p;\mu) < 0$ and $f^{m-1}(p;\mu) \ge 0$, see Fig.~\ref{fig:pwsDiffConjMain}.
We again apply Lemma \ref{le:bound} to $f_L$ and $g_L$, but now in \eqref{eq:bound} use $x = f^m(p;\mu)$ to obtain
\begin{equation}
\left| \frac{h \left( f^m(p;\mu) \right) - y_L(\mu)}{f^m(p;\mu) - x_L(\mu)} - \chi \right|
\le \frac{3}{2 r} \big( 4 |f^m(p;\mu) + x_L(\mu)| + |x_L(\mu)| \big).
\nonumber
\end{equation}
Observe $f^m(p;\mu) = f_R^m(p;\mu)$
and $h \left( f^m(p;\mu) \right) = \tilde{g}^m(d(\mu);\mu) = g_R^m(d(\mu);\mu)$, so
\begin{equation}
\left| \frac{g_R^m(d(\mu);\mu) - y_L(\mu)}{f_R^m(p;\mu) - x_L(\mu)} - \chi \right| = \cO(\mu_0,p).
\label{eq:boundApplied_b1}
\end{equation}
It is a simple exercise to show there exists $K \ge 0$ such that
\begin{equation}
f^m(p;\mu) - x_R(\mu) \le K \lambda_R^m p,
\label{eq:K}
\end{equation}
for all sufficiently small $\mu, p > 0$ (this can be achieved by performing
calculations similar to those in the proof of Lemma \ref{le:fn}a).
To approximate $p$ from $f^m(p;\mu)$ we apply Lemma \ref{le:fn}b to $f_R$,
shifted so that its fixed point $x_R(\mu)$ is $0$.
This fixed point has stability multiplier $\lambda_R(\mu)$,
so using $n = m$ and $v = f^m(p;\mu) - x_R(\mu)$ in \eqref{eq:fInvnBound} gives
\begin{equation}
\left| p - x_R(\mu) - \frac{f^m(p;\mu) - x_R(\mu)}{\lambda_R(\mu)^m} \right|
\le \frac{2 \left( f^m(p;\mu) - x_R(\mu) \right)^2}{5 r \lambda_R(\mu)^m}
\le \frac{2 K^2 p}{5},
\label{eq:boundApplied_b2}
\end{equation}
using also \eqref{eq:K} and assuming $p \le r$.
To similarly express $d(\mu)$ in terms of $g_R^m(d(\mu);\mu)$,
observe $g_R$ is affine with slope $s_R(\mu)$ and fixed point $y_R(\mu)$, so we have the exact formula
\begin{equation}
d(\mu) - y_R(\mu) = \frac{g_R^m(d(\mu);\mu) - y_R(\mu)}{s_R(\mu)^m}.
\label{eq:boundApplied_b3}
\end{equation}
From \eqref{eq:boundApplied_b1}, \eqref{eq:boundApplied_b2}, and \eqref{eq:boundApplied_b3},
direct calculations show $\frac{d(\mu)}{p} \to 1$ as $\mu$ and $p$ tend to $0$.

\myStep{5}{Repeat for the remaining cases.}
Now suppose $a_L, a_R \in (-1,0)$ (the case $a_L, a_R \in (-\infty,-1)$ can be handled similarly)
and define $s_L(\mu)$, $s_R(\mu)$, and $t(\mu)$ as above.
Fix $\mu_0, p > 0$ and choose any $\mu \in (-\mu_0,0)$.
We can assume $\mu_0, p > 0$ have been taken small enough that
Lemma \ref{le:constructOneDec} can be applied to $f$ on $(-p,p)$ and $\tilde{g}$ on $\mathbb{R}$.
Thus there exists $c(\mu) < 0$, $d(\mu) > 0$, and a diffeomorphism
$h : (-p,p) \to (c(\mu),d(\mu))$ that conjugates $f$ to $\tilde{g}$.
For small $\mu > 0$ a differentiable conjugacy exists for the same reasons;
for $\mu = 0$ a differentiable conjugacy exists by Lemma \ref{le:muEqualsZero}.

If instead $a_L \in (0,1)$ and $a_R \in (-1,0)$, or $a_L \in (-1,0)$ and $a_R \in (0,1)$,
the existence of a differentiable conjugacy $h$
follows from Lemma \ref{le:constructOneNonMonInc} for one sign of $\mu$,
Lemma \ref{le:constructOneNonMonDec} for the other sign of $\mu$,
and Lemma \ref{le:muEqualsZero} for $\mu = 0$.
In each case \eqref{eq:theBound} can be verified through calculations similar to those given above.
\end{proof}

\begin{proof}[Proof of Theorem \ref{th:snLike}]
We first treat the monotone case $a_R \in (0,1)$ with $\mu > 0$.

\myStep{1}{Match $g$ to $f$.}
With $\mu > 0$ the map $f$ has two fixed points $x_L(\mu)$ and $x_R(\mu)$, given by \eqref{eq:xL} and \eqref{eq:xR},
with stability multipliers $\lambda_L(\mu)$ and $\lambda_R(\mu)$, given by \eqref{eq:stabMultxL} and \eqref{eq:stabMultxR}.
Since $f$ is piecewise-$C^3$ the stability multipliers are $C^2$ functions of $\mu$.

We define $s_L(\mu)$ and $s_R(\mu)$ by \eqref{eq:sLproof1} and \eqref{eq:sRproof1}
so that the right fixed point of $g$ has the same stability multiplier as $x_R(\mu)$
and $f$ and $g$ have the same ratio of lower and upper derivatives at $0$.
We now define $t(\mu)$ so that the left fixed point of $g$ has the same stability multiplier as $x_L(\mu)$.
This requires second-order asymptotics, so let us write
\begin{equation}
\begin{split}
f_L(x;\mu) &= a_L x + \beta \mu + c_L x^2 + d_L \mu x + e \mu^2 + \cO \left( \left( |x| + |\mu| \right)^3 \right), \\
f_R(x;\mu) &= a_R x + \beta \mu + c_R x^2 + d_R \mu x + e \mu^2 + \cO \left( \left( |x| + |\mu| \right)^3 \right),
\end{split}
\label{eq:fLfRtoSecondOrder}
\end{equation}
where $c_L, c_R, d_L, d_R, e \in \mathbb{R}$ are constants.
Using \eqref{eq:sLproof1} and \eqref{eq:sRproof1} we obtain
\begin{align}
s_L(\mu) &= a_L + \left( d_L + \frac{2 a_L \beta c_R}{a_R (1-a_R)} \right) \mu + \cO \left( \mu^2 \right), \label{eq:sL2} \\
s_R(\mu) &= a_R + \left( d_R + \frac{2 \beta c_R}{1 - a_R} \right) \mu + \cO \left( \mu^2 \right);
\end{align}
also
\begin{equation}
\lambda_L(\mu) = a_L + \left( d_L + \frac{2 \beta c_L}{1 - a_L} \right) \mu + \cO \left( \mu^2 \right).
\end{equation}
Let $\nu(\mu) = f(0;\mu)$ and let
\begin{equation}
\hat{g}_L(y;\mu,t) = \nu(\mu) + s_L(\mu) y + t y^2,
\nonumber
\end{equation}
be the left piece of $g$.
The map $\hat{g}_L$ has a fixed point
$\hat{y}_L(\mu,t) = \frac{\beta}{1 - a_L} \,\mu + \cO \left( \mu^2 \right)$
with stability multiplier
\begin{equation}
\hat{\lambda}_L(\mu,t) = \frac{\partial \hat{g}_L}{\partial y} \big( \hat{y}_L(\mu,t); \mu, t \big)
= a_L + \left( d_L + \frac{2 a_L \beta c_R}{a_R(1-a_R)} + \frac{2 t}{1 - a_L} \right) \mu + \cO \left( \mu^2 \right),
\end{equation}
using \eqref{eq:sL2}.
Now let
\begin{equation}
U(\mu,t) = \begin{cases}
\frac{1}{\mu} \left( \hat{\lambda}_L(\mu,t) - \lambda_L(\mu) \right), & \mu > 0, \\
\frac{\partial}{\partial \mu} \left( \hat{\lambda}_L(\mu,t) - \lambda_L(\mu) \right), & \mu = 0.
\end{cases}
\label{eq:Ksn}
\end{equation}
Notice $U$ is $C^1$ and
\begin{equation}
U(\mu,t) = \frac{2 (t - \beta c_L)}{1 - a_L} + \frac{2 a_L \beta c_R}{a_R(1-a_R)} + \cO(\mu).
\nonumber
\end{equation}
Thus by the implicit function theorem there exists a unique $C^1$ function
\begin{equation}
t(\mu) = \beta c_L - \frac{a_L(1-a_L) \beta c_R}{a_R (1 - a_R)} + \cO(\mu),
\label{eq:tsn}
\end{equation}
so that $U(\mu,t(\mu)) = 0$ for all $\mu$ in a neighbourhood of $0$.

\myStep{2}{Obtain two differentiable conjugacies for $\mu > 0$.}
We now show $f$ is differentiably conjugate to
$\tilde{g}(y;\mu) = g(y;\nu(\mu),s_L(\mu),s_R(\mu),t(\mu))$,
where $\nu(\mu) = f(0;\mu)$
and $s_L(\mu)$, $s_R(\mu)$, and $t(\mu)$ are given by \eqref{eq:sLproof1}, \eqref{eq:sRproof1}, and \eqref{eq:tsn}.
The left and right pieces of $\tilde{g}$ are
\begin{align}
g_L(y;\mu) &= \nu(\mu) + s_L(\mu) y + t(\mu) y^2, \\
g_R(y;\mu) &= \nu(\mu) + s_R(\mu) y,
\end{align}
which have locally unique fixed points
\begin{align}
y_L(\mu) &= \frac{\nu(\mu)}{1 - s_L(\mu)} = \frac{\beta}{1 - a_L} \,\mu + \cO \left( \mu^2 \right), \\
y_R(\mu) &= \frac{\nu(\mu)}{1 - s_R(\mu)} = \frac{\beta}{1 - a_R} \,\mu + \cO \left( \mu^2 \right).
\end{align}
Notice the stability multiplier of $y_L(\mu)$, namely $\lambda_L(\mu)$,
is the same as the stability multiplier of $x_L(\mu)$
in view of the way we have constructed $t(\mu)$.

Fix $\mu_0, p > 0$ and consider $\mu \in (0,\mu_0)$.
We can assume $\mu_0, p > 0$ are small enough that $f$
satisfies the conditions in Lemma \ref{le:constructTwoMon} on the interval $(a,b) = (-p,p)$.
However, $\tilde{g}$ may not satisfy the conditions on $g$
in Lemma \ref{le:constructTwoMon} far from $y=0$ due to the $y^2$-term in $g_L$,
so we work instead with a piecewise-$C^2$ map $\tilde{g}^*$ on $\mathbb{R}$
that is identical to $\tilde{g}$ for all $y \ge -2 p$
and is defined for $y < -2 p$ in such a way that it does satisfy the conditions in Lemma \ref{le:constructTwoMon}.
Then, by Lemma \ref{le:constructTwoMon},
for all $\mu \in (0,\mu_0)$ there exist $c(\mu) < 0$, $d(\mu) > 0$, and diffeomorphisms
$h_1 : (-p,x_R(\mu)) \to (c(\mu),y_R(\mu))$ and
$h_2 : (x_L(\mu),p) \to (y_L(\mu),d(\mu))$ that conjugate $f$ to $\tilde{g}^*$.
As in the proof of Theorem \ref{th:noBif}
we can use Lemma \ref{le:bound} to show
$\frac{|c(\mu)|}{p} \to 1$ and $\frac{d(\mu)}{p} \to 1$ as $\mu \to 0$ verifying \eqref{eq:theBound}.
In particular, we can get $c(\mu) > -2 p$
so $h_1$ and $h_2$ differentiably conjugate $f$ to $\tilde{g}$ as required.

\myStep{3}{Obtain a differentiable conjugacy for $\mu \le 0$.}
For $\mu = 0$ the existence of a differentiable conjugacy
follows from Lemma \ref{le:muEqualsZero},
while for $\mu < 0$ the map $f$ has no fixed points
so we use Lemma \ref{le:constructZeroMon}.
Notice we can assume $\mu_0, p > 0$ are small enough that for all $\mu \in (-\mu_0,0)$ the map $f$
satisfies the conditions in Lemma \ref{le:constructZeroMon} on the interval $(a,b) = (-p,p)$.
If necessary we modify $\tilde{g}$ for $y < -2 p$ to form $\tilde{g}^*$ satisfying
the conditions on $g$ in Lemma \ref{le:constructZeroMon} on $\mathbb{R}$.
Then by Lemma \ref{le:constructZeroMon} there exist $c(\mu) < 0$, $d(\mu) > 0$,
and a diffeomorphism $h : (-p:p) \to (c(\mu),d(\mu))$ that conjugates $f$ to $\tilde{g}^*$.

To verify \eqref{eq:theBound},
in the construction used for the proof of Lemma \ref{le:constructZeroMon}
we can take $z = {\rm min} \left( f^{-m}(-p), f^n(p) \right)$,
where $m \ge 1$ is such that $f^{-m}(-p) < 0 < f^{-(m+1)}(-p)$
and $n \ge 1$ is such that $f^n(p) < 0 < f^{n-1}(p)$
(the special case that $0$ belongs to the backward orbit of $-p$ or the forward orbit of $p$
can be accommodated by slightly increasing the value of $p$).
Then, by \eqref{eq:constructZeroMonProof1},
we have $g^{-m}(c(\mu)) = h \left( f^{-m}(-p) \right) = f^{-m}(-p)$
and $g^n(d(\mu)) = h(f^n(p)) = f^n(p)$.
Via the approach used in Step 4 of the previous proof,
we get $\frac{|c(\mu)|}{p}$ and $\frac{d(\mu)}{p}$ converging to $1$ as $\mu_0, p \to 0$ verifying \eqref{eq:theBound}.
So again we can assume $c(\mu) > -2 p$ and hence $f$ on $(-p,p)$ is differentiably conjugate to $\tilde{g}$ on $(c(\mu),d(\mu))$.

\myStep{4}{Repeat for the non-monotone case.}
If instead $a_R \in (-1,0)$ we define $s_L(\mu)$, $s_R(\mu)$, and $t(\mu)$ as above.
The result follows from Lemma \ref{le:constructTwoNonMon} for $\mu > 0$,
Lemma \ref{le:muEqualsZero} for $\mu = 0$,
and Lemma \ref{le:constructZeroNonMon} for $\mu < 0$.
In each case \eqref{eq:theBound} can be verified through calculations similar to those given in the previous proof.
\end{proof}

\begin{proof}[Proof of Theorem \ref{th:pdLike}]~

\myStep{1}{Match $g$ to $f$.}
First consider $\mu > 0$.
In this case $f$ has a locally unique fixed point $x_R(\mu)$, given by \eqref{eq:xR},
with stability multiplier $\lambda_R(\mu)$, given by \eqref{eq:stabMultxR},
and a locally unique period-two solution
consisting of points $u_L(\mu) < 0$ and $u_R(\mu) > 0$.
By direct calculations we find that the stability multiplier of the period-two solution is
\begin{equation}
\xi(\mu) = a_L a_R + \left( a_L d_R + a_R d_L + \frac{2 (a_L(1+a_L)c_R + a_R(1+a_R)c_L) \beta}{1 - a_L a_R} \right) \mu
+ \cO \left( \mu^2 \right),
\label{eq:fLRcycleEig}
\end{equation}
where again we write $f_L$ and $f_R$ as \eqref{eq:fLfRtoSecondOrder}.

Define $s_L(\mu)$ and $s_R(\mu)$ by \eqref{eq:sLproof1} and \eqref{eq:sRproof1}
so that the right fixed point of $g$ has the same stability multiplier as $x_R(\mu)$
and $f$ and $g$ have the same ratio of lower and upper derivatives at $0$.
We now work to defining $t(\mu)$ such that $g$ has a period-two solution with stability multiplier $\xi(\mu)$.
The map $\hat{g}(y;\mu,t) = g(y;\nu(\mu),s_L(\mu),s_R(\mu),t)$ has a period-two solution with stability multiplier
\begin{equation}
\hat{\xi}(\mu,t) = a_L a_R + \left( a_L d_R + a_R d_L
+ \frac{4 a_L \beta c_R}{1 - a_R} + \frac{2 a_R (1+a_R) \beta t}{1 - a_L a_R} \right) \mu
+ \cO \left( \mu^2 \right).
\label{eq:gLRcycleEig}
\end{equation}
Notice $\xi(\mu)$ and $\hat{\xi}(\mu,t)$ are $C^2$ because $f$ is piecewise-$C^3$.
So the function
\begin{equation}
U(\mu,t) = \begin{cases}
\frac{1}{\mu} \left( \hat{\xi}(\mu,t) - \xi(\mu) \right), & \mu > 0, \\
\frac{\partial}{\partial \mu} \left( \hat{\xi}(\mu,t) - \xi(\mu) \right), & \mu = 0,
\end{cases}
\label{eq:Kpd}
\end{equation}
is $C^1$; thus by the implicit function theorem there exists a unique $C^1$ function
\begin{equation}
t(\mu) = \beta c_L + \frac{a_L(1+a_L)c_R}{a_R(1+a_R)} - \frac{2(1-a_L a_R) a_L \beta c_R}{a_R(1+a_R)(1-a_R)} + \cO(\mu),
\label{eq:tpd}
\end{equation}
giving $U(\mu,t(\mu)) = 0$ for all $\mu$ in a neighbourhood of $0$.

\myStep{2}{Obtain two differentiable conjugacies for $\mu > 0$.}
We now use Lemma \ref{le:constructLR} to show $f$ is differentiably conjugate to
$\tilde{g}(y;\mu) = g(y;\nu(\mu),s_L(\mu),s_R(\mu),t(\mu))$.
The map $\tilde{g}$ has a locally unique fixed point $y_R(\mu) > 0$ and a locally unique period-two solution comprised of points
$v_L(\mu) < 0$ and $v_R(\mu) > 0$.
By construction the stability multipliers of these
are identical to those of $x_R(\mu)$ and $\{ u_L(\mu), u_R(\mu) \}$.

Fix $\mu_0, p > 0$ and consider $\mu \in (0,\mu_0)$.
We can assume $\mu_0, p > 0$ are small enough that $f$
satisfies the conditions in Lemma \ref{le:constructLR} on $(a,b) = (-p,p)$,
and $\tilde{g}^*$ satisfies the conditions on $g$ in Lemma \ref{le:constructLR} on $\mathbb{R}$,
where $\tilde{g}^*$ is piecewise-$C^2$ and equal to $\tilde{g}$ for all $y \ge -2 p$.
So, by Lemma \ref{le:constructLR},
for all $\mu \in (0,\mu_0)$ there exist $c(\mu) < 0$, $d(\mu) > 0$, and diffeomorphisms
$h_1 : (u_L(\mu),u_R(\mu)) \to (v_L(\mu),v_R(\mu))$ and
$h_2 : (-p,x_R(\mu)) \cup (x_R(\mu),p) \to (c(\mu),y_R(\mu)) \cup (y_R(\mu),d(\mu))$ that conjugate $f$ to $\tilde{g}^*$.
The bound \eqref{eq:theBound}
can be verified as in the proof of Theorem \ref{th:noBif}
but by working with $f^2$ instead of $f$.
In particular this gives $c(\mu) > -2 p$
so $h_1$ and $h_2$ differentiably conjugate $f$ to $\tilde{g}$ as required.

\myStep{3}{Obtain a differentiable conjugacy for $\mu \le 0$.}
For $\mu = 0$ the result follows from Lemma \ref{le:muEqualsZero},
while for $\mu < 0$ the result follows from Lemma \ref{le:constructOneDec}
as in the decreasing case of Theorem \ref{th:noBif}.
Again in each case \eqref{eq:theBound} can be verified in a similar fashion.
\end{proof}

\section{Remarks on multi-dimensional maps}
\label{sec:nd}

Mathematical models are rarely one-dimensional so it is important to understand
how bifurcations behave in multi-dimensional maps.
For classical (smooth) bifurcations this is usually
achieved by using a centre manifold to reduce the dimension of the problem,
e.g.~to one dimension for saddle-node bifurcations.
Such reduction is usually not possible for border-collision bifurcations
because there are no eigenvalues with unit modulus that are needed to form a centre manifold.

For a border-collision bifurcation of a map $f$ on $\mathbb{R}^n$,
one can replace each piece of the map with an affine map,
then to the resulting piecewise-linear approximation perform a change of coordinates
to obtain an instance of the $n$-dimensional border-collision normal form \cite{Si16}.
In two dimensions the normal form is
\begin{equation}
\begin{bmatrix} y_1 \\ y_2 \end{bmatrix} \mapsto
\begin{cases}
\begin{bmatrix} \tau_L y_1 + y_2 + \nu \\ -\delta_L y_1 \end{bmatrix}, & y_1 \le 0, \\[4mm]
\begin{bmatrix} \tau_R y_1 + y_2 + \nu \\ -\delta_R y_1 \end{bmatrix}, & y_1 \ge 0,
\end{cases}
\label{eq:2dbcnf}
\end{equation}
where $\nu \in \mathbb{R}$ is the primary bifurcation parameter
and $\tau_L, \delta_L, \tau_R, \delta_R \in \mathbb{R}$ are additional parameters,
akin to the slopes $s_L$ and $s_R$ in the one-dimensional version \eqref{eq:pwl}.
There have been many studies that describe the dynamics of \eqref{eq:2dbcnf}
\cite{BaGr99,FaSi23,GhMc24,NuYo92,SuGa08,ZhMo06b}, 
but do these dynamics actually occur in the original map $f$?

Broadly speaking, yes, in a neighbourhood of the bifurcation,
but there are exceptions and an general absence of formal results to clarify matters.
In some situations it can be proved that a particular invariant set of \eqref{eq:2dbcnf} must exist also for $f$.
For example if \eqref{eq:2dbcnf} with $\nu > 0$ has a hyperbolic period-$p$ solution with no points on the switching manifold,
then $f$ has a period-$p$ solution on one side of the border-collision bifurcation;
this is a simple consequence of the implicit function theorem \cite{SiMe10}.
If \eqref{eq:2dbcnf} has a trapping region for $\nu > 0$,
then $f$ has a topological attractor on one side of the border-collision bifurcation.
If one can also identify an invariant expanding cone for the evolution of tangent vectors in tangent space,
then this attractor must be chaotic \cite{MiSt18,SiGl24}.

But there do not appear to be any formal results explaining 
when {\em all} of the dynamics of \eqref{eq:2dbcnf} occurs also for $f$,
which is what one gets with a conjugacy.
Such a result would surely be extremely valuable.
To construct a differentiable conjugacy
one could start with a local conjugacy about a fixed point using a
higher dimensional version of Sternberg's linearisation theorem \cite{St58},
then grow the domain of the conjugacy outwards, but it is not immediately clear how to
grow the domain through the switching manifold
and whether or not reasonable constraints exist that would make this possible.

We should also bear in mind that from an applied perspective the definition
of a conjugacy may not be appropriate for piecewise-smooth systems.
As explained by di Bernardo {\em et al.}~\cite{DiBu08},
the different pieces of a piecewise-smooth map usually have distinct physical interpretations,
thus the conjugacy should preserve the switching manifold if these interpretations are to be maintained.
This is not a requirement in Definition \ref{df:conj},
nevertheless all the differentiable conjugacies we have constructed above do preserve the switching point,
i.e.~$h(0) = 0$.

\section{Discussion}
\label{sec:conc}

Presently there is no strict normal form theory for border-collision bifurcations.
This paper provides a first step to such a theory
by obtaining differentiable conjugacies in the one-dimensional case.
A generalisation to higher dimensions will likely be difficult
because unlike classical (smooth) bifurcations the dynamical complexity increases with dimension \cite{GlJe15}.
Useful progress may be possible in two dimensions
when one piece of the map has a one-dimensional range.
Such border-collision bifurcations are common in applications \cite{Ko05,RoHi19b,SzOs09}
where the degenerate range is due for example to
{\em sliding motion} whereby trajectories become constrained to a codimension-one surface \cite{DiKo03}.

Our results have shown that to obtain a differentiable conjugacy
the standard piecewise-linear approximation is often insufficient,
but can often be remedied by adding a quadratic term to produce an extended normal form.
The coefficients in the extended normal form
are functions of the parameter $\mu$ that controls the border-collision bifurcation.
This is another departure from the standard approach in which the coefficients in the
piecewise-linear approximation are treated as constants.

If bifurcation theory is based on topological conjugacy,
the standard approach whereby the slopes of the normal form
are matched to those of the original map is perfunctory.
Any choice of slopes in appropriate intervals will suffice.
But by choosing the slopes in the standard way,
the normal form is a truncation of the differentiably conjugate extended normal form
explaining why this choice is privileged.

It remains to study the smoothness of the conjugating function $h$ with respect to $\mu$.
Sell \cite{Se85} has shown that for Sternberg linearisation $h$
can be made to vary smoothly with respect to parameters,
and we expect it is possible to obtain a $C^1$ dependence on $\mu$ for our results.
It also remains to accommodate border-collision bifurcations that generate chaotic dynamics.
Here differentiable conjugacies are presumably too restrictive to produce useful results
because it would be necessary to match the stability multipliers of all periodic solutions,
so topological conjugacy is likely to be the most appropriate form of equivalence.

\section*{Acknowledgements}

This work was supported by Marsden Fund contract MAU2209 managed by Royal Society Te Ap\={a}rangi.

\appendix

\section{Proof of Lemma \ref{le:bound}}
\label{app:proof}
\setcounter{equation}{0}

\begin{lemma}
Let $f : (a,b) \to \mathbb{R}$ be $C^2$ with $f'(x) > 0$ for all $x \in (a,b)$.
Suppose $0 \in (a,b)$ is the unique fixed point of $f$ with $f'(0) = \lambda \in (0,1)$.
Let $K > 0$ be such that $|f''(x)| \le K$ for all $x \in (a,b)$.
Let $r = \frac{\lambda(1-\lambda)}{10 K}$ and suppose ${\rm max}(|a|,b) \le r$.
\begin{enumerate}
\item[a)]
For any $u \in (a,b)$ and $n \ge 0$,
\begin{equation}
\left| f^n(u) - \lambda^n u \right| \le \frac{\left( 1 - \lambda^n \right) \lambda^n}{5 r} \,u^2.
\label{eq:fnBound}
\end{equation}
\item[b)]
For any $v \in (a,b)$ and $n \ge 0$,
if $f^{-n}(v) \in (a,b)$ then 
\begin{equation}
\left| f^{-n}(v) - \frac{v}{\lambda^n} \right| \le \frac{2 \left( 1 - \lambda^n \right)}{5 \lambda^{2 n} r} \,v^2.
\label{eq:fInvnBound}
\end{equation}
\end{enumerate}
\label{le:fn}
\end{lemma}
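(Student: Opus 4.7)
The plan is to prove part (a) by induction on $n$, then derive part (b) as a direct consequence by applying (a) at the point $u=f^{-n}(v)$. The key ingredient in both parts is Taylor's theorem around the fixed point $0$: since $f(0)=0$ and $f'(0)=\lambda$, one has $|f(x)-\lambda x|\le\tfrac{K}{2}x^2$ for all $x\in(a,b)$.

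For part (a), the base case $n=0$ is trivial, and $n=1$ is immediate from Taylor's bound: the choice $r=\lambda(1-\lambda)/(10K)$ makes $\tfrac{K}{2}=\tfrac{\lambda(1-\lambda)}{20r}$, which sits comfortably below the target $\tfrac{(1-\lambda)\lambda}{5r}=2K$. For the inductive step I would write $f^n(u)=\lambda^n u+\epsilon_n$ with $|\epsilon_n|$ bounded by the hypothesis, and Taylor-expand $f^{n+1}(u)=f(f^n(u))=\lambda f^n(u)+\tfrac{1}{2}f''(\xi)(f^n(u))^2$ to obtain
\[
|f^{n+1}(u)-\lambda^{n+1}u|\le\lambda|\epsilon_n|+\tfrac{K}{2}(f^n(u))^2.
\]
The first term inherits the induction hypothesis and contributes $\tfrac{\lambda^{n+1}(1-\lambda^n)}{5r}u^2$. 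For the second, the crude estimate $|f^n(u)|\le\lambda^n|u|+|\epsilon_n|\le\tfrac{6}{5}\lambda^n|u|$ (using $|u|\le r$ and $1-\lambda^n\le 1$) gives $(f^n(u))^2\le\tfrac{36}{25}\lambda^{2n}u^2$. Substituting $K=\lambda(1-\lambda)/(10r)$ and comparing with the target bound expressed as $\tfrac{\lambda^{n+1}}{5r}[(1-\lambda^n)+\lambda^n(1-\lambda)]u^2$, the step closes provided $\tfrac{9}{25}\lambda^n(1-\lambda)\le\lambda^n(1-\lambda)$, which is obvious.

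For part (b), applying (a) with $u=f^{-n}(v)\in(a,b)$ yields $|v-\lambda^n f^{-n}(v)|\le\tfrac{(1-\lambda^n)\lambda^n}{5r}(f^{-n}(v))^2$, and dividing by $\lambda^n$ gives
\[
|f^{-n}(v)-v/\lambda^n|\le\tfrac{1-\lambda^n}{5r}(f^{-n}(v))^2.
\]
The remaining task is to control $(f^{-n}(v))^2$ by $v^2$. Using the a priori bound $|f^{-n}(v)|\le r$, the right-hand side is at most $\tfrac{1-\lambda^n}{5}|f^{-n}(v)|$, so rearranging gives $|f^{-n}(v)|\le\tfrac{5}{4+\lambda^n}\cdot|v|/\lambda^n\le\tfrac{5}{4}|v|/\lambda^n$. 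Squaring, substituting back, and noting $\tfrac{25}{16}\cdot\tfrac{1}{5}=\tfrac{5}{16}\le\tfrac{2}{5}$ yields the claimed inequality.

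The main obstacle is purely arithmetic bookkeeping: the precise value of $r$ and the coefficient $\tfrac{1}{5r}$ in the statement are essentially forced by the numerics, since the induction in (a) closes only because $\tfrac{9}{25}<1$ and the bootstrap in (b) works only because $\tfrac{5}{16}<\tfrac{2}{5}$. No step is conceptually deep, but any loosening of an intermediate estimate would break the argument, so one has to carry the constants through attentively.
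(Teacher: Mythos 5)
Your proposal is correct. Part (a) is essentially the paper's argument: the same Taylor estimate $|f(x)-\lambda x|\le \tfrac{K}{2}x^2$ at the fixed point, the same induction, and the same intermediate bound $|f^n(u)|\le\tfrac{6}{5}\lambda^n|u|$ (the paper then relaxes this to $2\lambda^n|u|$ so that the constants close with exact equality, whereas you keep $\tfrac{6}{5}$ and close with room to spare --- either works). Part (b), however, is a genuinely different route. The paper proves (b) by a second induction: it first derives a Taylor-type bound for the inverse map, $\left|f^{-1}(y)-\tfrac{y}{\lambda}\right|\le\tfrac{K y^2}{\lambda^3}$, by estimating $(f^{-1})''$ through $|f'(x)|\ge\lambda-Kr>\tfrac{9\lambda}{10}$, and then runs the same inductive scheme backwards, using (a) only to control $|v|$ along the way. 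You instead obtain (b) directly from (a) applied at $u=f^{-n}(v)$ (legitimate, since the hypothesis puts this point in $(a,b)$), divide by $\lambda^n$, and then bootstrap: $u^2\le r|u|$ turns the error term into $\tfrac{1-\lambda^n}{5}|u|$, which rearranges to $|f^{-n}(v)|\le\tfrac{5}{4}\,|v|/\lambda^n$, and squaring gives the claim with constant $\tfrac{5}{16}<\tfrac{2}{5}$. Your version avoids differentiating the inverse and avoids the second induction, is shorter, and in fact yields a slightly sharper constant; the paper's version keeps a symmetric forward/backward structure (mirroring its remark that the case $\lambda>1$ is handled via second-derivative bounds on the inverses), but for this lemma your derivation of (b) is the more economical one. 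Both arguments rest on the same forward-invariance of $(a,b)$ (no fixed points away from $0$, $f$ increasing), which you, like the paper, use implicitly; it would be worth one sentence to make it explicit.
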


\begin{proof}
The bound $|f''(x)| \le K$ implies
\begin{equation}
|f(x) - \lambda x| \le \frac{K x^2}{2},
\label{eq:fnProof-fBound}
\end{equation}
for any $x \in (a,b)$.
We now apply the same calculation to $f^{-1}$.
We have $\left| \left( f^{-1} \right)''(y) \right|
= \left| \frac{f''(x)}{\left( f'(x) \right)^3} \right|
\le \frac{K}{\left| f'(x) \right|^3}$, where $y = f(x)$.
But $\left| f'(x) \right| \ge \lambda - K |x| \ge \lambda - K r > \frac{9 \lambda}{10} > 2^{-\frac{1}{3}} \lambda$,
so $\left| \left( f^{-1} \right)''(y) \right| \le \frac{2 K}{\lambda^3}$, and hence
\begin{equation}
\left| f^{-1}(y) - \frac{y}{\lambda} \right| \le \frac{K y^2}{\lambda^3},
\label{eq:fnProof-fInvBound}
\end{equation}
for any $y \in \mathbb{R}$ for which $f^{-1}(y) \in (a,b)$.

We now verify \eqref{eq:fnBound} by induction on $n$.
Equation \eqref{eq:fnBound} holds with $n = 0$ because in this case both sides are zero.
Suppose \eqref{eq:fnBound} holds for some $n \ge 0$.
This implies
\begin{equation}
\left| f^n(u) \right| \le \lambda^n |u| + \frac{\lambda^n u^2}{5 r} \le \frac{6 \lambda^n |u|}{5} \le 2 \lambda^n |u|,
\label{eq:fnProof-10}
\end{equation}
using $|u| \le r$ for the middle inequality.
Then by \eqref{eq:fnProof-fBound}, \eqref{eq:fnProof-10}, and the induction hypothesis,
\begin{align}
\left| f^{n+1}(u) - \lambda^{n+1} u \right|
&\le \left| f \left( f^n(u) \right) - \lambda f^n(u) \right| + \lambda \left| f^n(u) - \lambda^n u \right| \nonumber \\
&\le \frac{K}{2} \big( 2 \lambda^n |u| \big)^2 + \lambda \,\frac{\left( 1 - \lambda^n \right) \lambda^n}{5 r} \,u^2 \nonumber \\
&= \frac{\left( 1 - \lambda^{n+1} \right) \lambda^{n+1}}{5 r} \,u^2, \nonumber
\end{align}
where we have substituted $K = \frac{\lambda(1-\lambda)}{10 r}$ to produce the last line.
This completes the proof of \eqref{eq:fnBound} by induction.

Next we verify \eqref{eq:fInvnBound} in a similar fashion.
Equation \eqref{eq:fInvnBound} holds with $n = 0$ because in this case both sides are zero.
Suppose \eqref{eq:fInvnBound} holds and $f^{-(n+1)}(v) \in (a,b)$ for some $n \ge 0$.
By \eqref{eq:fnBound} (applied to $u = f^{-(n+1)}(v)$ and with $n+1$ in place of $n$),
\begin{equation}
|v| \le \lambda^{n+1} r + \frac{\lambda^{n+1}}{5 r} \,r^2 = \frac{6 \lambda^{n+1} r}{5}.
\label{eq:fnProof-29}
\end{equation}
The induction hypothesis gives
\begin{equation}
\left| f^{-n}(v) \right|
\le \frac{|v|}{\lambda^n} + \frac{2 v^2}{5 \lambda^{2 n} r}
\le \left( 1 + \frac{12 \lambda}{25} \right) \frac{|v|}{\lambda^n}
\le \frac{2 |v|}{\lambda^n},
\label{eq:fnProof-30}
\end{equation}
where we have used \eqref{eq:fnProof-29} for the middle inequality.
Then by \eqref{eq:fnProof-fInvBound}, \eqref{eq:fnProof-30}, and the induction hypothesis,
\begin{align}
\left| f^{-(n+1)}(v) - \frac{v}{\lambda^{n+1}} \right|
&\le \left| f^{-1} \left( f^{-n}(v) \right) - \frac{f^{-n}(v)}{\lambda} \right|
+ \frac{1}{\lambda} \left| f^{-n}(v) - \frac{v}{\lambda^n} \right| \nonumber \\
&\le \frac{K}{\lambda^3} \left( \frac{2 |v|}{\lambda^n} \right)^2 + \frac{1}{\lambda} \,\frac{2 \left( 1 - \lambda^n \right)}{5 \lambda^{2 n} r} \,v^2 \nonumber \\
&= \frac{2 \left( 1 - \lambda^{n+1} \right)}{5 \lambda^{2(n+1)} r} \,v^2, \nonumber
\end{align}
where we have substituted $K = \frac{\lambda(1-\lambda)}{10 r}$ to produce the last line.
This completes the proof of \eqref{eq:fInvnBound} by induction.
\end{proof}

\begin{proof}[Proof of Lemma \ref{le:bound}]
Without loss of generality suppose $x^* = 0$ and $y^* = 0$.
By Lemma \ref{le:fn}a applied to the map $f$ and the point $u = b$,
\begin{equation}
\left| f^n(b) - \lambda^n b \right| \le \frac{\lambda^n b^2}{5 r}.
\label{eq:boundProof-10}
\end{equation}
This implies
\begin{equation}
\left| f^n(b) \right| \ge \lambda^n b - \frac{\lambda^n b^2}{5 r} \ge \frac{4 \lambda^n b}{5},
\label{eq:boundProof-11}
\end{equation}
using the assumption $b \le r$.
Similarly by Lemma \ref{le:fn}a applied to the map $g$ and the point $u = d$,
\begin{equation}
\left| g^n(d) - \lambda^n d \right| \le \frac{\lambda^n d^2}{5 r}.
\label{eq:boundProof-20}
\end{equation}
We now show the value of $\frac{g^n(d)}{f^n(b)}$ is close to $\chi$,
where $\chi = \frac{d}{b}$ because $x^* = 0$ and $y^* = 0$.
Specifically, by \eqref{eq:boundProof-10}, \eqref{eq:boundProof-11}, and \eqref{eq:boundProof-20},
\begin{align}
\left| \frac{g^n(d)}{f^n(b)} - \chi \right|
&= \frac{1}{f^n(b)} \left| g^n(d) - \tfrac{d}{b} f^n(b) \right| \nonumber \\
&\le \frac{1}{f^n(b)} \left( \left| g^n(d) - \lambda^n d \right|
+ \tfrac{d}{b} \left| f^n(b) - \lambda^n b \right| \right) \nonumber \\
&\le \frac{5}{4 \lambda^n b} \left( \frac{\lambda^n d^2}{5 r} + \frac{\lambda^n b d}{5 r} \right) \nonumber \\
&= \frac{\chi (b + d)}{4 r}. \nonumber
\end{align}
But $\chi \le \frac{3}{2}$, so
\begin{equation}
\left| \frac{g^n(d)}{f^n(b)} - \chi \right| \le \frac{\frac{3}{2} \left( 1 + \frac{3}{2} \right) b}{4 r} = \frac{15 b}{16 r} < \frac{b}{r}.
\label{eq:boundProof-30}
\end{equation}

Now fix $x \in (a,b) \setminus \{ 0 \}$.
Since $f^n(x) \to 0$ as $n \to \infty$,
we have $\frac{h(f^n(x))}{f^n(x)} \to h'(0)$ as $n \to \infty$.
But $h \circ f^n = g^n \circ h$,
so $\frac{g^n(h(x))}{f^n(x)} \to h'(0)$ as $n \to \infty$.
Also with $b$ and $d$ in place of $x$ and $h(x)$,
we have $\frac{g^n(d)}{f^n(b)} \to h'(0)$ as $n \to \infty$.
So there exists $n \in \mathbb{Z}$ such that
\begin{equation}
\left| \frac{g^n(h(x))}{f^n(x)} - \frac{g^n(d)}{f^n(b)} \right| \le \frac{b}{2 r}.
\nonumber
\end{equation}
Combining this with \eqref{eq:boundProof-30} gives
\begin{equation}
\left| \frac{g^n(h(x))}{f^n(x)} - \chi \right| \le \frac{3 b}{2 r}.
\label{eq:boundProof-41}
\end{equation}
Also $\chi < \frac{3}{2}$ so
\begin{equation}
\left| \frac{g^n(h(x))}{f^n(x)} \right| < \frac{3}{2} + \frac{3 b}{2 r} \le \frac{3}{2} + \frac{3}{2} = 3.
\label{eq:boundProof-42}
\end{equation}
By Lemma \ref{le:fn}a applied to the map $f$ and the point $u = x$,
\begin{equation}
\left| f^n(x) - \lambda^n x \right| \le \frac{\lambda^n x^2}{5 r}.
\label{eq:boundProof-50}
\end{equation}
Then by \eqref{eq:boundProof-41}, \eqref{eq:boundProof-42}, and \eqref{eq:boundProof-50},
\begin{align}
\left| \frac{g^n(h(x))}{\lambda^n x} - \chi \right|
&= \left| \frac{g^n(h(x))}{\lambda^n x f^n(x)} \left( f^n(x) - \lambda^n x \right) + \frac{g^n(h(x))}{f^n(x)} - \chi \right| \nonumber \\
&\le \frac{g^n(h(x))}{\lambda^n |x| f^n(x)} \left| f^n(x) - \lambda^n x \right|
+ \left| \frac{g^n(h(x))}{f^n(x)} - \chi \right| \nonumber \\
&\le \frac{3}{\lambda^n |x|} \frac{\lambda^n x^2}{5 r} + \frac{3 b}{2 r} \nonumber \\
&= \frac{3 |x|}{5 r} + \frac{3 b}{2 r}.
\label{eq:boundProof-53}
\end{align}
Also \eqref{eq:boundProof-50} implies
\begin{equation}
\left| f^n(x) \right| \le \lambda^n |x| + \frac{\lambda^n x^2}{5 r} \le \frac{6 \lambda^n |x|}{5},
\nonumber
\end{equation}
using $|x| < r$, so by \eqref{eq:boundProof-42},
\begin{equation}
\left| g^n(h(x)) \right| \le \frac{18 \lambda^n |x|}{5}.
\label{eq:boundProof-56}
\end{equation}
By Lemma \ref{le:fn}b applied to the map $g$ and the point $v = g^n(h(x))$,
\begin{equation}
\left| h(x) - \frac{g^n(h(x))}{\lambda^n} \right| \le \frac{2}{5 \lambda^{2n} r} \big( g^n(h(x)) \big)^2
\le \frac{2 (18)^2 x^2}{5 (5)^2 r} < \frac{27 x^2}{5 r},
\nonumber
\end{equation}
where we have used \eqref{eq:boundProof-56} for the middle inequality.
By dividing by $x$ and combining with \eqref{eq:boundProof-53} we obtain
\begin{equation}
\left| \frac{h(x)}{x} - \chi \right| < \frac{6 |x|}{r} + \frac{3 b}{2 r},
\nonumber
\end{equation}
which is the desired bound \eqref{eq:bound}.
\end{proof}


\end{document}